\newif\ifps
\newtheorem{thm}{Theorem}[section]
\newtheorem{cor}[thm]{Corollary}
\newtheorem{lem}[thm]{Lemma}
\newtheorem{defn}[thm]{Definition}
\newtheorem{thma}{Theorem}
\def\epsilon{\varepsilon}
\newcommand{\occult}[1]{}
\newcommand{\old}[1]{}
\newcommand{\htop}{h_\mathrm{top}}
\newcommand{\eps}{\varepsilon}
\newcommand{\ph}{\varphi}
\newcommand\diam{{\operatorname{diam}}}
\newcommand\RR{{\mathbb R}}
\newcommand\TT{{\mathbb T}}
\newcommand\ZZ{{\mathbb Z}}
\newcommand\NN{{\mathbb N}}
\newcommand{\second}{r}
\newcommand{\Lspan}{\Lambda^\mathrm{span}}
\newcommand{\Lsep}{\Lambda^\mathrm{sep}}
\newcommand{\CCC}{\mathcal{C}}
\newcommand{\GGG}{\mathcal{G}}
\newcommand{\DDD}{\mathcal{D}}
\newcommand{\FFF}{\mathcal{F}}
\newcommand{\SSS}{\mathcal{S}}
\newcommand{\PPP}{\mathcal{P}}
\newcommand{\UUU}{\mathcal{U}}
\newcommand{\Pexp}{P_\mathrm{exp}^\perp}
\newcommand{\Pbad}{\Psi}
\newcommand{\bk}{\bar\kappa}
\newcommand{\phigeo}{\varphi^u}
\DeclareMathOperator{\Diff}{Diff}
\DeclareMathOperator{\Var}{Var}
\numberwithin{equation}{section}
\begin{document}

\title{Equilibrium states for Ma\~n\'e diffeomorphisms}

\author{Vaughn Climenhaga}
\author{Todd Fisher}
\author{Daniel J. Thompson}

\address{Department of Mathematics, University of Houston, Houston, Texas 77204.}
\email{climenha@math.uh.edu}

\address{Department of Mathematics, Brigham Young University, Provo, UT 84602}
\email{tfisher@mathematics.byu.edu}

\address{Department of Mathematics, The Ohio State University, 100 Math Tower, 231 West 18th Avenue, Columbus, Ohio 43210.}
\email{thompson@math.osu.edu}

\date{\today}


\thanks{V.C.\ is supported by NSF grants DMS-1362838 and 
DMS-1554794.  T.F.\ is supported by Simons Foundation grant \# 239708. D.T.\ is supported by NSF grant DMS-$1461163$}


\begin{abstract}
We study thermodynamic formalism for the family of robustly transitive diffeomorphisms introduced by Ma\~n\'e, establishing existence and uniqueness of equilibrium states for natural classes of potential functions. In particular, 
we characterize the SRB measures for these  diffeomorphisms as unique equilibrium states for a suitable geometric potential.  We also obtain large deviations and multifractal results for the unique equilibrium states produced by the main theorem.
 \end{abstract}

\maketitle
\normalem

\section{Introduction}

Let $f\colon M\to M$ be a diffeomorphism of a compact smooth manifold.   Among the invariant probability measures for the system, thermodynamic formalism identifies distinguished measures called \emph{equilibrium states}; these are measures that maximize the quantity $h_\mu(f) + \int \ph\,d\mu$, where $\ph\colon M\to \RR$ is a \emph{potential function}.

Sinai, Ruelle, and Bowen \cite{jS72,rB75,dR76} showed that a mixing Anosov diffeomorphism has a unique equilibrium state $\mu_\ph$ for every H\"older continuous potential $\ph$, and that for the \emph{geometric potential} $\phigeo(x) = -\log|\det Df|_{E^u(x)}|$, this unique equilibrium state  is the \emph{SRB measure}, which is the physically relevant invariant measure. The extension of this theory to systems beyond uniform hyperbolicity has generated a great deal of activity \cite{DU,lY98,oS01,BT09,ST16,PSZ}. In this paper, we study a class of derived from Anosov (DA) partially hyperbolic diffeomorphisms using theory developed by the first and third authors in \cite{CT4}. The results from \cite{CT4} show that equilibrium states exist and are unique under the hypotheses that  `obstructions to the specification property and regularity' and `obstructions to expansivity' carry less topological pressure than the whole space.  

We consider the class of diffeomorphisms introduced by Ma\~n\'e~\cite{man78}, which are partially hyperbolic maps $f_M\colon \TT^d\to \TT^d$ ($d\geq 3$) constructed as $C^0$-perturbations of a hyperbolic toral automorphism $f_A$ with 1-dimensional unstable bundle.   
These maps are robustly transitive but not Anosov; they admit an invariant splitting $T\TT^d = E^u \oplus E^c \oplus E^s$ where vectors in $E^c$ are sometimes expanded and sometimes contracted.  We give explicit criteria under which $f_M$, or a $C^1$-perturbation, has a unique equilibrium state for a H\"older continuous potential $\ph\colon \TT^d\to\RR$.  

In \cite{CFT_BV}, we gave analogous results for the Bonatti--Viana family of diffeomorphisms, which admit a dominated splitting but are not partially hyperbolic. 
Our results here for the Ma\~n\'e family  are stronger; in particular, the fact that the unstable bundle is 1-dimensional and uniformly expanding allows us to work at arbitrarily small scales and obtain large deviations and multifractal results not present in \cite{CFT_BV}.  Our proofs, which rely on general pressure estimates for DA diffeomorphisms from \cite{CFT_BV}, are correspondingly simpler. An additional novelty in this paper is that we apply our results to a larger class of H\"older potential functions by giving a criteria for uniqueness involving the H\"older semi-norm. We emphasize that although we choose to focus on the Ma\~n\'e class since it is a model class of partially hyperbolic examples in the literature, our approach to existence and uniqueness applies more generally and does not rely on 1-dimensionality of the unstable manifold in an essential way, as is made clear in \cite{CFT_BV}.

We need to control two parameters $\rho,\second>0$ in Ma\~n\'e's construction. We write $\FFF_{\rho,\second}$ for the set of Ma\~n\'e diffeomorphisms $f_M$ such that
\begin{enumerate}[label=(\roman{*})]
\item $f_A$ has a fixed point $q$ such that $f_M = f_A$ on $\TT^d \setminus B(q,\rho)$, and
\item 
\label{gamma}
if an orbit spends a proportion at least $\second$ of its time outside $B(q,\rho)$, then it contracts vectors in $E^c$.
\end{enumerate}
The parameter $\second$ has a more intrinsic definition as an upper bound on a quantity involving the maximum derivative of $f_M$ on $E^c$, and the construction can be carried out so $r$ is arbitrarily small.
Our results apply to $C^1$ perturbations of $f_M$ that are partially hyperbolic, dynamically coherent, and satisfy \ref{gamma}. We denote this $C^1$-open set by $\UUU_{\rho,\second}$. We describe the Ma\~n\'e construction and $\UUU_{\rho,\second}$ more precisely in \S\ref{s.constructions}. 

We now state our main theorem. 
Here  $h$ is the topological entropy of $f_A$, $L$ is a constant defined in \S \ref{pressureestimate} depending on $f_A$ and the maximum of $d_{C^0}(g, f_A)$ for $g \in \UUU_{\rho,\second}$, and
$H(r) = -r \log r -(1-r) \log (1-r)$.

\begin{thma}\label{t.mane} 
Given $g\in \UUU_{\rho,\second}$ and $\varphi\colon \mathbb{T}^d\to \mathbb{R}$ H\"older continuous,
let
\[
\Psi(\rho,\second,\ph) = (1-r)\sup_{B(q,\rho)} \ph + r(\sup_{\TT^d} \ph + h + \log L) + H(2r).
\]
If $\Psi(\rho,r,\varphi)< P(\varphi; g)$, then $(\TT^d,g,\varphi)$ has a unique equilibrium state. 
\end{thma}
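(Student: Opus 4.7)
My plan is to apply the abstract criterion of Climenhaga--Thompson~\cite{CT4}, which gives uniqueness of equilibrium states whenever one can exhibit a decomposition of the space of orbit segments $(x,n)$ into a prefix--good--suffix form $\PPP = \CCC^p \cdot \GGG \cdot \CCC^s$ such that (a) the good collection $\GGG$ satisfies the specification property and the Bowen property for $\varphi$, (b) the obstructions (orbit segments that cannot be placed in $\GGG$, plus prefixes/suffixes lying in $\CCC^p, \CCC^s$) carry pressure strictly less than $P(\varphi;g)$, and (c) the pressure of non-expansive points is controlled. My job is thus to construct a decomposition reflecting condition~\ref{gamma} and estimate its combinatorial cost to produce exactly the bound $\Psi(\rho,r,\varphi)$.

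For the decomposition, I would declare $(x,n)\in\GGG$ precisely when the proportion of iterates $0\le k < n$ with $g^k(x)\notin B(q,\rho)$ is at least $r$. By hypothesis~\ref{gamma}, such segments contract vectors in $E^c$; combined with uniform contraction on $E^s$ and uniform expansion on $E^u$, segments in $\GGG$ behave like segments of an Anosov diffeomorphism at a scale depending on $\rho$. The specification and Bowen properties for $\GGG$ then follow from the general arguments developed in \cite{CFT_BV}, where the same hyperbolic-on-average behavior was harnessed for DA systems without 1-dimensional unstable bundle; the 1-dimensionality of $E^u$ here only simplifies matters and lets me work at arbitrarily small scale. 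The decomposition sends $(x,n)$ to the triple where the prefix and suffix are chopped off so that the proportion of time spent outside $B(q,\rho)$ rises above $r$; an orbit segment fails to be decomposable only if even after any such trimming the orbit spends more than proportion $1-r$ in $B(q,\rho)$, and the prefix/suffix pieces have length at most $rn$ each.

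For the pressure estimate on the obstruction collection (non-decomposable segments together with the long prefixes/suffixes), I would count separated orbit segments by their itineraries. An obstruction orbit segment of length $n$ has at most $2rn$ iterates outside $B(q,\rho)$ and at least $(1-2r)n$ iterates inside; choosing which iterates lie outside contributes a combinatorial factor $\binom{n}{2rn}\le e^{nH(2r)}$. The pieces of the orbit that lie in $B(q,\rho)$ contribute potential at most $\sup_{B(q,\rho)}\varphi$ per step (times $(1-r)n$ after absorbing the small $r$-correction), while the at-most-$rn$ iterates outside contribute at most $r \sup_{\TT^d}\varphi$. The general pressure estimate from \cite{CFT_BV} (this is the role of the constant $L$) bounds the exponential growth of the number of $(n,\epsilon)$-separated orbit segments compatible with a prescribed itinerary by $e^{rn(h+\log L)}$, since only the $\le rn$ iterates outside the ball are free to wander. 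Adding these contributions yields exactly $\Psi(\rho,r,\varphi)$, so the hypothesis $\Psi<P(\varphi;g)$ delivers the required pressure gap.

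The remaining ingredient is the pressure of non-expansive points, but since $g$ is partially hyperbolic with a 1-dimensional uniformly expanding bundle $E^u$ (and uniformly contracting $E^s$), one has entropy expansivity at any scale smaller than the local product structure, so the pressure of non-expansive points is $-\infty$; this is where the 1-dimensionality of the unstable bundle makes our arguments cleaner than in \cite{CFT_BV}. With all three hypotheses of \cite{CT4} verified, uniqueness of the equilibrium state follows. I expect the main technical obstacle to be the combinatorial/potential accounting in the obstruction estimate: the counting of itineraries must be matched carefully against the correct scale $\epsilon$ at which the general pressure estimate from \cite{CFT_BV} applies, and the factor $H(2r)$ (rather than $H(r)$) arises only because one must allow both a prefix and a suffix in the decomposition.
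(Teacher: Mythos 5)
Your overall strategy is the paper's: apply Theorem \ref{t.generalM} with a decomposition built from the proportion of time spent outside $B(q,\rho)$, verify specification and the Bowen property on the good collection, and quote the counting estimate of \cite{CFT_BV} (Theorem \ref{coreestimateallscales}) to bound the pressure of the bad collection by $\Psi(\rho,r,\ph)$. But two steps as you state them would fail. First, and most seriously, your treatment of the expansivity obstruction is wrong: you claim that partial hyperbolicity with one-dimensional $E^u$ makes the pressure of non-expansive points $-\infty$. The Ma\~n\'e map is genuinely non-expansive --- the arc of $W^c(q)$ between the two fixed points created by the pitchfork bifurcation lies in a single bi-infinite Bowen ball, so $\mathrm{NE}(\eps)$ is nonempty and even carries ergodic measures (e.g.\ $\delta_q$), whence $\Pexp(\ph)\geq \ph(q)>-\infty$. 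What one-dimensionality of $E^c$ actually gives is \emph{entropy}-expansivity, $h_g^*(6\eta)=0$ (Lemma \ref{hexp-mane}), which is used to kill the tail-entropy term in the pressure bound, not to kill $\Pexp$. The hypothesis $\Pexp(\ph)<P(\ph;g)$ must instead be verified by showing (via a Pliss-lemma argument, Lemma \ref{satisfieshyp}) that any point whose orbit recurrently spends proportion at least $r$ of its time outside $B(q,\rho)$ is expansive, so that every ergodic measure charging $\mathrm{NE}(\eps)$ concentrates on orbits that are eventually ``mostly inside the ball'' and hence has free energy at most $P(\PPP,\ph)\leq\Psi(\rho,r,\ph)$ (Theorem \ref{expansivityestimate}). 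This ingredient is missing from your argument and cannot be replaced by the $-\infty$ claim.

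Second, your good collection is too large: you put $(x,n)\in\GGG$ when the proportion of iterates outside $B(q,\rho)$ over the whole segment is at least $r$, but the Bowen property and the specification gluing both require \emph{uniform} contraction of $E^{cs}$ at every intermediate time, which needs $S_i\chi_q(x)\geq ri$ for \emph{all} $0\leq i\leq n$ (as in Lemma \ref{centrestable}); a segment that spends its first half inside the ball and its second half outside satisfies your condition but expands $E^c$ during the first half, destroying the estimate $d(g^kx,g^ky)\lesssim \theta_r^k+\theta_u^{n-k}$ on Bowen balls and the control of $W^{cs}_\delta$ under shadowing. With the correct definition the decomposition takes the maximal bad prefix and a trivial suffix; in particular your claim that the prefix and suffix have length at most $rn$ is false and unnecessary --- the prefix collection $\PPP$ consists of segments of arbitrary length with $S_n\chi_q<rn$, and it is precisely this collection whose pressure Theorem \ref{coreestimateallscales} bounds (the factor $H(2r)$ comes from that counting argument, not from allowing both a prefix and a suffix). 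These are fixable within your framework, but as written the specification/Bowen step and the expansivity step do not go through.
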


We then apply Theorem \ref{t.mane} by finding sufficient conditions to verify the inequality $\Psi(\rho,r,\varphi)< P(\varphi; g)$. In \S\ref{cor.br}, we show that for a fixed diffeomorphism in $\UUU_{\rho,\second}$, every H\"older potential satisfying a bounded range condition has a unique equilibrium state. 

In \S \ref{sec:corollaries}, we obtain estimates on $\Psi(\rho,r,\varphi)$ and $P(\varphi; g)$ in terms of the H\"older semi-norm $|\ph|_\alpha$ of the potential. 
We apply these estimates to obtain the following theorem, which says that the set of potentials for which Theorem \ref{t.mane} applies for $g\in \UUU_{\rho,\second}$ contains a ball around the origin in $C^\alpha(\TT^d)$ whose radius goes to $\infty$ as $\rho,\second\to0$.


\begin{thma}\label{cor1.2}
There is a function $T(\rho,\second; \alpha)$ 
with the property that 
\begin{enumerate}[label=\textup{(\arabic{*})}]
\item if $g\in \UUU_{\rho,\second}$ and $|\ph|_\alpha < T(\rho,\second; \alpha)$, then $(\TT^d,g,\ph)$ has a unique equilibrium state; and
\item $T(\rho,\second; \alpha)\to\infty$ as $\rho,\second\to0$.
\end{enumerate}
\end{thma}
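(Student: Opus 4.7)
The plan is to derive Theorem~\ref{cor1.2} from Theorem~\ref{t.mane} by producing quantitative estimates on $\Psi(\rho,r,\ph)$ and $P(\ph;g)$ in terms of $|\ph|_\alpha$, and then solving for the threshold at which $\Psi < P$ holds. A first observation is translation-equivariance: the coefficients $(1-r)$ and $r$ in the definition of $\Psi$ sum to $1$, so $\Psi(\rho,r,\ph+c) = \Psi(\rho,r,\ph)+c$, and likewise $P(\ph+c;g) = P(\ph;g)+c$. Since $|\ph|_\alpha$ is translation-invariant, we may normalize so that $\ph(q) = 0$; H\"older continuity then gives the pointwise bound $|\ph(x)| \le |\ph|_\alpha\, d(x,q)^\alpha$.

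The upper bound on $\Psi$ follows immediately. With $D = \diam(\TT^d)$ we have $\sup_{B(q,\rho)} \ph \le |\ph|_\alpha \rho^\alpha$ and $\sup_{\TT^d} \ph \le |\ph|_\alpha D^\alpha$, hence
\[
\Psi(\rho,r,\ph) \le |\ph|_\alpha\bigl[(1-r)\rho^\alpha + rD^\alpha\bigr] + r(h+\log L) + H(2r).
\]
For the lower bound on $P$, one invokes the variational principle with an invariant measure $\mu_0$ of large entropy; using that $\htop(g) \ge h$ for $g \in \UUU_{\rho,r}$ (inherited from $f_A$ via the Ma\~n\'e semi-conjugacy structure), combined with $\int \ph\, d\mu_0 \ge -|\ph|_\alpha \int d(x,q)^\alpha\, d\mu_0$, one obtains a bound of the form $P(\ph;g) \ge h - |\ph|_\alpha\,C(\rho,r)$ for some $C(\rho,r)>0$. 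Setting
\[
T(\rho,r;\alpha) := \frac{h - r(h+\log L) - H(2r)}{(1-r)\rho^\alpha + rD^\alpha + C(\rho,r)}
\]
and combining the two estimates yields $\Psi < P$ whenever $|\ph|_\alpha < T$, which via Theorem~\ref{t.mane} gives statement (1).

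Statement (2) is equivalent to checking that $T(\rho,r;\alpha) \to \infty$ as $\rho,r \to 0$. The numerator tends to $h > 0$, and the contributions $(1-r)\rho^\alpha$ and $rD^\alpha$ in the denominator vanish routinely. The main obstacle is therefore to show that $C(\rho,r) \to 0$: a naive choice of $\mu_0$ (such as a measure of maximal entropy of $g$) only yields a fixed positive constant, and would produce a finite limit of the form $h/D^\alpha$. To secure the vanishing, one must exploit the fact that as $\rho,r \to 0$ the Ma\~n\'e perturbation degenerates toward $f_A$, and combine this with the refined pressure lower bounds of \S\ref{sec:corollaries} --- built on the DA pressure machinery from \cite{CFT_BV} --- to trade off entropy against the $\alpha$-moment of a suitably chosen reference measure in a way that becomes uniform as the perturbation shrinks. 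I expect this trade-off, rather than the H\"older bookkeeping in Steps 1--2, to be where the technical work of the proof actually lies.
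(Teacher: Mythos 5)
Your reduction to Theorem \ref{t.mane}, the normalization $\ph(q)=0$, and the upper bound $\Psi(\rho,r,\ph)\le |\ph|_\alpha\bigl[(1-r)\rho^\alpha+rD^\alpha\bigr]+r(h+\log L)+H(2r)$ are all correct and agree with the paper. The gap is the pressure lower bound, and it is not a deferrable technicality: the estimate your definition of $T$ requires, namely $P(\ph;g)\ge h-|\ph|_\alpha\,C(\rho,r)$ for all H\"older $\ph$ with $C(\rho,r)\to0$, is false. Test it on $\ph=-t\,d(\cdot,q)^\alpha$ (so $\ph(q)=0$, $|\ph|_\alpha\le t$). Any invariant measure with entropy within $\eps_0$ of $h$ is weak$^*$-close to the Haar measure (uniqueness of the MME for $f_A$, transported to $g$ through the semiconjugacy, whose $C^0$-distance to the identity is $O(\rho)$), hence has $\int d(\cdot,q)^\alpha\,d\mu\ge c>0$ with $c$ depending only on $f_A$. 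Therefore $P(\ph;g)\le\max\{h-\eps_0,\,h-tc+o(1)\}$, which for $t$ moderately large stays bounded away from $h$ \emph{uniformly} as $\rho,r\to0$, contradicting $P\ge h-tC(\rho,r)$. The entropy-versus-$\alpha$-moment trade-off is a property of $f_A$ alone; shrinking the perturbation cannot make it vanish, so the mechanism you hope for in your last paragraph does not exist, and with a fixed $C$ you only get a finite threshold --- statement (2), which is the actual content of the theorem, remains unproved. (Invoking ``the refined pressure lower bounds of \S\ref{sec:corollaries}'' is also circular, since that is precisely the section whose result you are asked to prove.)

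The paper's route keeps your Step 1 but abandons the attempt to retain the full entropy $h$ in the lower bound: both $\Psi$ and $P$ are anchored at $\ph(q)$, i.e.\ at the Dirac measure $\delta_q$, rather than at a high-entropy measure. Theorem \ref{thm:pressure-gap} --- proved by a specification construction that inserts $m_n\approx\alpha n/(2(\tau+1))$ excursions into a $\mu$-generic orbit, producing $\binom{\lfloor n/(2(\tau+1))\rfloor}{m_n}$ many $(n,\eps)$-separated points whose Birkhoff sums are controlled via the Bowen property --- yields for the Anosov map $P(\ph;f_A)\ge\sup_\mu\int\ph\,d\mu+\delta\log\bigl(1+e^{-Q|\ph|_\alpha}\bigr)\ge\ph(q)+\delta\log\bigl(1+e^{-Q|\ph|_\alpha}\bigr)$ (Corollary \ref{cor:pressure-gap}), with $\delta,Q$ depending only on $f_A$; Lemma \ref{pressuredrop} transfers this to $g$ at cost $K^\alpha\rho^\alpha|\ph|_\alpha$. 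The decisive point is that this gap above $\ph(q)$ is \emph{independent of} $\rho,r$, while $\Psi(\rho,r,\ph)-\ph(q)\le S_1(\rho,r)|\ph|_\alpha+S_2(r)\to0$; so for each fixed value of $|\ph|_\alpha$ the inequality $\Psi<P$ holds once $\rho,r$ are small, which is exactly why $T(\rho,r;\alpha)\to\infty$. To repair your proof you need this uniform-in-$(\rho,r)$ pressure gap above $\sup_\mu\int\ph\,d\mu$ (or an equivalent entropy lower bound for equilibrium states), not a vanishing moment constant.
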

As an immediate consequence, we see that for a fixed H\"older potential $\ph$, there exist $\rho, \second$ so that there is a unique equilibrium state with respect to any $g \in \UUU_{\rho,\second}$.

Theorem \ref{cor1.2} is proved using Theorem \ref{thm:pressure-gap}, which is a general lower bound on the entropy of an equilibrium state with respect to $f_A$ in terms of the H\"older norm of the potential that allows us to estimate $P(\ph;g)$ from below.

We apply Theorem \ref{t.mane} to scalar multiples of the geometric potential $\phigeo(x) = \phigeo_g(x) = -\log\|Dg|_{E^u(x)}\|$. We obtain the following results. 

\begin{thma}\label{main3} 
Let $g \in \UUU_{\rho, \second}$ be a $C^2$ diffeomorphism. Suppose that
\begin{equation}\label{eqn:mane-srb-condition}
r(h + \log L )+H(2r) < \min \left\{\frac{\sup_{x\in \TT^d} \phigeo_g(x)}{\inf_{x\in\TT^d} \phigeo_g(x)} h, -\sup_{x\in \TT^d} \phigeo_g(x)\right\}.
\end{equation}
Then the following properties hold.
\begin{enumerate}[label=\textup{(\arabic{*})}]
\item $t=1$ is the unique root of the function $t\mapsto P(t\phigeo_g)$;
\item\label{ag} There exists $a=a(g)>0$ such that $t\phigeo$ has a unique equilibrium state $\mu_t$ for each $t\in (-a,1+a)$; 
\item $\mu_1$ is the unique SRB measure for $g$.
\end{enumerate}
\end{thma}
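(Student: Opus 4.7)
The plan is to apply Theorem~\ref{t.mane} to the parametrized family of potentials $t\phigeo_g$ for $t$ in an open interval containing $[0,1]$, and then identify the equilibrium state at $t=1$ with the SRB measure via Pesin's entropy formula. First, I will verify that $\phigeo_g$ is H\"older continuous: since $g$ is $C^2$ and $E^u$ is one-dimensional and uniformly expanding, $E^u$ varies H\"older continuously, so $\phigeo_g = -\log\|Dg|_{E^u}\|$ is H\"older and is an admissible potential for Theorem~\ref{t.mane}.

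A key input will be $P(\phigeo_g;g)=0$. Ruelle's inequality, applied with the unique positive Lyapunov exponent $-\phigeo_g$, gives $P(\phigeo_g;g)\le 0$, while existence of an SRB measure for a $C^2$ partially hyperbolic diffeomorphism with uniformly expanding $E^u$ (Pesin--Sinai / Bonatti--Viana) produces $\mu_{\mathrm{SRB}}$ satisfying Pesin's formula $h_{\mu_{\mathrm{SRB}}}=-\int \phigeo_g\, d\mu_{\mathrm{SRB}}$, yielding $P(\phigeo_g;g)\ge 0$. To check Theorem~\ref{t.mane}'s hypothesis I will exploit that for $t\ge 0$ the function $\Psi(\rho,r,t\phigeo_g)$ is linear in $t$:
\[
\Psi(\rho,r,t\phigeo_g) = tA + r(h+\log L) + H(2r), \quad A := (1-r)\sup_{B(q,\rho)}\phigeo_g + r \sup\phigeo_g \le \sup\phigeo_g < 0,
\]
while $\mu_{\mathrm{SRB}}$ supplies the lower bound $P(t\phigeo_g;g) \ge h_{\mu_{\mathrm{SRB}}} + t\int\phigeo_g\, d\mu_{\mathrm{SRB}}=(1-t)h_{\mu_{\mathrm{SRB}}} \ge (1-t)(-\sup\phigeo_g)$ on $[0,1]$. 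Combining with $-A\ge -\sup\phigeo_g$ gives
\[
P(t\phigeo_g;g) - \Psi(\rho,r,t\phigeo_g) \ge (-\sup\phigeo_g) - \bigl(r(h+\log L)+H(2r)\bigr) > 0,
\]
by the hypothesis $r(h+\log L)+H(2r)<-\sup\phigeo_g$. The companion bound $\frac{\sup\phigeo_g}{\inf\phigeo_g}h$ in the minimum yields an independent estimate via the measure of maximal entropy that sharpens the gap near $t=0$ and provides room for the extension past $[0,1]$. By continuity in $t$ of both sides, the strict inequality persists on an open neighborhood $(-a,1+a)\supset[0,1]$ for some $a=a(g)>0$, and Theorem~\ref{t.mane} produces a unique equilibrium state $\mu_t$ for each such $t$, proving~(2).

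For~(3), $\mu_1$ is the unique equilibrium state for $\phigeo_g$, so $h_{\mu_1}+\int \phigeo_g\, d\mu_1=P(\phigeo_g;g)=0$, i.e., $\mu_1$ satisfies Pesin's entropy formula; Ledrappier's characterization of SRB measures (valid because $E^u$ is one-dimensional and uniformly expanding) identifies $\mu_1$ as SRB, and conversely any SRB satisfies Pesin's formula and is an equilibrium state, so uniqueness transfers. For~(1), $t\mapsto P(t\phigeo_g;g)$ is convex and, at points of uniqueness within $(-a,1+a)$, differentiable with derivative $\int \phigeo_g\, d\mu_t<0$ (since $\phigeo_g<0$); together with $P(0;g) = h_{\mathrm{top}}(g) > 0$ and $P(\phigeo_g;g)=0$, this forces $t=1$ to be the unique root of $t\mapsto P(t\phigeo_g;g)$. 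The main technical hurdle is the uniform verification of the pressure gap $\Psi < P$ on $[0,1]$, which relies on combining both bounds appearing in the minimum in \eqref{eqn:mane-srb-condition} with the a priori knowledge $P(\phigeo_g;g)=0$ supplied by Ruelle's inequality together with the existence of SRB.
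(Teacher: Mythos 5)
Your verification of the pressure gap $\Psi(\rho,r,t\phigeo)<P(t\phigeo;g)$ on $[0,1]$ is a workable alternative to the paper's argument: the paper compares the line $l_1(t)=t\sup\phigeo+r(h+\log L)+H(2r)$ bounding $\Psi$ from above with the MME bound $l_2(t)=h+t\inf\phigeo$ on $[0,t^*]$ (this is where the term $\frac{\sup\phigeo}{\inf\phigeo}h$ in \eqref{eqn:mane-srb-condition} is used) and with $P(t\phigeo)\geq P(\phigeo)\geq 0$ on $(t^*,1]$, where $P(\phigeo)\geq 0$ comes from the leaf-volume Lemma \ref{lem:Pgeq0} rather than from quoting existence of SRB/u-Gibbs states. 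Your lower bound $P(t\phigeo)\geq(1-t)(-\sup\phigeo)$ via a Gibbs $u$-state is legitimate provided you import the correct fact: for such a measure one has the inequality $h_\mu\geq -\int\phigeo\,d\mu$ (unstable entropy formula along $E^u$), not ``Pesin's formula $h_\mu=-\int\phigeo\,d\mu$'', since $\lambda^+(\mu)$ may exceed $-\int\phigeo\,d\mu$ when the center exponent is positive.

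The genuine gap is your claim that $P(\phigeo;g)\leq 0$ follows from Ruelle's inequality ``applied with the unique positive Lyapunov exponent $-\phigeo_g$''. For Ma\~n\'e diffeomorphisms an ergodic measure need not have a unique positive exponent: the center exponent can be nonnegative (this is the whole point of the construction near $q$). For such a measure $\mu$, $\int\phigeo\,d\mu=-\lambda_d(\mu)$ while Ruelle only gives $h_\mu\leq\lambda^+(\mu)=\lambda_d(\mu)+\lambda_{d-1}(\mu)+\cdots$, so $h_\mu+\int\phigeo\,d\mu$ may a priori be positive. This is exactly what Lemma \ref{lem:keySRBestimate} is for: measures outside $\mathcal{M}_*$ are shown (via Lemma \ref{centrestable}, Pliss-type concentration on the set $A^+$, and Lemma \ref{keystepexpansivityestimate}) to have free energy at most $P(\CCC,\phigeo)\leq\Pbad(1)<0$, which is a second, essential use of the hypothesis $r(h+\log L)+H(2r)<-\sup\phigeo$. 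Without this step you do not obtain $P(\phigeo;g)=0$, so part (1) is unproved; and part (3) also breaks, because identifying $h_{\mu_1}+\int\phigeo\,d\mu_1=0$ with Pesin's entropy formula $h_{\mu_1}=\lambda^+(\mu_1)$ requires knowing that $\mu_1$ is hyperbolic with negative center exponent, i.e.\ $\mu_1\in\mathcal{M}_*$ — again supplied by Lemma \ref{lem:keySRBestimate} and missing from your proposal. You need either this lemma or an equivalent mechanism ruling out large-entropy measures with nonnegative center exponent before Ruelle/Ledrappier--Young can be invoked as you do.
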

 
The quantity $\sup \phigeo / \inf \phigeo$  is uniformly positive for all Ma\~n\'e diffeomorphisms, and the construction can be carried out so $ \sup \phigeo_{f_M}$ and $\inf \phigeo_{f_M}$ are both close to 
$ \phigeo_{f_A} = - \log \lambda_u = - h$, where $\lambda_u$ is the unique eigenvalue of $A$ greater than $1$, so the right hand side of \eqref{eqn:mane-srb-condition} is close to $\log \lambda_u$. Thus, the inequality \eqref{eqn:mane-srb-condition} holds when $\rho, r$ are small. For a  sequence of Ma\~n\'e diffeomorphisms $f_k \in \FFF_{\rho_k, \second_k}$ with $ \rho_k, \second_k\to 0$, it is easy to ensure that the H\"older semi-norm of $\phigeo_{f_k}$ is uniformly bounded (for example by ensuring that the restrictions of each $f_k$ to $B(q, \rho_k)$ are rescalings of each other). In this case, applying Theorem \ref{cor1.2}, we see that $a(f_k) \to \infty$ as $k \to \infty$ in \ref{ag} above.

In \S\ref{s.ldp}, we derive consequences of Theorem \ref{main3} for the multifractal analysis of the largest Lyapunov exponent, and give an upper large deviations principle for the equilibrium states in our main theorems. 

We now discuss related results in the literature for partially hyperbolic systems, which have largely focused on the measure of maximal entropy (MME).  For ergodic toral automorphisms, the Haar measure was shown to be the unique MME by Berg \cite{Berg} using convolutions. 
Existence of a unique MME for the Ma\~n\'e examples was obtained in~\cite{BFSV}.  For partially hyperbolic diffeomorphisms of the 3-torus homotopic to a hyperbolic automorphism, uniqueness of the MME was proved by Ures \cite{Ur}.

The techniques for the results in the previous paragraph are not well suited to the study of equilibrium states for $\varphi \neq 0$, which remains largely unexplored.  When 
the first version of the present work appeared [arXiv:1505.06371v1], 
 the only available references for this subject were existence results for a certain class of partially hyperbolic horseshoes \cite{LOR}, with uniqueness results only for potentials constant on the center-stable direction \cite{AL12}. An improved picture has emerged since then. 
Spatzier and Visscher studied uniqueness of equilibrium states for frame flows \cite{SV}.   Rios and Siqueira \cite{RS15} obtained uniqueness for H\"older potentials with small variation for certain partially hyperbolic horseshoes, and Ramos and Siqueira studied statistical properties of these equilibrium states \cite{RS16}.  Crisostomo and Tahzibi \cite{CT16} studied uniqueness of equilibrium states for partially hyperbolic DA systems on $\TT^3$, including the Ma\~n\'e family, using techniques very different from ours 
under the extra assumption that the potential is constant on `collapse intervals' of the semi-conjugacy.


The theory of SRB measures is much more developed. The fact that there is a unique SRB measure for the Ma\~n\'e diffeomorphisms follows from~\cite{BV}. The statistical properties of SRB measures is an active area of research \cite{AL, aC02, PSZ}.  
In \cite{CV}, interesting results are obtained on the continuity of the entropy of the SRB measure.

The characterization of the SRB measure as an equilibrium state for DA systems obtained along an arc of $C^\infty$  diffeomorphisms was established by Carvalho \cite{mC93}, with partial results in the $C^r$ case. However, the characterization of the SRB measure as a \emph{unique} equilibrium state is to the best of our knowledge novel for the Ma\~n\'e class.  Immediate consequences of this characterization include the upper large deviations principle and  multifractal analysis results of \S\ref{s.ldp}.


We now outline the paper. In \S\ref{s.back}, we give background material from \cite{CT4} on thermodynamic formalism.  In \S\ref{s.perturb}, we recall pressure estimates on $C^0$-perturbations of Anosov systems.  
In \S\ref{s.constructions}, we give details of the Ma\~n\'e construction. In \S\ref{s.mane.pf}, we prove Theorem \ref{t.mane}.  In \S \ref{sec:corollaries}, we 
prove Theorem \ref{cor1.2}. In \S\ref{s.srb}, we prove Theorem \ref{main3}.   In \S\ref{s.ldp}, we give results on large deviations and multifractal analysis.  In the preliminary sections \S\S~\ref{s.back}--\ref{s.perturb}, we follow the presentation of \cite{CFT_BV}, referring the reader to that paper for the proofs of some necessary background material.



\section{Background and preliminary results}\label{s.back}



\subsection{Pressure} 
Let $f\colon X\to X$ be a continuous map on a compact metric space.  We identify $X\times \mathbb{N}$ with the space of finite orbit segments by identifying $(x,n)$ with $(x,f(x),\dots,f^{n-1}(x))$.

Given a continuous potential function $\varphi\colon X\to \mathbb{R}$, write
$
S_n\varphi(x) = S_n^f \ph(x) = \sum_{k=0}^{n-1} \varphi(f^kx)
$.
For each $\eta>0$, write 
\[
\Var(\ph,\eta) = \sup\{|\ph(x)-\ph(y)| \,:\, x,y\in X, d(x,y) < \eta\}.
\]
Since we consider H\"older potentials, we will often use the bound
\[
\Var(\ph,\eta) \leq |\ph|_\alpha \eta^\alpha,
\text{ where }
|\ph|_\alpha := \sup_{x\neq y} \frac{|\ph(x)-\ph(y)|}{d(x,y)^\alpha}.
\]
The $n$th Bowen metric associated to $f$ is defined by
\[
d_n(x,y) = \max \{ d(f^kx,f^ky) \,:\, 0\leq k < n\}.
\]
Given $x\in X$, $\eps>0$, and $n\in \NN$, the \emph{Bowen ball of order $n$ with center $x$ and radius $\eps$} is
$
B_n(x,\eps) = \{y\in X \, :\,  d_n(x,y) < \eps\}.
$
A set $E\subset X$ is $(n,\eps)$-separated if $d_n(x,y) \geq \eps$ for all $x,y\in E$.

Given $\mathcal{D}\subset X\times \mathbb{N}$, we interpret $\DDD$ as a \emph{collection of orbit segments}. Write $\mathcal{D}_n = \{x\in X \, :\,  (x,n)\in \mathcal{D}\}$ for the set of initial points of orbits of length $n$ in $\mathcal{D}$.  Then we consider the partition sum
$$
\Lambda^{\mathrm{sep}}_n(\DDD,\ph,\epsilon; f) =\sup
\Big\{ \sum_{x\in E} e^{S_n\ph(x)} \, :\,  E\subset \mathcal{D}_n \text{ is $(n,\epsilon)$-separated} \Big\}.
$$
The \emph{pressure of $\ph$ on $\DDD$ at scale $\eps$} is 
$$
P(\mathcal{D},\ph,\epsilon; f) = \varlimsup_{n\to\infty} \frac 1n \log \Lambda^{\mathrm{sep}}_n(\mathcal{D},\ph,\epsilon),
$$
and the \emph{pressure of $\varphi$ on $\mathcal{D}$} is
$$
P(\mathcal{D},\ph; f) = \lim_{\epsilon\to 0}P(\mathcal{D},\ph,\epsilon).
$$

Given $Z \subset X$, let $P(Z, \varphi, \epsilon; f) := P(Z \times \NN, \varphi, \epsilon; f)$; observe that $P(Z, \varphi; f)$ denotes the usual upper capacity pressure \cite{Pesin}. We often write $P(\ph;f)$ in place of $P(X, \ph;f)$ for the pressure of the whole space.

When $\ph=0$, our definition gives the \emph{entropy of $\mathcal{D}$}:
\begin{equation}\label{eqn:h}
\begin{aligned}
h(\mathcal{D}, \epsilon; f)= h(\mathcal{D}, \epsilon) &:= P(\mathcal{D}, 0, \epsilon) \mbox{ and } h(\mathcal{D})= \lim_{\epsilon\rightarrow 0} h(\mathcal{D}, \epsilon).
\end{aligned}
\end{equation}

Write $\mathcal{M}(f)$ for the set of $f$-invariant Borel probability measures and $\mathcal{M}_e(f)$ for the set of ergodic measures in $\mathcal{M}(f)$.
The variational principle for pressure \cite[Theorem 9.10]{Wal} states that 
\[
P(\varphi;f)=\sup_{\mu\in \mathcal{M}(f)}\left\{ h_{\mu}(f) +\int \varphi \,d\mu\right\}
=\sup_{\mu\in \mathcal{M}_e(f)}\left\{ h_{\mu}(f) +\int \varphi \,d\mu\right\}.
\]
A measure achieving the supremum is an \emph{equilibrium state}.

\subsection{Obstructions to expansivity, specification, and regularity}

Bowen showed in \cite{Bow75} that if $(X,f)$ has expansivity and specification, and $\ph$ has a certain regularity property, then there is a unique equilibrium state. We recall definitions and results from \cite{CT4}, which show that non-uniform versions of Bowen's hypotheses suffice to prove uniqueness.

Given a homeomorphism $f\colon X\to X$, the \emph{bi-infinite Bowen ball around $x\in X$ of size $\eps>0$} is the set
\[
\Gamma_\eps(x) := \{y\in X \, :\,  d(f^kx,f^ky) < \eps \text{ for all } n\in \ZZ \}.
\]
If there exists $\eps>0$ for which $\Gamma_\eps(x)= \{x\}$ for all $x\in X$, we say $(X, f)$ is \emph{expansive}. When $f$ is not expansive, it is useful to consider the \emph{tail entropy} of $f$ at scale $\eps>0$ \cite{rB72,mM76}:
\begin{equation}\label{eqn:Bowen-tail-entropy}
h_f^*(\epsilon) = \sup_{x\in X} \lim_{\delta\to 0} \limsup_{n\to\infty}
\frac 1n \log \Lspan_n(\Gamma_\eps(x),\delta;f),
\end{equation}
where for a set $Y \subset X$, $\Lspan_n(Y, \delta;f) =\inf \{ \# E :\ Y \subset \bigcup_{x\in E} \overline{ B_n(x,\delta)} \}$.

\begin{defn} \label{almostexpansive}
For $f\colon X\rightarrow X$ the set of non-expansive points at scale $\epsilon$ is  $\mathrm{NE}(\epsilon):=\{ x\in X \, :\,  \Gamma_\eps(x)\neq \{x\}\}$.  An $f$-invariant measure $\mu$ is  almost expansive at scale $\epsilon$ if $\mu(\mathrm{NE}(\epsilon))=0$.  Given a potential $\varphi$, the pressure of obstructions to expansivity at scale $\epsilon$ is
\begin{align*}
\Pexp(\varphi, \epsilon) &=\sup_{\mu\in \mathcal{M}_e(f)}\left\{ h_{\mu}(f) + \int \varphi\, d\mu\, :\, \mu(\mathrm{NE}(\epsilon))>0\right\} \\
&=\sup_{\mu\in \mathcal{M}_e(f)}\left\{ h_{\mu}(f) + \int \varphi\, d\mu\, :\, \mu(\mathrm{NE}(\epsilon))=1\right\}.
\end{align*}
This is monotonic in $\eps$, so we can define a scale-free quantity by
\[
\Pexp(\varphi) = \lim_{\epsilon \to 0} \Pexp(\varphi, \epsilon).
\]
\end{defn}

\begin{defn} 
A collection of orbit segments $\mathcal{G}\subset X\times \mathbb{N}$ has \emph{specification at scale $\epsilon$} if there exists $\tau\in\mathbb{N}$ such that for every $\{(x_j, n_j)\, :\, 1\leq j\leq k\}\subset \mathcal{G}$, there is a point $x$ in
$$\bigcap_{j=1}^k f^{-(m_{j-1}+ \tau)}B_{n_j}(x_j, \epsilon),$$
where $m_{0}=-\tau$ and $m_j = \left(\sum_{i=1}^{j} n_i\right) +(j-1)\tau$ for each $j \geq 1$.
\end{defn}

The above definition says that there is some point $x$ whose trajectory shadows each of the $(x_i,n_i)$ in turn, taking a transition time of exactly $\tau$ iterates between each one.  The numbers $m_j$ for $j\geq 1$ are the time taken for $x$ to shadow $(x_1, n_1)$ up to $(x_j, n_j)$.

\begin{defn} \label{Bowen}
Given $\mathcal{G}\subset X\times \mathbb{N}$, a potential $\varphi$  has the \emph{Bowen property on $\GGG$ at scale $\epsilon$} if
\[
V(\GGG,\ph,\epsilon) := \sup \{ |S_n\varphi (x) - S_n\varphi(y)| : (x,n) \in \GGG, y \in B_n(x, \epsilon) \} <\infty.
\]
We say $\varphi$ has the \emph{Bowen property on $\GGG$} if there exists $\epsilon>0$ so that $\varphi$ has the Bowen property on $\GGG$ at scale $\epsilon$.
\end{defn}

We refer to an upper bound for $V(\GGG,\ph,\eps)$ as a \emph{distortion constant}.
Note that if $\GGG$ has the Bowen property at scale $\eps$, then it has it for all smaller scales. 


\subsection{General results on uniqueness of equilibrium states}

Our main tool for existence and uniqueness of equilibrium states is \cite[Theorem 5.5]{CT4}.

\begin{defn}
A \emph{decomposition} for $(X,f)$ consists of three collections $\mathcal{P}, \mathcal{G}, \mathcal{S}\subset X\times (\NN\cup\{0\})$ and three functions $p,g,s\colon X\times \mathbb{N}\to \NN\cup\{0\}$ such that for every $(x,n)\in X\times \NN$, the values $p=p(x,n)$, $g=g(x,n)$, and $s=s(x,n)$ satisfy $n = p+g+s$, and 
\begin{equation}\label{eqn:decomposition}
(x,p)\in \mathcal{P}, \quad (f^p(x), g)\in\mathcal{G}, \quad (f^{p+g}(x), s)\in \mathcal{S}.
\end{equation}
Given a decomposition $(\PPP,\GGG,\SSS)$ and $M\in \mathbb{N}$, we write $\GGG^M$ for the set of orbit segments $(x,n)$ for which $p \leq M$ and $s\leq M$.
\end{defn}

Note that the symbol $(x,0)$ denotes the empty set, and the functions $p, g, s$ are permitted to take the value zero. 
 
\begin{thm}[Theorem 5.5 of \cite{CT4}]\label{t.generalM}
Let $X$ be a compact metric space and $f\colon X\to X$ a homeomorphism. 
Let $\ph \colon X\to\RR$ be a continuous potential function.
Suppose that $\Pexp(\ph) < P(\ph)$, and that $(X,f)$ admits a decomposition $(\PPP, \GGG, \SSS)$ with the following properties:
\begin{enumerate}
\item $\GGG$ has specification at any scale;
\item  $\ph$ has the Bowen property on $\GGG$;
\item $P(\PPP \cup \SSS,\ph) < P(\ph)$.
\end{enumerate}
Then there is a unique equilibrium state for $\ph$.
\end{thm}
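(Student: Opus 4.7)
The plan is to adapt Bowen's classical uniqueness argument (which requires expansivity, specification, and the Bowen property on all orbit segments) to the weaker decomposition setting. The two main moves are (i) build a reference measure $\mu^{*}$ satisfying a Gibbs-type lower bound on the good orbit segments $\GGG^{M}$, and (ii) show that any ergodic equilibrium state $\nu$ is absolutely continuous with respect to $\mu^{*}$, which forces uniqueness since distinct ergodic invariant measures are mutually singular.

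First I fix scales $0 < \delta < \eps$ small enough that (a) $\ph$ has the Bowen property on $\GGG$ at scale $\eps$ with some distortion constant $V$; (b) $\Pexp(\ph,\eps) < P(\ph) - 3\gamma$ and $P(\PPP \cup \SSS, \ph, \eps) < P(\ph) - 3\gamma$ for some $\gamma > 0$, using the hypotheses together with monotonicity in $\eps$; and (c) $\GGG$ has specification at scale $\delta$ with some gap $\tau \in \NN$. For each $(x,n)$ the decomposition gives $n = p + g + s$, and the pressure bound on $\PPP \cup \SSS$ lets me choose $M$ large enough that the contribution to the partition sum from orbits with $p > M$ or $s > M$ is thermodynamically negligible; concretely,
\[
\Lambda_{n}^{\mathrm{sep}}(\GGG^{M},\ph,\eps) \geq e^{n(P(\ph) - \gamma)} \text{ for all large } n.
\]
Now form the weighted empirical measures $\mu_{n} := \Lambda_{n}^{-1}\sum_{x \in E_{n}} e^{S_{n}\ph(x)} \delta_{x}$ with $E_{n}$ a nearly maximal $(n,\eps)$-separated subset of $\GGG^{M}_{n}$, take Ces\`aro averages of their pushforwards $f^{k}_{*}\mu_{n}$, and extract a weak-$*$ limit $\mu^{*}$. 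Standard arguments (using the variational principle and that the exponential growth rate of $\Lambda_{n}$ is $P(\ph)$) show $\mu^{*}$ is an equilibrium state.

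The heart of the proof is the Gibbs-type lower bound
\[
\mu^{*}\bigl(\overline{B_{n}(x,\delta)}\bigr) \geq Q\, e^{-nP(\ph) + S_{n}\ph(x)} \text{ for all } (x,n) \in \GGG^{M}
\]
with $Q > 0$ independent of $(x,n)$. Given a near-maximal separated set $E_{m} \subset \GGG^{M}_{m}$ for $m \gg n + 2M + 2\tau$, I use specification on $\GGG$ at scale $\delta$ to prepend and append short orbit segments to the given $(x,n) \in \GGG^{M}$, producing a set of shadowing orbits that hit $B_{n}(x,\delta)$ and whose weighted count is comparable to $\Lambda_{m}$; the Bowen property controls $e^{S_{n}\ph}$ up to $e^{V}$ on the intervening Bowen balls, giving the desired lower bound at time $m$, which passes to $\mu^{*}$.

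Finally, given any ergodic equilibrium state $\nu$, the hypothesis $\Pexp(\ph) < P(\ph)$ forces $\nu(\mathrm{NE}(\eps)) = 0$, and the pressure gap on $\PPP \cup \SSS$ combined with Birkhoff's ergodic theorem implies that $\nu$-a.e.\ orbit spends positive asymptotic frequency of time inside $\GGG^{M}$ for some large $M$ (otherwise one could upper bound $h_{\nu}(f) + \int \ph\,d\nu$ by $P(\PPP \cup \SSS, \ph)$, contradicting the equilibrium property via an Abramov-style computation). Combining this with the Brin--Katok entropy formula at scale $\delta$ and the Gibbs lower bound shows $\nu \ll \mu^{*}$, and ergodicity then gives $\nu = \mu^{*}$. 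I expect the main obstacle to be the Gibbs lower bound: one must control the Bowen-distortion and the orbit-gluing simultaneously while ensuring all glued orbits still lie in $\GGG^{M}$, and it is precisely the bounded prefix/suffix trimming provided by the decomposition that makes specification usable here.
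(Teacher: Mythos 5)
This theorem is not proved in the paper at all --- it is quoted verbatim from [CT4], so the comparison must be with the proof given there. Your overall architecture does match that proof's Bowen-style strategy: lower bounds on partition sums over $\GGG^M$ extracted from the decomposition and the pressure gap on $\PPP\cup\SSS$, a candidate measure $\mu^*$ built from weighted $(n,\eps)$-separated sets, a lower Gibbs bound on $\GGG^M$ obtained by gluing with specification after trimming the bounded prefix/suffix, and the expansivity gap to make fixed-scale entropy/pressure computations legitimate (you do gloss over the scale-bridging point that $P(\ph,\eps)=P(\ph)$ must be justified before the Misiurewicz argument yields that $\mu^*$ is an equilibrium state, but the needed hypothesis $\Pexp(\ph,\eps)<P(\ph)$ is in your setup).

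The genuine gap is the uniqueness endgame. First, ``Brin--Katok at scale $\delta$ plus the lower Gibbs bound shows $\nu\ll\mu^*$'' is not a valid inference: along $\nu$-generic points both estimates only control exponential rates, $\nu(B_n(x,\delta))=e^{-nh_\nu(f)+o(n)}$ and $\mu^*(B_n(x,\delta))\geq Q e^{-nP(\ph)+S_n\ph(x)}=e^{-nh_\nu(f)+o(n)}$, and two measures can have identical local entropies while being mutually singular; absolute continuity needs uniform comparisons plus a covering argument that this sketch does not supply (the classical Bowen argument instead bounds the free energy of any measure singular to $\mu^*$, and for that it also needs an \emph{upper} Gibbs bound for $\mu^*$, which you never establish). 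Second, even granting $\nu\ll\mu^*$, you cannot conclude $\nu=\mu^*$ from ``distinct ergodic measures are mutually singular'' unless $\mu^*$ is known to be ergodic --- if $\mu^*$ were a nontrivial average of two ergodic equilibrium states, each would be absolutely continuous with respect to it --- and ergodicity of the candidate (or some substitute for it) is a separate essential step that your proposal omits. Third, your intermediate claim that $\nu$-a.e.\ orbit ``spends positive asymptotic frequency of time inside $\GGG^M$'' is not the statement you need: the decomposition only places the good core at $f^{p(x,n)}x$, whereas your Gibbs bound applies to segments $(x,n)\in\GGG^M$ based at $x$ itself, so one must control where the core sits and transfer estimates back to Bowen balls centered at $x$; handling this positioning problem (together with the change of scale between $\delta$ and $\eps$) is exactly where the proof in [CT4] expends most of its technical effort, and as written your argument would fail there.
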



\section{Perturbations of Anosov Diffeomorphisms}\label{s.perturb}
We collect some background material about weak forms of hyperbolicity and perturbations of Anosov diffeomorphisms.

\subsection{Partial hyperbolicity} \label{basics}

Let $M$ be a compact manifold.  Recall that a diffeomorphism $f\colon M\to M$ is \emph{partially hyperbolic} if there is a $Df$-invariant splitting $TM=E^s\oplus E^c\oplus E^u$ and constants $N\in \NN$, $\lambda>1$ such that for every $x\in M$ and every unit vector $v^\sigma\in E^\sigma$ for $\sigma\in \{s, c, u\}$, we have
\begin{enumerate}[label=(\roman{*})]
\item $\lambda \|Df^N_x v^s\|<\|Df^N_x v^c\|<\lambda^{-1}\|Df^N_x v^u\|$, and
\item $\|Df^N_x v^s\|<\lambda^{-1}<\lambda<\|Df^N_x v^u\|$.
\end{enumerate}

A partially hyperbolic diffeomorphism $f$ admits \emph{stable and unstable foliations} $W^s$ and $W^u$, which are $f$-invariant and tangent to $E^s$ and $E^u$, respectively \cite[Theorem 4.8]{yP04}.  There may or may not be foliations tangent to either $E^c$, $E^s\oplus E^c$, or $E^c\oplus E^u$.  When these exist we denote these by $W^c$, $W^{cs}$, and $W^{cu}$ and refer to these as the center, center-stable, and center-unstable foliations respectively.  For $x\in M$, we let $W^{\sigma}(x)$ be the leaf of the foliation $\sigma\in \{s, u, c, cs, cu\}$ containing $x$ when this is defined.

For a foliation $W$, we write  $d_W$ for the leaf metric, and write $W_\eta(x)$ for the $d_W$-ball of radius $\eta$ in $W(x)$. 
Suppose $W^1,W^2$ are foliations of $M$ with the property that $TM = TW^1 \oplus TW^2$.   
We say that $W^1,W^2$ have a \emph{local product structure at scale $\eta>0$ with constant $\kappa\geq 1$} if for every $x,y\in M$ with $\eps := d(x,y) < \eta$, the leaves $W^1_{\kappa \eps}(x)$ and $W^2_{\kappa \eps}(y)$ intersect in a single point.

\subsection{Anosov shadowing lemma} \label{constants}

The Anosov shadowing lemma is proved in e.g. \cite[Theorem 1.2.3]{sP99}.

\begin{lem}[Anosov Shadowing Lemma]\label{shadowinglemma}
Let $f$ be a transitive Anosov diffeomorphism. There is $C=C(f)$ so that if $2 \eta>0$ is an expansivity constant for $f$, then every $\frac\eta C$-pseudo-orbit 
for $f$ can be $\eta$-shadowed by an orbit 
for $f$. 
\end{lem}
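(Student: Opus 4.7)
The plan is to use uniform hyperbolicity of $f$ to set up a contraction mapping on sequences of small vectors in tangent spaces along the pseudo-orbit, whose fixed point gives the shadowing orbit. The standard approach is to transfer the problem into local charts via the exponential map and apply a hyperbolic fixed-point theorem.

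First, I would adapt the Riemannian metric so that $Df$ has uniform contraction rate $\la<1$ on $E^s$ and expansion $\la^{-1}$ on $E^u$, and choose $\eta_0>0$ small enough that $2\eta_0$ is an expansivity constant, $\exp_x$ is a uniform diffeomorphism on $B(0,\eta_0)\subset T_xM$ with derivative close to the identity, and $W^s,W^u$ admit local product structure at scale $\eta_0$. Given a $\delta$-pseudo-orbit $\{x_i\}_{i\in\ZZ}$, the shadowing problem is to find a bounded sequence $v_i\in T_{x_i}M$ with $\exp_{x_{i+1}}(v_{i+1}) = f(\exp_{x_i}(v_i))$. Setting $F_i(v) = \exp_{x_{i+1}}^{-1}\circ f\circ \exp_{x_i}(v)$, one decomposes $F_i(v) = a_i + A_iv + R_i(v)$, where $\|a_i\|\leq \delta$, the linear part $A_i$ is close to $Df_{x_i}$ (hence hyperbolic with nearly the same constants), and $R_i$ is a nonlinear error of order $\|v\|^2$.

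Second, on the Banach space $\BBB = \{(v_i)_{i\in\ZZ}:\sup_i\|v_i\|<\infty\}$ with sup norm, the shadowing equation $v_{i+1} = F_i(v_i)$ is equivalent to the fixed-point equation $v = L^{-1}(a + R(v))$, where $L$ is the linear operator $(Lv)_i = v_i - A_{i-1}v_{i-1}$. The key estimate is that $L$ is invertible on $\BBB$, with $\|L^{-1}\|\leq K(\la)$ depending only on the hyperbolicity rate; this follows from treating stable and unstable components separately via geometric series in $\la$. The Banach fixed-point theorem applied on the ball of radius $\eta$ in $\BBB$ then yields a unique solution with $\|v_i\|\leq 2K(\la)\delta$, provided $\eta\leq\eta_0$ is small enough that the nonlinear term $R$ has Lipschitz constant $\leq 1/(2K(\la))$ on that ball. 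Choosing $C \geq 2K(\la)$ ensures $\|v_i\|<\eta$, so the orbit $y_i = \exp_{x_i}(v_i)$ shadows $\{x_i\}$ to within $\eta$.

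The main obstacle is establishing the uniform bound $\|L^{-1}\|\leq K(\la)$ along an arbitrary pseudo-orbit rather than a true orbit. Because the invariant splitting $E^s\oplus E^u$ is only continuous, parallel transport from $T_{x_i}M$ to $T_{x_{i+1}}M$ does not preserve it exactly, so the $A_i$ do not immediately inherit a common hyperbolic splitting. One must show that for $\delta$ small enough, the sequence $(A_i)$ admits its own uniformly hyperbolic invariant splitting, obtained by a cone-field argument perturbing the splitting at each $x_i$. Once this is in place, the construction of $L^{-1}$ via geometric series in the stable and unstable directions goes through uniformly in the pseudo-orbit, yielding the required constant $C=C(f)$.
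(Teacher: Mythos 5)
The paper does not prove this lemma; it simply cites the standard reference \cite[Theorem 1.2.3]{sP99}, and your argument is essentially the classical proof given there (and in Anosov/Bowen/Katok--Hasselblatt): pass to exponential charts along the pseudo-orbit, reduce shadowing to the fixed-point equation $v=L^{-1}(a+R(v))$ on bounded sequences, invert $L$ using uniform hyperbolicity of the nearly-invariant splittings along the pseudo-orbit, and apply the Banach fixed-point theorem, which yields $\|v_i\|\leq 2K(\lambda)\delta$ and hence the uniform constant $C$ with $\eta/C$-pseudo-orbits $\eta$-shadowed for all sufficiently small $\eta$. Your outline is correct, including the one genuinely delicate point you flag (hyperbolicity of the sequence $(A_i)$ along a pseudo-orbit, handled by continuity of the splitting plus a cone/perturbation argument), so there is nothing to add beyond noting that the paper itself offers no alternative argument to compare against.
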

The following result is proved in \cite[Lemma 3.2]{CFT_BV} using the natural semi-conjugacy which exists for maps in a $C^0$ neighborhood of $f$ as a consequence of the Anosov shadowing lemma.

\begin{lem} \label{pressuredrop}
Let $f$ be a transitive Anosov diffeomorphism, $C= C(f)$ the constant from the shadowing lemma, and $3 \eta>0$ an expansivity constant for $f$.  If $g\in \Diff(M)$ is such that $d_{C^0}(f,g) < \eta/C$, then:
\begin{enumerate}[label=\textup{(\roman{*})}]
\item\label{Pg-geq}
 $P(\varphi; g)\geq P(\varphi;f)- \Var(\varphi, \eta)$;
\item\label{Lambdag-leq}
 $\Lambda^{\mathrm{sep}}_n(\varphi, 3 \eta ;g) \leq \Lambda^{\mathrm{sep}}_n(\varphi, \eta ;f)e^{n \Var(\varphi, \eta) }$.
\end{enumerate}
\end{lem}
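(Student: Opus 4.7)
The plan is to exploit the Anosov shadowing lemma to build a continuous semi-conjugacy $\pi\colon M\to M$ satisfying $\pi\circ g = f\circ \pi$ and $d(\pi(x),x)\le \eta$ for every $x\in M$. Since $d_{C^0}(f,g) < \eta/C$, every $g$-orbit is an $(\eta/C)$-pseudo-orbit for $f$, so Lemma~\ref{shadowinglemma} (applied with $2\eta$, which is an expansivity constant since $3\eta$ is) produces an $f$-orbit $\eta$-shadowing it, and expansivity of $2\eta$ forces uniqueness. Defining $\pi(x)$ as the initial point of this shadowing orbit yields a well-defined map; the intertwining $\pi\circ g = f\circ\pi$ is immediate from uniqueness because the $g$-orbit of $gx$ is the shift of the $g$-orbit of $x$, and continuity of $\pi$ follows from a standard compactness argument (any subsequential limit of $\pi(x_n)$ must $\eta$-shadow the $g$-orbit of $\lim x_n$ and hence coincide with $\pi(\lim x_n)$).

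For part~\ref{Pg-geq}, I will lift invariant measures across $\pi$. Because $\pi$ is a continuous factor map from $(M,g)$ onto $(M,f)$, for any $\mu\in\mathcal{M}(f)$ there exists $\tilde\mu\in\mathcal{M}(g)$ with $\pi_*\tilde\mu = \mu$, and since entropy does not decrease when lifting through a factor map, $h_{\tilde\mu}(g)\geq h_{\mu}(f)$. The identity $\int\varphi\,d\mu = \int \varphi\circ\pi\,d\tilde\mu$ together with $d(\pi(x),x)\leq \eta$ gives $|\int\varphi\,d\tilde\mu - \int\varphi\,d\mu|\leq \Var(\varphi,\eta)$. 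Summing the two estimates and passing to the supremum over $\mu\in\mathcal{M}(f)$ via the variational principle yields~\ref{Pg-geq}.

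For part~\ref{Lambdag-leq}, I will transport separated sets across $\pi$. Let $E\subset M$ be $(n,3\eta)$-separated for $g$. For distinct $x,y\in E$, choose a separating time $k<n$ with $d(g^k x, g^k y)\geq 3\eta$; the triangle inequality together with the shadowing bound yields $d(f^k\pi(x),f^k\pi(y))\geq 3\eta-\eta-\eta=\eta$, so $\pi(E)$ is $(n,\eta)$-separated for $f$ and $\pi|_E$ is injective. A term-by-term comparison gives $|S_n^g\varphi(x)-S_n^f\varphi(\pi(x))|\leq n\Var(\varphi,\eta)$, so
\[
\sum_{x\in E}e^{S_n^g\varphi(x)} \;\leq\; e^{n\Var(\varphi,\eta)}\sum_{y\in\pi(E)}e^{S_n^f\varphi(y)} \;\leq\; e^{n\Var(\varphi,\eta)}\,\Lsep_n(\varphi,\eta;f);
\]
taking the supremum over $E$ proves~\ref{Lambdag-leq}.

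The main obstacle is the construction of $\pi$ itself: the shadowing lemma provides candidate orbits pointwise, but promoting these to a bona fide semi-conjugacy requires invoking expansivity to obtain uniqueness, deducing the intertwining relation from that uniqueness, and running a compactness argument for continuity. Once $\pi$ is in hand, both conclusions follow in a routine manner from the proximity bound $d(\pi(x),x)\leq \eta$ and the definition of $\Var(\varphi,\eta)$.
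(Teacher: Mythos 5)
Your argument is correct and is essentially the proof the paper relies on: the statement is deferred to \cite[Lemma 3.2]{CFT_BV}, which is proved exactly as you do, via the natural semi-conjugacy $\pi$ with $\pi\circ g=f\circ\pi$ and $d(\pi(x),x)\le\eta$ furnished by the shadowing lemma plus expansivity, lifting invariant measures through $\pi$ for (i) and pushing $(n,3\eta)$-separated sets forward to $(n,\eta)$-separated sets for (ii). The only step you gloss over is the surjectivity of $\pi$ (needed to lift every $f$-invariant measure), which should be justified by noting that $\pi$ is uniformly close to the identity, hence homotopic to it and of degree one.
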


In particular, \ref{Lambdag-leq} gives
$P(\varphi, 3\eta; g) \leq P(\varphi; f) + \Var(\varphi, \eta)$.

\subsection{Pressure estimates} \label{pressureestimate}

The Ma\~n\'e examples are $C^0$ perturbations of Anosov maps, where the perturbation is made inside a neighborhood of a fixed point $q$.  
We estimate the pressure of orbit segments spending nearly all their time near $q$. 

Let $f$ be a transitive Anosov diffeomorphism of a compact manifold $M$, with topological entropy $h=\htop(f)$ and expansivity constant $3\eta$.  Let $C$ be the constant from the shadowing lemma. For any $\eta>0$  smaller than the expansivity constant for $f$,  let $L =  L(f, \eta)$ be a constant so that for every $n$,
\begin{equation}\label{eqn:Lambda-upper-bound}
\Lambda^{\mathrm{sep}}_n(0,\eta; f) \leq  L e^{nh}.
\end{equation} 
This is possible by \cite[Lemma 3]{Bow75}. Let $g\colon M\to M$ be a diffeomorphism with $d_{C^0}(f,g) < \eta/C$.  Given a fixed point $q$ of $f$ and a scale $\rho \in (0,3\eta)$, let 
$\chi_q$ be the indicator function of $M\setminus B(q,\rho)$, and 
 consider the following collection of orbit segments for $g$:
\[
\CCC = \CCC(g,q,r) = \{(x,n) \in M\times \NN : S_n^g \chi_q(x) < rn \},
\]
The following estimates are proved in \cite[Theorem 3.3]{CFT_BV}.

\begin{thm} \label{coreestimateallscales}
Under the assumptions above, we have the inequality 
\begin{equation} \label{eqn:hC}
h(\CCC,6\eta;g) \leq r(h + \log L) +H(2r),
\end{equation}
where $H(t) = -t\log t - (1-t)\log(1-t)$.  Moreover, given $\ph\colon M\to\RR$ continuous and $\delta>0$, we have
\begin{equation} \label{eqn:PC}
P(\CCC,\ph,\delta;g) \leq (1-r) \sup_{B(q, \rho)}\ph + r \sup_{M}\ph + h(\CCC,\delta;g),
\end{equation}
and thus it follows that
\[
P(\CCC, \ph; g) \leq  h_g^\ast(6 \eta) +(1-r) \sup_{B(q, \rho)} \ph + r( \sup_{M}\ph + h + \log L ) + H(2r).
\]

\end{thm}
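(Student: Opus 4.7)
The plan is to establish the three ingredients in sequence: the pressure bound \eqref{eqn:PC}, then the entropy bound \eqref{eqn:hC}, and finally their combination.

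For \eqref{eqn:PC}, I would start by splitting the Birkhoff sum of $\ph$ along an orbit segment $(x,n) \in \CCC$ according to whether $g^k x$ lies in $B(q,\rho)$. Letting $\alpha n = \#\{ 0 \le k < n : g^k x \in B(q,\rho)\}$, the definition of $\CCC$ gives $\alpha > 1-r$, and
\[
S_n\ph(x) \le \alpha n \sup_{B(q,\rho)}\ph + (1-\alpha) n \sup_M \ph.
\]
Since $\sup_{B(q,\rho)}\ph \le \sup_M\ph$, the right-hand side is monotone in $1-\alpha$ and is therefore bounded uniformly on $\CCC_n$ by $n[(1-r)\sup_{B(q,\rho)}\ph + r \sup_M \ph]$. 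Pulling this factor out of the sum defining $\Lambda^{\mathrm{sep}}_n(\CCC,\ph,\delta;g)$ and taking $\tfrac1n\log\limsup$ gives \eqref{eqn:PC}.

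For \eqref{eqn:hC}, let $E\subset \CCC_n$ be $(n, 6\eta; g)$-separated. The Anosov shadowing lemma (Lemma \ref{shadowinglemma}) assigns to each $x\in E$ an $f$-orbit $y = y(x)$ with $d(f^k y, g^k x) < \eta$ on $[0,n)$; since $3\eta$ is an expansivity constant, distinct $x, x' \in E$ yield $(n, 4\eta; f)$-separated shadowing points. I would then classify the points of $E$ by the set $A(x) = \{0 \le k < n : g^k x \notin B(q,\rho)\}$; the defining inequality $(x,n)\in \CCC$ gives $|A(x)| < rn$, and a standard combinatorial estimate bounds the number of possible $A$'s by $e^{nH(2r)}$, where the extra factor of $2$ absorbs the polynomial prefactor in the binomial inequality $\sum_{k< rn}\binom{n}{k} \le (rn+1)\binom{n}{\lceil rn\rceil}$. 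For each fixed $A$, the corresponding shadowing orbits $y(x)$ are concentrated near $q$ at all times $k \notin A$, which, via a careful bookkeeping exploiting the uniform hyperbolicity of $f$ and the bound \eqref{eqn:Lambda-upper-bound} applied to the restriction of the orbit to the times in $A$, gives at most $L\, e^{|A|h} \le L\, e^{rnh}$ separated shadowing orbits per fixed $A$. Multiplying the two counts gives $|E| \le L\, e^{n[rh + H(2r)]}$, and \eqref{eqn:hC} follows after taking $\tfrac1n\log\limsup$.

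The concluding pressure inequality is then obtained by combining \eqref{eqn:PC} and \eqref{eqn:hC} and letting $\delta \to 0$. The single subtlety is that \eqref{eqn:hC} is established at the fixed scale $6\eta$ while $h(\CCC, \delta; g)$ in \eqref{eqn:PC} must be taken at arbitrarily small $\delta$; this gap is bridged by the tail entropy $h_g^*(6\eta)$, whose definition in \eqref{eqn:Bowen-tail-entropy} controls the exponential growth rate of the number of Bowen balls of radius $\delta$ needed to refine a Bowen ball of radius $6\eta$. Thus $h(\CCC, \delta; g) \le h(\CCC, 6\eta; g) + h_g^*(6\eta) + o(1)$ as $\delta \to 0$, and substituting this into the combined estimate yields the claim.

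The main obstacle is clearly \eqref{eqn:hC}. The shape $r(h + \log L) + H(2r)$ encodes the heuristic that outside times carry the full topological-entropy cost per step while inside times cost only the combinatorial price of specifying their positions in $\{0, \ldots, n-1\}$. Turning this heuristic into a rigorous count requires balancing three ingredients: the Anosov shadowing lemma to transfer the problem from $g$ to $f$, localization near the fixed point $q$ during inside intervals to prevent shadowing orbits from picking up additional separation there, and Bowen's estimate \eqref{eqn:Lambda-upper-bound} applied only to the restriction of the orbit to the outside indices $A$. The remaining pieces are essentially bookkeeping once this entropy estimate is in hand.
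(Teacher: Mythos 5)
The paper does not prove this theorem here; it cites \cite[Theorem 3.3]{CFT_BV}, and your outline follows the same strategy as that proof (split the Birkhoff sum for \eqref{eqn:PC}, transfer $(n,6\eta;g)$-separated sets to $(n,4\eta;f)$-separated shadowing orbits, count itineraries, and bridge scales with the tail entropy $h_g^*(6\eta)$). The part of \eqref{eqn:PC} and the final scale-bridging step are fine. However, the key counting step behind \eqref{eqn:hC} has a genuine gap. Bowen's estimate \eqref{eqn:Lambda-upper-bound} applies to a \emph{consecutive} block of times, not to an arbitrary set $A$ of outside times. The times in $A$ decompose into maximal excursions, and applying \eqref{eqn:Lambda-upper-bound} to each excursion of length $j_i$ gives a factor $L e^{j_i h}$ \emph{per excursion}; since there can be on the order of $rn$ excursions (e.g.\ all of length one), the product is $L^{\# \text{excursions}} e^{|A|h} \le L^{rn}e^{rnh}$, and the $L^{rn}$ is a genuine exponential contribution --- this is exactly where the $r\log L$ term in the theorem comes from. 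Your claimed per-$A$ bound $L\,e^{|A|h}$ with a single factor of $L$, and hence your final bound $|E| \le L\,e^{n[rh + H(2r)]}$, are strictly stronger than the theorem and are not obtainable this way (for $rn$ isolated excursions the number of admissible $\eta$-separated choices per excursion is a constant that only $L$ controls, so it really multiplies). The conclusion survives once this is corrected, since $e^{nH(2r)} L^{rn} e^{rnh}$ gives precisely $r(h+\log L) + H(2r)$, but as written the central step fails.

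Two further points. First, the reduction of $(n,4\eta;f)$-separation to separation at the times in $A$ is not automatic: since $\rho<3\eta$, the ball $B(q,\rho+\eta)$ has diameter up to $8\eta$, so two shadowing orbits with the same $A$ can be $4\eta$-separated at an \emph{inside} time. One must use the local hyperbolicity of $f$ at the fixed point $q$ to show that separation during a block trapped near $q$ propagates (up to a bounded constant) to the entry or exit time of that block, i.e.\ to a time adjacent to $A$; you name ``localization near $q$'' as an ingredient but give no argument, and this is precisely the nontrivial bookkeeping. Second, your explanation of the $H(2r)$ is off: polynomial prefactors never force passing from $H(r)$ to $H(2r)$ (they vanish under $\frac1n\log$); in the cited proof $H(2r)$ arises because one counts the at most $2rn$ \emph{endpoints} of the excursion intervals among $n$ possible times. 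Counting subsets $A$ directly, as you do, gives $e^{nH(r)}$, which suffices when $r$ is small (where $H(r)\le H(2r)$), but your stated inequality and its justification should be repaired accordingly.
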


\subsection{Obstructions to expansivity}\label{sec:obstr-exp}

Let $g$ be as in the previous section, and suppose that the following property \ref{E} holds.  
\begin{enumerate}[label=\textbf{[\Alph{*}{]}}]
\setcounter{enumi}{4}
\item\label{E} there exist $\epsilon >0$, $r>0$, and a fixed point $q$ such that for $x \in M$, if there exists a sequence $n_k\to\infty$ with $\frac{1}{n_k}S^g_{n_k}\chi_q(x) \geq r$, then $\Gamma_\eps(x)=\{x\}$.
\end{enumerate}

Then $\CCC = \CCC(r)$ from above has the following property, which is proved in \cite[Theorem 3.4]{CFT_BV}.

\begin{thm} \label{expansivityestimate}
Under the above assumptions, we have the pressure estimate $\Pexp(\ph,\eps) \leq P(\CCC(q,r),\ph)$.
\end{thm}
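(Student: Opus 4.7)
The plan is to take an arbitrary ergodic measure witnessing $\Pexp(\ph,\epsilon)$, argue from assumption \ref{E} that it is essentially supported on initial points of orbit segments in $\CCC$, and then conclude by a standard Katok-type argument that its free energy is bounded by $P(\CCC,\ph;g)$. By the second expression in Definition \ref{almostexpansive}, it suffices to prove that every ergodic $\mu$ with $\mu(\mathrm{NE}(\epsilon))=1$ satisfies
\[
h_\mu(g)+\int\ph\,d\mu \leq P(\CCC,\ph;g).
\]

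Fix such a $\mu$. For $\mu$-a.e.\ $x$ we have $\Gamma_\epsilon(x)\neq\{x\}$, so the contrapositive of \ref{E} rules out any sequence $n_k\to\infty$ with $\frac{1}{n_k}S^g_{n_k}\chi_q(x)\geq r$; equivalently, there exists $N(x)\in\NN$ such that $\frac{1}{n}S^g_n\chi_q(x)<r$ for all $n\geq N(x)$, i.e.\ $(x,n)\in\CCC$ for every $n\geq N(x)$. Setting $B_N:=\{x:(x,n)\in\CCC\text{ for all }n\geq N\}$, the sets $B_N$ are nested and have union of full $\mu$-measure, so there is $N_0$ with $\mu(B_{N_0})\geq 1/2$. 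Since $B_{N_0}\subset\CCC_n$ for each $n\geq N_0$, we obtain the uniform lower bound $\mu(\CCC_n)\geq 1/2$ for all such $n$.

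For each $\delta>0$ and each $n\geq N_0$, pick a maximal $(n,\delta)$-separated set $E_n\subset\CCC_n$. Maximality forces $\{B_n(x,\delta):x\in E_n\}$ to cover $\CCC_n$, so $\mu\bigl(\bigcup_{x\in E_n}B_n(x,\delta)\bigr)\geq 1/2$. Applying the Katok pressure formula with threshold $\alpha=1/2$, which expresses $h_\mu(g)+\int\ph\,d\mu$ as the $\lim_{\delta\to 0}\limsup_n \tfrac{1}{n}\log$ of the infimum of $\sum_{x\in F}e^{S_n\ph(x)}$ over sets $F$ whose $(n,\delta)$-Bowen balls cover a set of $\mu$-measure $\geq 1/2$, the sets $E_n$ are admissible, hence
\[
h_\mu(g)+\int\ph\,d\mu \leq \lim_{\delta\to 0}\limsup_{n\to\infty}\frac{1}{n}\log\sum_{x\in E_n}e^{S_n\ph(x)} \leq \lim_{\delta\to 0}P(\CCC,\ph,\delta;g) = P(\CCC,\ph;g),
\]
using $\sum_{x\in E_n}e^{S_n\ph(x)}\leq \Lambda_n^{\mathrm{sep}}(\CCC,\ph,\delta;g)$ by $(n,\delta)$-separation of $E_n$ inside $\CCC_n$. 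Taking the supremum over all such $\mu$ completes the proof.

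The only non-trivial ingredient is the Katok-type pressure formula applied along the sequence $\CCC_n$ rather than the whole space; the verification that the standard ergodic-theoretic argument goes through with a restricted cover and a non-trivial potential is the sole technical input. Everything else is a mechanical unwinding of \ref{E} via contrapositive, combined with the monotone exhaustion $\mu(B_N)\nearrow 1$ which converts a pointwise "eventually in $\CCC$" statement into a uniform $n$-independent measure lower bound on $\CCC_n$.
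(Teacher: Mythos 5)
Your proposal is correct and follows essentially the same route as the proof the paper relies on (cited from [CFT\_BV, Theorem 3.4]): use the contrapositive of \ref{E} to show that every non-expansive point at scale $\eps$ lies in $A^+$, so an ergodic $\mu$ with $\mu(\mathrm{NE}(\eps))=1$ charges $A^+$, and then bound $h_\mu(g)+\int\ph\,d\mu$ by $P(\CCC,\ph)$ via separated sets in $\CCC_n$ and the pressure version of Katok's formula. In effect you have reproved the intermediate Lemma \ref{keystepexpansivityestimate} (with the harmless threshold $1/2$ coming from the exhaustion by the sets $B_N$) rather than invoking it, with the Katok-type pressure inequality as the one citable ingredient, exactly as in the cited argument.
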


Let $\chi=\chi_q$ and $\CCC=\CCC(q,r;g)$.  Consider the set
$$
A^+ = \{x : \text{there exists } K(x) \text{ so } \tfrac{1}{n}S^g_{n}\chi(x) < r \text{ for all } n>K(x)\}
$$
The next lemma, which we need in \S \ref{s.srb}, is proved as \cite[Lemma 3.5]{CFT_BV}  as an intermediate step in the proof of Theorem \ref{expansivityestimate}.  

\begin{lem} \label{keystepexpansivityestimate}
Let $\mu \in \mathcal{M}_e(g)$. If $\mu(A^+)>0$, then 
$h_{\mu}(g)+\int \ph\, d \mu \leq P(\CCC, \ph)$.
\end{lem}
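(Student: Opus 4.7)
The plan is to combine the Brin--Katok local entropy formula with Birkhoff's ergodic theorem to produce $(n,\epsilon)$-separated sets inside $\CCC_n$ whose exponential partition sums lower-bound the free energy $h_\mu(g) + \int\ph\,d\mu$. Writing $A^+ = \bigcup_{K\in\NN} A^+_K$ for $A^+_K := \{x : \tfrac{1}{n}S_n^g\chi(x) < r \text{ for all } n\geq K\}$, an increasing union, the hypothesis $\mu(A^+)>0$ yields some fixed $K$ with $\mu(A^+_K) > 0$. The crucial observation is that $A^+_K \subseteq \CCC_n$ for every $n\geq K$, so any $(n,\epsilon)$-separated subset of $A^+_K$ is an admissible competitor in $\Lambda^{\mathrm{sep}}_n(\CCC,\ph,\epsilon;g)$.

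Next I would fix $\delta>0$ and a scale $\epsilon>0$ small enough that the Brin--Katok formula applies: for $\mu$-a.e.\ $x$, $\liminf_{n}(-\tfrac{1}{n}\log\mu(B_n(x,\epsilon))) \geq h_\mu(g)-\delta$, and by Birkhoff, $\mu$-a.e.\ $x$ satisfies $|S_n\ph(x) - n\int\ph\,d\mu|<n\delta$ for all large $n$. Intersecting the corresponding full-measure sets with $A^+_K$ and applying Egorov's theorem gives an $N\geq K$ and a measurable subset $A^*\subseteq A^+_K$ with $\mu(A^*)\geq \mu(A^+_K)/2$ on which $\mu(B_n(x,\epsilon))\leq e^{-n(h_\mu(g)-2\delta)}$ and $|S_n\ph(x) - n\int\ph\,d\mu|<n\delta$ hold uniformly for every $n\geq N$.

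For each $n\geq N$, take a maximal $(n,\epsilon)$-separated set $E_n\subseteq A^*$. Maximality forces $A^*\subseteq \bigcup_{x\in E_n} B_n(x,\epsilon)$, so summing the Brin--Katok estimate gives $|E_n|\geq \mu(A^*)\,e^{n(h_\mu(g)-2\delta)}$, and combined with the Birkhoff control on $S_n\ph$,
\[
\Lambda^{\mathrm{sep}}_n(\CCC,\ph,\epsilon;g) \;\geq\; \sum_{x\in E_n} e^{S_n\ph(x)} \;\geq\; \mu(A^*)\, e^{n(h_\mu(g)+\int\ph\,d\mu - 3\delta)}.
\]
Taking $\tfrac{1}{n}\log$, sending $n\to\infty$ (so that the constant $\tfrac{1}{n}\log\mu(A^*)$ vanishes), then $\epsilon\to 0$ and $\delta\to 0$, delivers $h_\mu(g)+\int\ph\,d\mu \leq P(\CCC,\ph;g)$. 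The main delicate point is the compatibility of scales: Brin--Katok gives a measure estimate at a given scale $\epsilon$ while the covering property of a maximal $(n,\epsilon)$-separated set is at the \emph{same} $\epsilon$, so $\epsilon$ must be fixed once up front before invoking Brin--Katok; apart from this, the argument is a direct combination of the definitions of $\CCC$ and $\Lambda^{\mathrm{sep}}_n$ with classical ergodic-theoretic inputs.
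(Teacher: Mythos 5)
Your argument is correct and follows essentially the same route as the paper's source for this statement (the paper defers the proof to Lemma 3.5 of \cite{CFT_BV}): decompose $A^+$ into the nested sets $A_K^+$, observe $A_K^+\subset\CCC_n$ for $n\geq K$, and combine Birkhoff with an entropy-counting estimate on a positive-measure subset to lower bound $\Lambda^{\mathrm{sep}}_n(\CCC,\ph,\epsilon;g)$, the only cosmetic difference being that you invoke Brin--Katok plus maximal separated sets where the original uses Katok's entropy formula. One small wording fix: at a fixed $\epsilon$ the Brin--Katok bound need not hold $\mu$-a.e., only on a set whose measure tends to $1$ as $\epsilon\to0$, but since you only retain a subset of $A_K^+$ of at least half its measure this does not affect the argument.
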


\subsection{Cone estimates and local product structure}\label{sec:torus}

Let $F^1,F^2 \subset \RR^d$ be subspaces such that $F^1 \cap F^2 = \{0\}.$ Let
$\measuredangle(F^1,F^2) := \min\{\measuredangle(v,w) \, :\,  v\in F^1, w\in F^2\}$, and define
\begin{equation}\label{eqn:barkappa}
\bk(F^1,F^2) :=  (\sin\measuredangle(F^1,F^2))^{-1} \geq 1.
\end{equation}
Given $\beta \in (0,1)$ and $F^1,F^2 \subset \RR^d$, the \emph{$\beta$-cone of $F^1$ and $F^2$} is 
$$
C_\beta(F^1,F^2) = \{ v+w \, :\,  v\in F^1, w\in F^2, \|w\| < \beta \|v\| \}.
$$
The following two useful lemmas are proved in \S 8 of \cite{CFT_BV}.
\begin{lem}\label{lem:Wlps}
Let $W^1,W^2$ be any foliations of $F^1 \oplus F^2$ with $C^1$ leaves such that $T_x W^1(x) \subset C_\beta(F^1,F^2)$ and $T_x W^2(x) \subset C_\beta(F^2,F^1)$, and let $\bk = \bk(F^1,F^2)$.  Then for every $x,y\in F^1 \oplus F^2$ the intersection $W^1(x) \cap W^2(y)$ consists of a single point $z$.  Moreover, 
\[
\max\{d_{W^1}(x,z), d_{W^2}(y,z)\} \leq \frac{1+\beta}{1-\beta} \bk d(x,y).
\]
\end{lem}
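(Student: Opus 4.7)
The plan is to first express each leaf as a graph over the corresponding subspace using the cone hypothesis, then reformulate the intersection $W^1(x)\cap W^2(y)$ as a fixed-point problem for a contracting map, and finally derive the quantitative bound by combining the standard Banach estimate with the operator norms of the projections along $F^1,F^2$.

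First I would set up graph representations. The cone $C_\beta(F^1,F^2)$ contains no nonzero vector of $F^2$, so $T_p W^1(p)$ is transverse to $F^2$ for every $p$, which means the projection $\pi_1\colon F^1\oplus F^2\to F^1$ (along $F^2$) restricts to a local $C^1$-diffeomorphism from each leaf of $W^1$ onto $F^1$; a standard completeness/covering argument then upgrades this to a global diffeomorphism, so that $W^1(x)$ is the graph of a $C^1$ map $h^1_x\colon F^1\to F^2$. Writing a tangent vector to the graph as $(v,(Dh^1_x)_\xi v)$ and invoking the cone condition gives $\|(Dh^1_x)_\xi\|_{\mathrm{op}}<\beta$, hence $h^1_x$ is $\beta$-Lipschitz. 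The symmetric argument shows $W^2(y)$ is the graph of a $\beta$-Lipschitz $C^1$ map $h^2_y\colon F^2\to F^1$.

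Next I would establish existence and uniqueness. A point $z$ with $\xi=\pi_1(z)$, $\eta=\pi_2(z)$ lies in $W^1(x)\cap W^2(y)$ iff $\eta=h^1_x(\xi)$ and $\xi=h^2_y(\eta)$. Eliminating $\eta$, the coordinate $\xi$ must be a fixed point of $\Phi:=h^2_y\circ h^1_x\colon F^1\to F^1$. Since both factors are $\beta$-Lipschitz, $\Phi$ is $\beta^2$-Lipschitz and thus a strict contraction on the Banach space $F^1$, giving a unique fixed point $\xi$ and hence a unique intersection point $z$.

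For the distance bound, set $\xi_0=\pi_1(x)$, so $x=\xi_0+h^1_x(\xi_0)$, and observe that $\pi_1(y)=h^2_y(\pi_2(y))$ because $y\in W^2(y)$. Using the bound $\|\pi_i\|\le\bk$ (which is exactly $1/\sin\measuredangle(F^1,F^2)$), I would estimate
\[
\|\xi_0-\Phi(\xi_0)\| \le \|\xi_0-\pi_1(y)\| + \|h^2_y(\pi_2(y))-h^2_y(h^1_x(\xi_0))\|
\le \bk\,d(x,y) + \beta\bk\,d(x,y).
\]
The Banach contraction estimate $\|\xi_0-\xi\|\le(1-\beta^2)^{-1}\|\xi_0-\Phi(\xi_0)\|$ then yields $\|\xi_0-\xi\|\le\tfrac{\bk}{1-\beta}d(x,y)$. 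Finally, the straight segment $t\mapsto \xi_0+t(\xi-\xi_0)$ in $F^1$ lifts to a curve $t\mapsto (\xi_0+t(\xi-\xi_0))+h^1_x(\xi_0+t(\xi-\xi_0))$ in $W^1(x)$ whose derivative has norm at most $(1+\beta)\|\xi-\xi_0\|$, so $d_{W^1}(x,z)\le(1+\beta)\|\xi-\xi_0\|\le\tfrac{1+\beta}{1-\beta}\bk\,d(x,y)$. The bound on $d_{W^2}(y,z)$ follows from the symmetric argument, iterating $\eta\mapsto h^1_x(h^2_y(\eta))$ starting from $\eta_0=\pi_2(y)$.

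The step I expect to be most delicate is the first one, namely upgrading the local graph representation coming from the cone condition to a global one; this uses properness of $\pi_i$ restricted to leaves, which must be justified from the hypothesis that $W^1,W^2$ are foliations of the whole space $F^1\oplus F^2$ (equivalently, that the leaves are complete). Once the graph representations are in hand, the remainder of the argument is a fairly mechanical combination of the contraction mapping theorem with the Lipschitz and projection-norm bounds.
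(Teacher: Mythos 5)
Your argument is correct and is essentially the approach used in the source this paper cites for the lemma (\cite{CFT_BV}, \S 8): there too the cone hypothesis is used to realize each leaf as the graph of a $\beta$-Lipschitz map over $F^1$ resp.\ $F^2$, and the unique intersection point together with the $\frac{1+\beta}{1-\beta}\bk\,d(x,y)$ bound is extracted from that graph description, your contraction-mapping formulation via $h^2_y\circ h^1_x$ being an equivalent packaging of the same estimates. You were also right to flag the globalization of the graph property as the delicate step: completeness of the leaves in the leaf metric is what lets the local diffeomorphism $\pi_1|_{W^1(x)}$ lift paths, hence be a covering of the simply connected $F^1$ and so a global diffeomorphism.
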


\begin{lem}\label{compare:dist}
Under the assumptions of Lemma \ref{lem:Wlps}, suppose that $x, y$ are points belonging to the same local leaf of $W\in \{W^1, W^2\}$. Then
\[
d(x,y) \leq d_W(x,y) \leq (1+\beta)^2 d(x,y).
\]
\end{lem}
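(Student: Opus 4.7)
Proof proposal. The inequality $d(x,y) \le d_W(x,y)$ is immediate: in Euclidean space, the distance $d(x,y)$ equals the length of the straight-line segment joining $x$ and $y$, and this is a lower bound on the length of every $C^1$ path joining them. Since $d_W(x,y)$ is defined as the infimum over the restricted class of $C^1$ paths constrained to lie on the single leaf $W(x)$, it is bounded below by the unconstrained infimum $d(x,y)$.

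For the upper bound I would use a graph parametrization of the leaf. Without loss of generality take $W = W^1$, so tangent spaces to $W$ lie in the cone $C_\beta(F^1,F^2)$. This cone condition implies that locally near $x$ the leaf $W(x)$ is the image of an open neighborhood $U \subset F^1$ of $u_x$ under a map $u \mapsto u + \phi(u)$, where $\phi \colon U \to F^2$ is $C^1$ with $\|D\phi(u)\|_{\mathrm{op}} \le \beta$; in particular $\phi$ is $\beta$-Lipschitz. Writing $x = u_x + \phi(u_x)$ and $y = u_y + \phi(u_y)$, I would connect $x$ and $y$ by the lifted path $\gamma(t) = u(t) + \phi(u(t))$ with $u(t) = u_x + t(u_y - u_x)$, $t \in [0,1]$. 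Differentiating gives $\|\gamma'(t)\| \le \bigl(1+\|D\phi(u(t))\|_{\mathrm{op}}\bigr)\|u_y - u_x\| \le (1+\beta)\|u_y - u_x\|$, so
\[
d_W(x,y) \le \mathrm{length}(\gamma) \le (1+\beta)\|u_y - u_x\|.
\]

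To complete the estimate I would bound $\|u_y - u_x\|$ by $(1+\beta)\|x - y\|$. Writing $x - y = (u_x - u_y) + (\phi(u_x) - \phi(u_y))$ with summands in $F^1$ and $F^2$, and using the $\beta$-Lipschitz bound on $\phi$ together with the geometry of the $\beta$-cones that encodes how $F^1$ and $F^2$ fit inside the ambient Euclidean space, one obtains the required inequality. Combining the two steps yields $d_W(x,y) \le (1+\beta)^2 d(x,y)$, as claimed.

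The step I expect to be the main obstacle is the sharp estimate $\|u_y - u_x\| \le (1+\beta)\|x-y\|$: a naive reverse-triangle-inequality argument, using only $\|\phi(u_x)-\phi(u_y)\| \le \beta \|u_x - u_y\|$, yields $\|u_y - u_x\| \le \|x-y\|/(1-\beta)$, which is weaker than $(1+\beta)$. Obtaining the cleaner constant $(1+\beta)$ requires exploiting the cone geometry more carefully — for instance through the integral representation $\phi(u_x) - \phi(u_y) = \int_0^1 D\phi\bigl(u_y + s(u_x - u_y)\bigr)(u_x - u_y)\,ds$ combined with the precise Euclidean decomposition of $x - y$ afforded by the transverse splitting $F^1 \oplus F^2$.
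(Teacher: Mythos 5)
Your first two steps are sound: $d(x,y)\le d_W(x,y)$ is immediate, and the graph path over the straight segment gives $d_W(x,y)\le(1+\beta)\|u_x-u_y\|$ once the local leaf is written as a graph $u\mapsto u+\phi(u)$ with $\|D\phi\|\le\beta$. The gap is exactly the step you flag, and it is not merely a technical obstacle: the inequality $\|u_x-u_y\|\le(1+\beta)\|x-y\|$ is false in general. It concerns a single vector decomposition $x-y=v+w$ with $v\in F^1$, $w\in F^2$, $\|w\|\le\beta\|v\|$, and the cone hypothesis gives no information about such a pair beyond the bound $\|w\|\le\beta\|v\|$ you have already used, so neither the integral representation of $\phi$ nor ``finer cone geometry'' can help. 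Concretely, let $f\in F^2$ be a unit vector making a small angle $\theta$ with $F^1$, and let the leaves of $W^1$ be the straight lines parallel to $v_0-\beta' f$, with $v_0\in F^1$ a unit vector and $\beta'<\beta$ close to $\beta$; all hypotheses of Lemma~\ref{lem:Wlps} hold. For $x,y$ on one leaf with $u_x-u_y=v_0$ we get $\|x-y\|=\|v_0-\beta' f\|\to 1-\beta'$ as $\theta\to 0$, so $(1+\beta)\|x-y\|\approx 1-\beta^2<1=\|u_x-u_y\|$. (Here the lemma itself is harmless, since the leaf is a line and $d_W=d$; it is your intermediate inequality that breaks.) Consequently any argument that decouples the two estimates pointwise can do no better than the factor $\tfrac{1+\beta}{1-\beta}$, which exceeds $(1+\beta)^2$.

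The missing idea is to exploit cancellation along the path rather than two separate pointwise bounds. For instance, parametrize the graph path on $[0,1]$, so $\gamma'(t)=v_0+w(t)$ with $v_0=u_y-u_x$ and $w(t)=D\phi(u(t))v_0\in F^2$, $\|w(t)\|\le\beta\|v_0\|$; set $z=y-x=v_0+\bar w$, $\bar w=\int_0^1 w$, and note $\int_0^1(\gamma'(t)-z)\,dt=0$. Cauchy--Schwarz gives $\mathrm{length}(\gamma)^2\le\int_0^1\|\gamma'\|^2\,dt=\|z\|^2+\int_0^1\|w-\bar w\|^2\,dt\le\|z\|^2+(\beta^2-m^2)\|v_0\|^2$, where $m\|v_0\|=\|\bar w\|$, and together with $\|z\|\ge(1-m)\|v_0\|$ this yields $d_W(x,y)\le(1-\beta^2)^{-1/2}\,d(x,y)$ (the worst case is $m=\beta^2$), which is at most $(1+\beta)^2 d(x,y)$ for all $\beta\le 2/3$, in particular in the small-$\beta$ regime used throughout the paper. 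Some restriction on $\beta$ is genuinely needed for an angle-independent constant: for $\beta$ near $1$ and $F^1,F^2$ nearly tangent, a zig-zag leaf drifting along a direction of $F^2$ almost parallel to $F^1$ realizes ratios close to $(1-\beta^2)^{-1/2}>(1+\beta)^2$. Finally, note that the paper does not prove this lemma in the text; it is quoted from \S 8 of \cite{CFT_BV}, so your proposal cannot be matched against an in-paper argument, but as written it has a genuine gap at the decisive step.
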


\section{Construction of Ma\~n\'e's examples}\label{s.constructions}

We review the class of robustly transitive diffeomorphisms originally considered by Ma\~{n}\'{e}~\cite{man78}.
Fix $d\geq3$ and let $f_A$ be the hyperbolic automorphism of $\mathbb{T}^d$ determined by a matrix $A\in\mathrm{SL}(d,\mathbb{Z})$ with  all eigenvalues real, positive, simple, and irrational and only one eigenvalue outside the unit circle.
Let $\lambda_u$ be the unique eigenvalue greater than $1$ and $\lambda_s<1$ be the largest
of the other eigenvalues. Let $h=\htop(f_A)$ be the topological entropy. 

The Ma\~{n}\'{e} class of examples are $C^0$ perturbations of $f_A$, which we will denote by $f_M$.  We describe the construction below. We are careful about issues of scale to guarantee that we have local product structure at a scale which is `compatible' with the $C^0$ size of the perturbation.  

Fix an expansivity constant $3\eta$ for $f_M$.  We require that $\eta$ is small enough so that calculations at scales which are a suitable multiple of $\eta$ are local: a necessary upper bound on $\eta$ can be computed explicitly, depending on basic properties of the map $f_M$. 
 Let $q$ be a fixed point for $f_A$, and fix $0< \rho < 3\eta$.  We carry out a perturbation in a $\rho$-neighborhood of $q$.

Let $F^u,F^c,F^s \subset \RR^d$ be the eigenspaces corresponding to (respectively) $\lambda_u$, $\lambda_s$, and all eigenvalues smaller than $\lambda_s$, and let $F^{cs} = F^c\oplus F^s$.   
Let $\kappa = 2\bk(F^s,F^u)$, where $\bk$ is as in \eqref{eqn:barkappa}.

Let $\mathcal{F}^{u,c,s}$ be the foliations of $\TT^d$ by leaves parallel to $F^{u,c,s}$.  
These leaves 
are dense in $\mathbb{T}^d$ since all eigenvalues are irrational. 
Let 
$\beta\in (0, \rho)$ be sufficiently small and
consider the cones
\begin{alignat*}{3}
C_\beta^s &= C_\beta(F^s, F^{cu}), &\qquad
C_\beta^c &= C_\beta(F^c, F^s \oplus F^u), \\
C_\beta^u &= C_\beta(F^u, F^{cs}), &
C_\beta^{cs} &= C_\beta(F^{cs},F^u).
\end{alignat*}

\begin{figure}[htb]
\begin{center}
\ifps
\psfrag{A}{$f_A$}
\psfrag{B}{$f_M$}
\psfrag{q}{$q$}
\psfrag{r}{$q_1$}
\psfrag{s}{$q$}
\psfrag{t}{$q_2$}
\includegraphics[width=.6\textwidth]{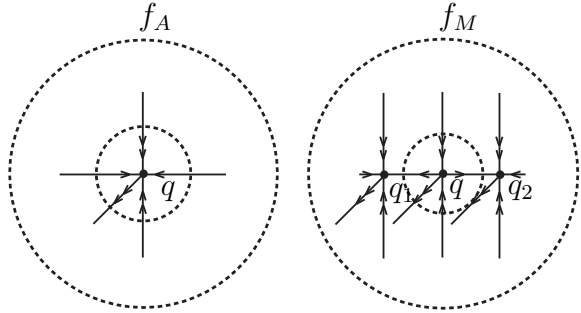}
\fi
\caption{Ma\~{n}\'{e}'s construction}\label{f.mane}
\end{center}
\end{figure}

Outside of $B(q,\rho)$, set $f_M$ to be equal to $f_A$. Inside $B(q,\rho)$, the fixed point $q$ undergoes a pitchfork bifurcation in the direction of $\mathcal{F}^c$; see \cite{man78} for details.  
The perturbation is carried out so that 
\begin{itemize}
\item $\mathcal{F}^c$ is still an invariant foliation for $f_M$, and we write $E^c = T\mathcal{F}^c$;
\item 
the cones $C_\beta^u$ and $C_{\beta}^s$ are invariant and uniformly expanding under $Df_M$ and $Df_M^{-1}$, respectively; in particular, they contain $Df_M$-invariant distributions $E^s$ and $E^u$ that integrate to $f_M$-invariant foliations $W^s$ and $W^u$.
\item $E^{cs}:= E^c \oplus E^s$ integrates to a foliation $W^{cs}$. This holds because $E^s \subset C_{\beta}^s$ guarantees that $E^{cs} \subset C_\beta^{cs}$.
\end{itemize} 
Thus, $f_M$ is partially hyperbolic with $T\TT^d = E^s \oplus E^c \oplus E^u$ and $E^{cs}$ integrates to a foliation.  

The index of $q$ changes during the perturbation, and we may also assume that for any point in $\mathbb{T}^d\setminus B(q,\rho/2)$ the contraction in the direction $E^c$ is $\lambda_s$. 
Inside $B(q,\rho/2)$, the perturbed map experiences some weak expansion in the direction $E^c$, and two new fixed points are created on $W^c(q)$, see Figure~\ref{f.mane}. Let $\lambda= \lambda_{c}(f_M) >1$ be the greatest expansion which occurs in the center direction. We can carry out the construction so that $\lambda$ is arbitrarily close to $1$. 

Since $f$ contracts $E^{cs}$ by a factor of at least $\lambda_s$ outside $B(q,\rho/2)$, and expands it by at most $\lambda$ inside the ball, we can estimate $\|Df_M^n|_{E^{cs}(x)}\|$ by counting how many of the iterates $x, f(x), \dots, f^{n-1}(x)$ lie outside $B(q,\rho/2)$.  If at least $\second n$ of these iterates lie outside the ball, then
\begin{equation}\label{eqn:gamma-1}
\|Df_M^n|_{E^{cs}(x)}\| \leq \lambda_s^{\second n} \lambda^{(1-\second)n}.
\end{equation}
Thus we are interested in a value of $\second>0$ that gives $\lambda_s^\second \lambda^{1-\second} < 1.$ Consider the quantity
\[
\gamma = \gamma(f_M) =\frac{\ln \lambda }{\ln \lambda - \ln \lambda_s} >0.
\]
Then $\gamma \to 0$ as $\lambda \to 1$, and for $r>\gamma$ a simple calculation gives
\begin{equation}\label{theta}
\theta_r:= \lambda_s^\second \lambda^{1-\second} < 1.
\end{equation}

Given $\rho,\second>0$, we write $\FFF_{\rho,\second}$ for the set of Ma\~n\'e diffeomorphisms constructed as described here for which $\gamma(f_M)<r$. Thus, for $f \in \FFF_{\rho,\second}$, we have $\theta_r(f_M)<1$.

There is a constant $K$ so that we can carry out the construction to satisfy $d_{C^0}(f_M,f_A) < K\rho$, $f_A(B(q,\rho)) \subset B(q, K \rho)$, and $f_M(B(q,\rho)) \subset B(q, K \rho)$.
In particular, by choosing $\rho$ small, we can ensure that  $d_{C^0}(f_M, f_A) < \eta/C$ where $C=C(f_A)$ is the constant from the Shadowing Lemma. 

We now consider diffeomorphisms $g$ in a $C^1$ neighborhood of $f_M$.  
For sufficiently small $C^1$ perturbations $g$ of $f_M$, the following remain true.
\begin{itemize}
\item $d_{C^0}(g, f_A) < \eta/C$, where $C=C(f_A)$ is the constant from Lemma \ref{shadowinglemma}.
\item $g$ is partially hyperbolic with $T\TT^d = E^s_g \oplus E^c_g \oplus E^u_g$, where $E^\sigma_g \subset C_\beta^\sigma$ for each $\sigma\in \{s,c,u,cs\}$.
\item 
The distributions $E^c_g$ and $E^{cs}_g$ integrate to foliations $W^c_g$ and $W^{cs}_g$.
\item Each of the leaves $W^{cs}_g(x)$ and $W^u_g(x)$ is dense for every $x\in \TT^d$.
\end{itemize}

For the $C^1$ perturbations, partial hyperbolicity with $E_g^\sigma\subset C_\beta^\sigma$ and integrability are provided by \cite[Theorem 6.1]{HPS};
density of the leaves was shown in \cite{PS}.  Given $g$ as above, let
\begin{align*}
\lambda_c(g) &= \sup \{ \|Dg|_{E^c(x)}\| : x\in B(q,\rho/2)\}, \\
\lambda_s(g) &= \sup \{ \|Dg|_{E^c(x)}\| : x \in \TT^d \setminus B(q,\rho/2)\}, \\
\gamma (g)  &=\frac{\ln \lambda_c(g) }{\ln \lambda_c(g) - \ln \lambda_s(g)}.
\end{align*}
Let $\UUU_{\rho,\second}$ be the set of $C^1$ diffeomorphisms $g\colon \TT^d\to \TT^d$ satisfying the conditions in the list above with $\gamma(g)<r$. A simple calculation gives
\begin{equation}\label{eqn:gamma-r}
\theta_r(g) := \lambda_c(g)^{1-r}\lambda_s(g)^r < 1.
\end{equation}

\section{Proof of Theorem~\ref{t.mane}}\label{s.mane.pf}

We let $g \in \UUU_{\rho, \second}$, and consider the collection $\GGG$ of orbit segments $(x,n)$ for which $(x,i)$ spends at least $\gamma i$ iterates outside of $B(q,\rho)$ for all $i\leq n$. 
We will show that these orbit segments experience uniform contraction  in the $E^{cs}$ direction.  Using local product structure, 
this will allow us to prove specification and the Bowen property for such orbit segments. 
The hypothesis $\Psi(\rho,r,\varphi)< P(\varphi; g)$, together with Theorems \ref{coreestimateallscales} and \ref{expansivityestimate}, allow us to bound the pressure of obstructions to expansivity and specification away from $P(\ph; g)$.

\subsection{Local product structure}\label{lps}

We require local product structure for $g$ at scale $6 \eta$ repeatedly through this section.  This holds because the splitting for $g$ is contained in thin cone fields and so the local leaves are near the local leaves for $f_A$ when $\beta$ and $\eta$ are small.  
\begin{lem}\label{lem:lps-mane}
The diffeomorphism $g$ has local product structure for $W^{cs}_g,W^{u}_g$ at scale $6\eta$ with constant $\kappa = 2\bk(F^{s},F^u)$.
\end{lem}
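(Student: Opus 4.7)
The plan is to lift the foliations $W^{cs}_g$ and $W^u_g$ to the universal cover $\RR^d$, apply the abstract local product structure result Lemma \ref{lem:Wlps}, and then descend back to the torus. The essential input from the construction of $\UUU_{\rho,r}$ in \S\ref{s.constructions} is that $E^{cs}_g \subset C_\beta^{cs} = C_\beta(F^{cs}, F^u)$ and $E^u_g \subset C_\beta^u = C_\beta(F^u, F^{cs})$. Consequently, if $\tilde W^{cs}_g$ and $\tilde W^u_g$ denote the lifted foliations on $\RR^d$, their tangent distributions satisfy the hypothesis of Lemma \ref{lem:Wlps} with $F^1 = F^{cs}$ and $F^2 = F^u$, since lifting preserves tangent-cone containment.

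Given $x, y \in \TT^d$ with $d(x,y) < 6\eta$, I would choose lifts $\tilde x, \tilde y \in \RR^d$ that realize this distance. Lemma \ref{lem:Wlps} would then produce a unique point $\tilde z \in \tilde W^{cs}_g(\tilde x) \cap \tilde W^u_g(\tilde y)$ satisfying
\[
\max\{d_{\tilde W^{cs}}(\tilde x, \tilde z), d_{\tilde W^u}(\tilde y, \tilde z)\} \leq \tfrac{1+\beta}{1-\beta} \bk(F^{cs}, F^u) \, d(x, y).
\]
Setting $z = \pi(\tilde z)$ for $\pi\colon \RR^d \to \TT^d$ the covering projection, I would verify that $z$ is the unique point of $W^{cs}_{g, 6\eta \kappa}(x) \cap W^u_{g, 6\eta \kappa}(y)$. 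Passing from $\RR^d$-uniqueness to $\TT^d$-uniqueness requires that $\pi$ be injective on balls of radius $6\eta\kappa$, which is built into the choice of $\eta$ in \S\ref{s.constructions}: $\eta$ is taken small enough that calculations at scales which are a suitable multiple of $\eta$ are local.

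The main obstacle I foresee is verifying that the constant produced by Lemma \ref{lem:Wlps} is indeed bounded by $\kappa = 2\bk(F^s, F^u)$. This requires tracking the comparison between $\bk(F^{cs}, F^u)$ and $\bk(F^s, F^u)$ in terms of the eigenspace configuration of $A$, and using that $\beta$ has been chosen small enough that $\tfrac{1+\beta}{1-\beta}$ contributes only the factor of $2$. Once this constant bookkeeping is in hand and the injectivity of $\pi$ at scale $6\eta\kappa$ is invoked, the rest of the argument is essentially a direct application of Lemma \ref{lem:Wlps}.
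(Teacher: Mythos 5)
Your proposal is correct and follows essentially the same route as the paper: lift $W^{cs}_g,W^u_g$ to $\RR^d$, apply Lemma~\ref{lem:Wlps} with $F^1=F^{cs}$, $F^2=F^u$, and project back down, using that $\eta$ is chosen small enough that uniqueness of the intersection point survives at scale $6\eta\kappa$. The constant bookkeeping you flag as an obstacle is the only place where the paper is more cavalier---it simply states the leaf-distance bound as $\tfrac{1+\beta}{1-\beta}\bk(F^{s},F^u)\eps < 2\bk(F^{s},F^u)\eps$, in effect reading the $F^s$ in the definition of $\kappa$ as the full stable space $F^{cs}$ of $f_A$---so your worry concerns the paper's notation rather than a gap in your argument.
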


\begin{proof}
Let $\widetilde W^{cs}$ and $\widetilde W^{u}$ be the lifts of $W^{cs},W^{u}$ to $\RR^d$.  Given $x,y\in \TT^d$ with $\eps := d(x,y) < 6\eta$, let $\tilde x,\tilde y\in \RR^d$ be lifts of $x,y$ with $\eps=d(\tilde x,\tilde y)<6\eta$.  By Lemma \ref{lem:Wlps} the intersection $\widetilde W^{cs}(x) \cap \widetilde W^{u}(y)$ has a unique point $\tilde z$, which projects to $z\in \TT^d$.  Moreover, the leaf distances between $\tilde x,\tilde z$ and $\tilde y, \tilde z$ are at most $(\frac{1+\beta}{1-\beta}) \bk(F^{s},F^u)\eps$.  Since $\beta$ is small, this is less than $2\bk(F^{s},F^u) \epsilon$, so $z \in \tilde W^{cs}_{\kappa \eps}(x) \cap \tilde W^u_{\kappa \eps}(y)$.

{ By choosing $\eta$ not too large, we can ensure that $6 \eta \kappa$ is not too large relative to the diameter of $\TT^d$, so that the projection of $\widetilde W^{cs}_{6 \eta \kappa}(x) \cap \widetilde W^{u}_{6 \eta \kappa}(y)$ coincides with $W^{cs}_{6 \eta \kappa}(x) \cap W^{u}_{6 \eta \kappa}(y)$. Thus,  $z$ is the only point in this intersection.}
\end{proof}

\subsection{Specification}

A main ingredient for establishing specification for mixing locally maximal hyperbolic sets $f\colon \Lambda\to \Lambda$ is that given $\delta>0$, there exists $N\in\mathbb{N}$ such that for $x,y\in \Lambda$ and $n\geq N$ we have $f^n(W^u_{\delta}(x))\cap W^s_{\delta}(y)\neq \emptyset$.  We mimic this idea replacing the stable manifold with the centerstable manifolds. All leaves of $W^u$ are dense in $\TT^d$ by the definition of $\UUU_{\rho, \second}$. The following lemma gives uniform density.

\begin{lem}\label{lem:intersection}
For every $\delta>0$ there is $R>0$ such that for every $x,y\in \TT^d$, we have $W_R^u(x) \cap W_\delta^{cs}(y) \neq \emptyset$.
\end{lem}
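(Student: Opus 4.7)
\emph{Proof plan.} The plan is to combine density of $W^u$-leaves with local product structure and compactness. The key observation is that \emph{pointwise} density of unstable leaves, together with the open nature of ``coming within $\eps$'' of a point, can be upgraded to a \emph{uniform} density statement by a compactness argument on $\TT^d\times\TT^d$.

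First, I would fix $\delta>0$ and choose $\eps>0$ small enough that $\kappa\eps\leq\min(\delta,6\eta)$, so that Lemma~\ref{lem:lps-mane} (together with Lemma~\ref{compare:dist}) guarantees: whenever $d(z,y)<\eps$, the intersection $W^u_{\kappa\eps}(z)\cap W^{cs}_\delta(y)$ is a single point. The task is therefore reduced to finding a uniform $R_0$ so that for all $x,y\in\TT^d$, the arc $W^u_{R_0}(x)$ meets the open ball $B(y,\eps)$; then $R:=R_0+\kappa\eps$ will work, since the intersection point produced by local product structure lies in $W^u(x)$ at leaf-distance at most $R_0+\kappa\eps$ from $x$.

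To produce such an $R_0$, I would consider, for each $R>0$, the set
\[
V_R := \bigl\{(x,y)\in\TT^d\times\TT^d : W^u_R(x)\cap B(y,\eps)\neq\emptyset\bigr\}.
\]
Continuity of the (1-dimensional) unstable foliation $W^u$ on $\TT^d$ implies that the arc $W^u_R(x)$ varies continuously in $x$ in the Hausdorff metric, and consequently $V_R$ is open in $\TT^d\times\TT^d$. The collection $\{V_R\}_{R>0}$ is monotonically increasing in $R$, and by the hypothesis that $W^u(x)$ is dense in $\TT^d$ for every $x$, we have $\bigcup_{R>0} V_R=\TT^d\times\TT^d$. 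Compactness of $\TT^d\times\TT^d$ then yields a single $R_0$ with $V_{R_0}=\TT^d\times\TT^d$, which is exactly the uniform ``$\eps$-density of unstable arcs'' needed above.

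The only point requiring genuine care is the openness of $V_R$, i.e.\ the continuity of the unstable foliation as a map from $\TT^d$ into the space of compact arcs (Hausdorff metric); this is a standard consequence of the fact that $W^u_g$ is the integral foliation of the continuous $Dg$-invariant line field $E^u_g$, so I would simply cite the continuity of $W^u$ without reproving it. Everything else is routine: local product structure packages the small-scale behaviour, and compactness does the globalization. No obstacle of substance is expected.
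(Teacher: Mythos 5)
Your argument is correct and takes essentially the same route as the paper: reduce the statement to a uniform radius $R_0$ at which every unstable arc comes within $\eps$ (resp.\ $\alpha=\delta/\kappa$ in the paper) of every point, then apply the local product structure of Lemma~\ref{lem:lps-mane} and enlarge $R$ by the product-structure scale. The only difference is that the paper simply asserts the existence of such a uniform $R_0$ from density of the leaves, whereas you supply the openness-plus-compactness argument on $\TT^d\times\TT^d$ that justifies the uniformity — a harmless (and slightly more complete) elaboration of the same proof.
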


\begin{proof}
 Fix $\delta>0$ and let $\alpha=\delta/\kappa$, where $\kappa$ is the constant from the local product structure. Fix $R_0>0$ such that each unstable leaf is $\alpha$-dense in the manifold. Thus for every $x\in \TT^d$ there is $z\in W_{R_0}^u(x)$ such that $d(y,z)<\alpha$, so by local product structure, $W^u_\delta(z)\cap W_\delta^{cs}(y)\neq \emptyset.$ Thus,  $W_{R_0+\delta}^u(x) \supset W_\delta^u(z)$ and so writing $R=R_0 +\delta$, we have $W^u_R(x)\cap W^{cs}_\delta(y)\neq\emptyset$.
\end{proof}

Because $g$ is uniformly expanding along $W^u$, we see that for every $\delta>0$ there is $N\in \NN$ such that for every $x\in \TT^d$ and $n\geq N$, we have $g^n(W^u_\delta(x)) \supset W^u_R(g^nx)$.  Thus by Lemma \ref{lem:intersection} we have
\begin{equation}\label{eqn:iterated-intersection}
g^n(W^u_\delta(x))\cap W^{cs}_\delta(y)\neq \emptyset \text{ for every } x,y\in \TT^d.
\end{equation}

Let $\chi$  be the indicator function of $\TT^d \setminus B(q, \rho)$, so that
$\frac{1}{i}S_i \chi(x)$
is the proportion of time that an orbit segment $(x, i)$ spends outside $B(q, \rho)$.

\begin{lem} \label{centrestable} 
Suppose $(x,n)\in \TT^d\times \NN$ is such that $\frac{1}{i}S_i\chi(x)\geq r$ for all $0\leq i\leq n$, and $\theta_r\in (0,1)$ is the constant defined at \eqref{eqn:gamma-r}. Then
\begin{enumerate}[label=(\alph{*})]
\item For any $y\in B_n(x,\rho/2)$, we have $\|Dg^i|_{E^{cs}(y)}\| < (\theta_r)^i$ for all  $0\leq i\leq n$.
\item For any $y,z\in W_{\rho/2}^{cs}(x)$, we have $d_W(f^iy,f^iz) \leq \theta_r^i d_W(y,z)$ for all $0\leq i\leq n$. 
\item For $0<\delta<\rho/2$, we have $W^{cs}_{\delta}(x)\subset B_n(x,2 \delta)$.
\end{enumerate}
\end{lem}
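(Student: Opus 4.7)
The three parts will be proved in order, extracting the multiplicative factor $\theta_r^i$ by counting the favorable iterates — those where $g^k x$ lies outside $B(q,\rho)$, so that $\|Dg|_{E^{cs}(w)}\|\leq \lambda_s(g)$ for suitable nearby points $w$ rather than only the crude bound $\lambda_c(g)$. The standing hypothesis $S_i\chi(x)\geq ri$ ensures at least an $r$-fraction of favorable iterates at every time $i\leq n$.

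For (a), if $y\in B_n(x,\rho/2)$ then $d(g^k y, g^k x)<\rho/2$ for $0\leq k<n$, so $g^k x\notin B(q,\rho)$ forces $g^k y\notin B(q,\rho/2)$ by the triangle inequality. Multiplying the pointwise derivative bounds from \S\ref{s.constructions} across $0\leq k<i$ and using $S_i\chi(x)\geq ri$ to count favorable steps yields $\|Dg^i|_{E^{cs}(y)}\|\leq \lambda_c(g)^{(1-r)i}\lambda_s(g)^{ri}=\theta_r^i$.

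For (b), my plan is to parameterize the leaf geodesic $c$ from $y$ to $z$ inside $W^{cs}(x)$ and show by induction on $j$ that $g^j c(t)$ remains in a $\rho/2$-neighborhood of $g^j x$ for all $j\leq i$ and all $t$. Granted this, Lemma \ref{compare:dist} places each $g^j c(t)$ outside $B(q,\rho/2)$ whenever $g^j x\notin B(q,\rho)$, so a mean-value estimate along the leaf yields the single-step inequality $d_W(g^{j+1}y,g^{j+1}z)\leq \lambda(\chi(g^j x))\,d_W(g^j y,g^j z)$ with $\lambda(1)=\lambda_s(g)$ and $\lambda(0)=\lambda_c(g)$; iterating and invoking the hypothesis on $S_j\chi(x)$ collapses the product to $\theta_r^i d_W(y,z)$. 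The main obstacle is precisely closing this bootstrap: to apply the favorable derivative bound we must already know the geodesic stays near the reference orbit, but that closeness is what the induction is meant to establish. The resolution is the monotonicity $\prod_{k<j}\lambda(\chi(g^k x))\leq \theta_r^j\leq 1$ at every $j\leq n$, which follows because $\lambda_s(g)<1$ and the hypothesis forces enough favorable factors at every scale; this uniformly prevents transient expansions from ever ejecting the geodesic from the neighborhood in which the derivative bound applies.

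Finally, (c) is immediate from (b) specialized to $z=x$: for $y\in W^{cs}_\delta(x)$ and $0\leq j\leq n$, we get $d_W(g^j y, g^j x)\leq \theta_r^j\delta\leq \delta$, and then Lemma \ref{compare:dist} gives $d(g^j y, g^j x)\leq(1+\beta)^2\delta\leq 2\delta$, so $y\in B_n(x,2\delta)$.
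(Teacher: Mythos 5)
Your proposal is correct and follows essentially the same route as the paper: part (a) is proved exactly as in the text (the hypothesis on $x$ plus the triangle inequality forces at least $ri$ iterates of $y$ outside $B(q,\rho/2)$, and the chain rule with $\lambda_c(g),\lambda_s(g)$ gives $\theta_r^i$), while (b) and (c) are left in the paper as an ``easy exercise'' from (a) and Lemma \ref{compare:dist}. Your bootstrap for (b), closed by noting that the hypothesis at \emph{every} intermediate time $i\leq n$ keeps the accumulated factor $\leq\theta_r^j\leq 1$, is precisely the intended argument, and you correctly identify this as the reason the hypothesis is stated for all $i\leq n$.
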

\begin{proof}
Given $0\leq i\leq n$, the inequality $\frac{1}{i}S_i\chi(x)>r$ implies that the orbit segment $(x,i)$ spends at least $ir$ iterates outside of $B(q,\rho)$.  It follows that $(y,i)$ spends at least $ir$ iterates outside of $B(q,\rho/2)$.  By the definition of $\lambda_c(g)$ and $\lambda_s(g)$, it follows that
\[
\|Dg^i|_{E^{cs}(y)}\| \leq \lambda_c^{i - ir} \lambda_s^{ir} = (\theta_r)^i,
\]
proving the first claim. It is an easy exercise to prove (b) using the uniform contraction estimate provided by (a), and (c) follows immediately from (b) and Lemma \ref{compare:dist} (using that $\beta$ is small so $(1+\beta)^2<2$).
\end{proof}

Now we define the decomposition.  We consider the following collections of orbit segments:
\begin{equation}\label{eqn:mane-decomp}
\begin{aligned}
\mathcal{G}&=\{(x,n)\in \mathbb{T}^d\times \mathbb{N}\, :\, S_i\chi(x)\geq ir\, \, \forall\,\, 0\leq i\leq n\},\\
\PPP &= \{(x,n)\in \mathbb{T}^d\times \mathbb{N}\, :\, S_n\chi(x)< nr \}.
\end{aligned}
\end{equation}

The collection $\GGG$ is chosen so that the centerstable manifolds are uniformly contracted along orbit segments from $\GGG$. These collections, together with the trivial collection $\{(x,0) : x \in X\}$ for $\SSS$, define a decomposition of any point $(x,n)\in X\times \mathbb{N}$ as follows:  let $p$ be the largest integer in $\{0,..., n\}$ such that $\frac{1}{p}S_p\chi(x)<r$, and thus $(x, p) \in \PPP$. We must have $(g^p(x), n-p)\in\mathcal{G}$ since if $\frac{1}{k}S_k\chi(g^px)<r$ for some $0\leq k\leq n-p$, then 
$\frac{1}{p+k}S_{p+k}\chi(x) = \frac 1{p+k} \left(S_p\chi(x) + S_k\chi(g^p(x))\right) < r,$ contradicting the maximality of $p$.

\begin{lem} \label{spec}
The collection $\GGG$ has specification at any scale $\delta>0$.
\end{lem}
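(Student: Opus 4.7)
The plan is to produce the shadowing point by a backward inductive construction, using the uniform $W^{cs}$-contraction along $\GGG$-orbits from Lemma \ref{centrestable}, the local product structure of Lemma \ref{lem:lps-mane}, and the iterated intersection property \eqref{eqn:iterated-intersection} arising from uniform $W^u$-expansion and density of unstable leaves.

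Fix $\delta > 0$. First choose $\delta' \in (0, \min(\delta/2, \rho/2))$ small enough that Lemma \ref{centrestable}(c) yields $W^{cs}_{\delta'}(x) \subset B_n(x, \delta)$ for every $(x,n) \in \GGG$; thus any point lying on $W^{cs}_{\delta'}(x_j)$ automatically shadows $(x_j, n_j)$ at scale $\delta$ once $(x_j,n_j) \in \GGG$. Using \eqref{eqn:iterated-intersection}, pick a transition time $\tau \in \NN$ (depending only on $\delta'$ and $\kappa$) such that
\[
g^\tau(W^u_{\delta'/\kappa}(p)) \cap W^{cs}_{\delta'/\kappa}(q) \neq \emptyset \qquad \text{for every } p, q \in \TT^d.
\]

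Given $\{(x_j, n_j)\}_{j=1}^k \subset \GGG$, construct the shadowing point $y$ by reverse recursion. Set $p_k := x_k$. For $j = k-1, k-2, \ldots, 1$, assuming $p_{j+1} \in W^{cs}_{\delta'}(x_{j+1})$ has been chosen, select $w_j \in W^u_{\delta'/\kappa}(g^{n_j}(x_j))$ with $g^\tau(w_j) \in W^{cs}_{\delta'/\kappa}(p_{j+1})$. Uniform $W^u$-expansion of $g$ implies that $g^{-n_j}(w_j)$ lies on $W^u(x_j)$ at distance at most $\delta' \lambda_u^{-n_j}/\kappa$ from $x_j$, which is well below the local product structure scale. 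Lemma \ref{lem:lps-mane} then provides a unique $p_j \in W^{cs}_{\delta'}(x_j) \cap W^u_{\delta'}(g^{-n_j}(w_j))$. Finally set $y := p_1$.

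For the verification, $y \in W^{cs}_{\delta'}(x_1)$ directly yields $y \in B_{n_1}(x_1, \delta)$. For $j \geq 2$ one tracks the orbit of $y$ across successive transitions: by construction $g^{n_j}(p_j) \in W^u(w_j)$ at scale inherited from the exponentially small preimage $g^{-n_j}(w_j)$, so after the transition $g^{n_j+\tau}(p_j)$ sits close to $p_{j+1}$ along both $W^{cs}$ (via $g^\tau(w_j) \in W^{cs}_{\delta'/\kappa}(p_{j+1})$) and $W^u$ (from the preimage contraction), from which Lemma \ref{centrestable} and Lemma \ref{compare:dist} yield $g^{m_{j-1}+\tau}(y) \in B_{n_j}(x_j, \delta)$. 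The main obstacle is the scale bookkeeping: at each transition a small $W^u$-offset is introduced, and under subsequent forward iteration this grows by at most $\lambda_u^{n_{j+1}+\tau}$ per segment, threatening to exceed the Bowen radius. The fix is that the offset at $p_j$ was originally scaled down by $\lambda_u^{-n_j}$ through the backward preimage, so by choosing $\delta'$ sufficiently small relative to $\delta$ and $\lambda_u^\tau$, the telescoping of these exponentially damped offsets along the chain remains absorbed in $\delta$.
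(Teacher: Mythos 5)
Your backward recursion breaks down at the very first step of the construction. Since $w_j\in W^u_{\delta'/\kappa}(g^{n_j}(x_j))$, the point $g^{-n_j}(w_j)$ lies on the \emph{same} unstable leaf as $x_j$, exponentially close to it; hence $W^u(g^{-n_j}(w_j))=W^u(x_j)$, and the set $W^{cs}_{\delta'}(x_j)\cap W^u_{\delta'}(g^{-n_j}(w_j))$ contains $x_j$. By the uniqueness of the intersection point in Lemma \ref{lem:Wlps} (via Lemma \ref{lem:lps-mane}), this intersection is exactly $\{x_j\}$, so your recursion yields $p_j=x_j$ for every $j$ and $y=x_1$: all of the connecting data carried by $w_j$ is discarded, and the orbit of $y$ has no relation to $(x_2,n_2),\dots,(x_k,n_k)$. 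If you want $p_j$ to remember $w_j$, the natural repair is to take $p_j:=g^{-n_j}(w_j)$ itself (pull the connecting point back along $W^u$), not a product-structure intersection with $W^{cs}(x_j)$.

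Even with that repair the verification does not close. One then has $g^{n_j+\tau}(p_j)=g^\tau(w_j)\in W^{cs}_{\delta'/\kappa}(p_{j+1})$, and you must show that the forward orbit of this point stays near the forward orbit of $p_{j+1}$ for all remaining $m_k-m_j-\tau$ iterates. That is a forward $W^{cs}$-tracking problem across all later transitions: along each later $\GGG$-segment the center-stable offset contracts by $\theta_r^{n_i}$ (Lemma \ref{centrestable}), but during each transition window of length $\tau$ it can grow by a factor up to $\sup_x\|Dg|_{E^{cs}(x)}\|^{\tau}\geq\lambda_c(g)^\tau>1$; since the $n_i$ in the specification property are arbitrary (possibly very short), these losses compound and need not stay below any fixed scale. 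Your claimed damping rests on the factor $\lambda_u^{-n_j}$, but that controls only the unstable offset inside segment $j$ and says nothing about the center-stable offsets that must be propagated through segments $j+1,\dots,k$; there is also a circularity in ``choosing $\delta'$ small relative to $\lambda_u^\tau$,'' since $\tau$ depends on $\delta'$ through \eqref{eqn:iterated-intersection}. The paper's proof avoids both issues by running the recursion \emph{forward}: $y_{j+1}$ is obtained from $y_j$ by a correction along $W^u$ at time $m_j$, so the perturbation to all earlier segments propagates backward along unstable leaves, where contraction per transition is uniform regardless of the segment lengths, while the new segment is shadowed using Lemma \ref{centrestable}(c) within that single segment only; no center-stable distance is ever tracked across a transition.
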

\begin{proof}
For an arbitrary fixed $\delta>0$, we prove specification at scale $3\delta$. The key property that allows us to transition from one orbit to another is \eqref{eqn:iterated-intersection}.  This property, together with uniform expansion on $W^u$, allows us to choose $\tau = \tau(\delta)\in \NN$ such that 
\[
\begin{aligned}
g^\tau(W^u_\delta(x))&\cap W^{cs}_\delta(y)\neq \emptyset \text{ for all } x,y\in \TT^d, \\
d(g^{-\tau}y, g^{-\tau} z) &< \frac{1}{2}d(y, z) \text{ for all } x\in \TT^d \text{ and } y,z\in W^u_\delta(x).
\end{aligned}
\]
Given any $(x_1, n_1), \dots, (x_k, n_k)\in \mathcal{G}$, we construct $y_j$ such that $(y_j,m_j)$ shadows $(x_1,n_1),\dots,(x_j,n_j)$, where $m_1 =n_1$, $m_2 = n_1 + \tau + n_2$, $\dots$, $m_k = (\sum_{i=1}^{k} n_i) + k\tau$. We also set $m_{0} = - \tau$.
 
Let $y_1 = x_1$, and choose $y_2,\dots, y_k$ recursively so that
$$
\begin{matrix}
g^{m_1}y_1 \in W^{u}_\delta (g^{m_1}y_1) &  \mbox{and} & g^{m_1+\tau}y_2 \in W^{cs}_\delta (x_2)\\
g^{m_2}y_3 \in W^{u}_\delta (g^{m_1}y_2) &  \mbox{and}   & g^{m_1+\tau}y_3 \in W^{cs}_\delta (x_3)\\
\vdots & \vdots & \vdots \\
g^{m_{k-1}}y_k \in W^{u}_\delta (g^{m_{k-1}}y_{k-1}) &  \mbox{and}   & g^{m_{k-1}+\tau}y_{k} \in W^{cs}_\delta (x_k).\\
\end{matrix}
$$
Since $g^{m_j}y_{j+1}$ is in the unstable manifold of $g^{m_j}y_j$,  and distance is contracted by $\frac{1}{2}$ every time the orbit passes backwards through a `transition', we obtain that 
$$
\begin{matrix}
d_{n_j}(g^{m_{j-1}+\tau}y_j, g^{m_{j-1}+\tau}y_{j+1})& < &\delta \\
d_{n_{j-1}}(g^{m_{j-2}+\tau}y_j, g^{m_{j-2}+\tau}y_{j+1})& < &\delta/2 \\
\vdots &  &\vdots\\
d_{n_1}(y_j, y_{j+1}) & < & \delta/2^j.
\end{matrix}
$$
That is, $d_{n_{j-i}}(g^{m_{j-i-1}+\tau}y_j, g^{m_{j-i-1}+\tau}y_{j+1}) < \delta/2^i$ for each  $i \in \{1, \ldots, j\}$. Since $g^{m_j+\tau}(y_{j+1}) \in B_{n_{j+1}}(x_{j+1}, \delta)$ by Lemma \ref{centrestable}, it follows that $$d_{n_j}(g^{m_{j-1}+ \tau}y_k, x_j) < 2 \delta + \sum_{j=1}^\infty 2^{-j} \delta = 3\delta.$$
Thus, $y_k\in \bigcap_{j=1}^k g^{-(m_{j-1} + \tau)}B_{n_j}(x_j, 3 \delta)$, and so $\mathcal{G}$ has specification at scale $3 \delta$.
\end{proof}

\subsection{The Bowen property}

Let $\theta_u \in (0,1)$ be such that $\|Dg|_{E^u(x)}^{-1}\| \leq \theta_u$ for all $x\in \TT^d$. 
Let $\kappa$ be the constant associated with the local product structure of $E^{cs}_g \oplus E^u_g$. Let $\eps = \rho/(2\kappa )$.
 
\begin{lem}\label{bowen-balls}
Given $(x,n)\in \GGG$ and $y\in B_n(x,\eps)$, we have
\begin{equation}\label{eqn:hyp-hyp}
d(g^kx,g^ky) \leq \kappa \eps(\theta_r^k + \theta_u^{n-k})
\end{equation}
for every $0\leq k\leq n$.
\end{lem}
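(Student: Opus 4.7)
The plan is to use the local product structure (Lemma \ref{lem:lps-mane}) to split $y$ into a centerstable component and an unstable component relative to $x$, and then track each component separately using the forward contraction along $W^{cs}$ provided by Lemma \ref{centrestable} and the uniform backward contraction along $W^u$ encoded by $\theta_u$.

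First I will apply LPS at time $0$: since $d(x,y) < \eps = \rho/(2\kappa) < 6\eta$, there is a unique intersection point $z \in W^{cs}_{\kappa\eps}(x) \cap W^u_{\kappa\eps}(y)$ with $d_{W^{cs}}(x,z) \leq \kappa\eps = \rho/2$ and $d_{W^u}(y,z) \leq \kappa\eps$. Since $z \in W^{cs}_{\rho/2}(x)$ and $(x,n) \in \GGG$, Lemma \ref{centrestable}(b) applies to the pair $(x,z)$ and gives
\[
d(g^k x, g^k z) \leq d_{W^{cs}}(g^k x, g^k z) \leq \theta_r^k \cdot \kappa\eps
\]
for every $0 \leq k \leq n$, which handles the centerstable component.

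For the unstable component $d(g^k z, g^k y)$, I will exploit the uniform expansion $\|Dg|_{E^u}^{-1}\| \leq \theta_u$, so that $W^u$ leaf distance contracts by a factor of $\theta_u$ under backward iteration. The strategy is to first establish an a priori upper bound on $d_{W^u}(g^{n-1} y, g^{n-1} z)$ and then back-iterate. The Bowen condition $d(g^{n-1}x, g^{n-1}y) < \eps$ combined with the bound $d(g^{n-1}x, g^{n-1}z) \leq \theta_r^{n-1}\kappa\eps \leq \kappa\eps$ yields $d(g^{n-1}y, g^{n-1}z) \leq (\kappa+1)\eps$ by the triangle inequality. Since $g^{n-1} z$ and $g^{n-1} y$ lie on a common local $W^u$-leaf (by $g$-invariance of $W^u$), Lemma \ref{compare:dist} converts this into a leaf-distance bound, and backward iteration along $W^u$ produces $d_{W^u}(g^k y, g^k z) \leq C\kappa\eps \cdot \theta_u^{n-k}$ for a suitable constant $C$.

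Combining via the triangle inequality $d(g^k x, g^k y) \leq d(g^k x, g^k z) + d(g^k z, g^k y)$ then yields the claimed bound. The main obstacle will be managing constants and scales: one must ensure that the intermediate points remain within the ranges where LPS, the leaf comparison of Lemma \ref{compare:dist}, and the contraction estimates are simultaneously valid, and that the constants arising from $\beta$, $\theta_u$, and the leaf-to-ambient conversions collapse so that the final coefficient is exactly $\kappa$. These are scale-choice issues rather than conceptual ones; the core of the argument is simply the LPS decomposition of $y$ followed by forward contraction on $W^{cs}$ and backward contraction on $W^u$.
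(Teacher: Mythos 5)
This is essentially the paper's proof: the paper likewise takes the local-product-structure point $z\in W^{cs}_{\kappa\eps}(x)\cap W^u_{\kappa\eps}(y)$, applies Lemma \ref{centrestable} to get $d(g^kx,g^kz)\le\theta_r^k\kappa\eps$, uses backward contraction along $W^u$ to get $d(g^kz,g^ky)\le\theta_u^{n-k}d(g^nz,g^ny)\le\theta_u^{n-k}\kappa\eps$, and finishes with the triangle inequality. The only divergence is your bookkeeping for the terminal unstable distance (anchoring at time $n-1$ via the Bowen ball and Lemma \ref{compare:dist}), which produces a coefficient slightly larger than $\kappa$ where the paper simply asserts $d(g^nz,g^ny)\le\kappa\eps$; this is harmless, since the only use of the lemma (Lemma \ref{bowenprop}) requires just a bound of the form $C(\theta_r^k+\theta_u^{n-k})$.
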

\begin{proof}
Using the local product structure,  there exists $z\in W^{cs}_{\kappa \eps}(x) \cap W^u_{\kappa \eps}(y)$.  Since $g^{-1}$ is uniformly contracting on $W^u$, we get 
\[
d(g^kz,g^ky) \leq \theta_u^{n-k} d(g^nz,g^ny) \leq \theta_u^{n-k} \kappa \eps,
\]
and Lemma \ref{centrestable} gives $d(g^kx,g^kz) \leq \theta_r^k d(x,z) \leq \theta_r^k \kappa \eps$. The triangle inequality gives \eqref{eqn:hyp-hyp}.
\end{proof}

\begin{lem} \label{bowenprop}
Any H\"older continuous $\varphi$ has the Bowen property on $\GGG$ at scale $\epsilon$.
\end{lem}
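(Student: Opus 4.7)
The plan is to combine the telescoping shadowing estimate of Lemma~\ref{bowen-balls} with the Hölder bound on $\varphi$, so that the Birkhoff differences $|S_n\varphi(x) - S_n\varphi(y)|$ are controlled by a pair of convergent geometric series whose sum is independent of $n$.

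More precisely, suppose $\varphi$ is $\alpha$-Hölder with semi-norm $|\varphi|_\alpha$. Fix $(x,n) \in \GGG$ and $y \in B_n(x,\epsilon)$. For each $0 \leq k < n$, I will apply Lemma~\ref{bowen-balls} to bound $d(g^k x, g^k y) \leq \kappa\epsilon(\theta_r^k + \theta_u^{n-k})$, and then use $(a+b)^\alpha \leq a^\alpha + b^\alpha$ (valid for $\alpha \in (0,1]$ and non-negative $a,b$) to deduce
\[
|\varphi(g^k x) - \varphi(g^k y)| \leq |\varphi|_\alpha (\kappa\epsilon)^\alpha \bigl(\theta_r^{k\alpha} + \theta_u^{(n-k)\alpha}\bigr).
\]
Summing the telescope over $k = 0, \ldots, n-1$ and comparing with the geometric series gives
\[
|S_n\varphi(x) - S_n\varphi(y)| \leq |\varphi|_\alpha (\kappa\epsilon)^\alpha \left(\frac{1}{1-\theta_r^\alpha} + \frac{\theta_u^\alpha}{1-\theta_u^\alpha}\right),
\]
which is a finite constant independent of $(x,n)$ and $y$. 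This establishes the Bowen property with a distortion constant depending only on $|\varphi|_\alpha, \alpha, \kappa, \epsilon, \theta_r, \theta_u$.

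There is no serious obstacle: all the work has already been absorbed into Lemma~\ref{bowen-balls}, which produces the exponential-in-both-directions shadowing estimate characteristic of (uniformly) hyperbolic dynamics on $\GGG$. The only minor point to keep in mind is that the $E^{cs}$ contraction rate $\theta_r < 1$ is only available because $(x,n) \in \GGG$, i.e.\ because the orbit spends at least proportion $r$ of its time outside $B(q,\rho)$ up to every intermediate time; this is exactly what was used in Lemma~\ref{centrestable} and therefore in Lemma~\ref{bowen-balls}. For a generic orbit segment in $X \times \NN$ such uniform contraction fails, which is why one needs to restrict the Bowen property to $\GGG$ rather than asking for it globally.
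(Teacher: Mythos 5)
Your proof is correct and follows essentially the same route as the paper: apply the two-sided shadowing estimate of Lemma~\ref{bowen-balls}, raise to the power $\alpha$, and sum the resulting geometric series to obtain a distortion constant independent of $(x,n)$. The only cosmetic difference is that you use the subadditivity $(a+b)^\alpha \leq a^\alpha + b^\alpha$ directly, whereas the paper uses the slightly cruder bound $(\theta_u^{n-k}+\theta_r^k)^\alpha \leq (2\theta_u^{n-k})^\alpha + (2\theta_r^k)^\alpha$; both are valid and yield the same conclusion.
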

\begin{proof}
Since $\varphi$ is H\"older,  there exists $K>0$ and $\alpha\in (0,1)$ such that $|\ph(x) - \ph(y)| \leq K d(x,y)^\alpha$ for all $x,y\in \TT^d$.  For $(x, n) \in \GGG$ and $y\in B_n(x,\eps)$, Lemma \ref{bowen-balls} gives
\[
|S_n\ph(x)-S_n \ph(y)| \leq K\sum_{k=0}^{n-1}d(g^kx, g^ky)^\alpha \leq K(\kappa \eps)^\alpha \sum_{k=0}^{n-1} (\theta_u^{n-k} + \theta_r^{k})^\alpha.
\]
The summand admits the upper bound
\[
(\theta_u^{n-k} + \theta_r^k)^\alpha \leq (2\theta_u^{n-k})^{\alpha} + (2\theta_r^k)^{\alpha},
\]
and we conclude that
\[
|S_n\ph(x)-S_n \ph(y)| \leq  K(2\kappa\eps)^\alpha \sum_{j=0}^\infty (\theta_u^{j\alpha} + \theta_r^{j\alpha})  < \infty.
\]
\end{proof}

\subsection{Expansivity}

The diffeomorphism $g$ is partially hyperbolic with one-dimensional center bundle. Thus, it is well known that the non-expansive set for a point $x$ must be contained in a compact subset of a (one-dimensional) center leaf, and so $g$ is entropy-expansive \cite[Proposition 6]{CY05}. We give a quick sketch proof for completeness. 

\begin{lem} \label{centerleaf}
For all $x \in \TT^d$, and $\eps \leq 6\eta$, $\Gamma_{\eps}(x)$ is contained in a compact subset of $W_g^c(x)$ with diameter a uniform multiple of $\eps$.
\end{lem}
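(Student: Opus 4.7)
The plan is to peel off the two hyperbolic directions using local product structure in two successive steps, so that the only remaining nonexpansive direction is the one-dimensional center.

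First, I would show $\Gamma_\eps(x) \subset W^{cs}_{\kappa\eps}(x)$. Given $y \in \Gamma_\eps(x)$ with $\eps \leq 6\eta$, Lemma~\ref{lem:lps-mane} produces a unique $z \in W^{cs}_{\kappa\eps}(x) \cap W^u_{\kappa\eps}(y)$, and by $g$-invariance of both foliations, $g^n z \in W^{cs}(g^n x) \cap W^u(g^n y)$ for every $n \in \ZZ$. Assuming $z \neq y$, the uniform expansion of $W^u$ gives $d_{W^u}(g^n z, g^n y) \geq \theta_u^{-n} d_{W^u}(z,y) \to \infty$. Let $n^*$ be the first iterate at which this leaf distance leaves the local-chart regime. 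At step $n^*-1$ the leaves are still local, so Lemma~\ref{compare:dist} converts the leaf-distance growth into an ambient bound $d(g^{n^*}z, g^{n^*}y) \gtrsim \eta$. Since $d(g^{n^*}x, g^{n^*}y) < \eps \leq 6\eta$, the triangle inequality then forces $d(g^{n^*}x, g^{n^*}z)$ to be comparable to $\eta$; but $z$ remains in a controlled $W^{cs}$-neighborhood of $x$ because the one-dimensional center expansion rate $\lambda_c(g)$ can be kept close to $1$ in the Mañ\'e construction, preventing orbits on $W^{cs}(x)$ from drifting that far within the relevant time scale. This contradiction forces $z = y$.

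Second, within the leaf $W^{cs}_{loc}(x)$, the transverse sub-foliations $W^c$ and $W^s$ have tangent distributions in the disjoint cones $C_\beta^c$ and $C_\beta^s$, so Lemma~\ref{lem:Wlps} supplies a local product structure inside $W^{cs}$. This writes $y$ as $W^s_{loc}(a) \cap W^c_{loc}(x)$ for a unique $a$. Running the argument from the first step in reverse time, using that $g^{-1}$ uniformly expands $W^s$, one concludes $a = y$, giving $y \in W^c_{loc}(x)$. Composing the two local-product constants yields $\Gamma_\eps(x) \subset W^c_{C\eps}(x)$ for a uniform constant $C$ depending only on $\kappa$, $\beta$, and the hyperbolicity rates, and Lemma~\ref{compare:dist} upgrades this to the required ambient-diameter bound $\diam \Gamma_\eps(x) \leq C'\eps$.

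The main obstacle will be rigorously controlling the intermediate points $g^n z$ and $g^{-n} a$ along the center-stable and center leaves, since the Mañ\'e perturbation permits the center bundle to mildly expand inside $B(q,\rho/2)$. Because $E^c$ is one-dimensional, this drift is governed by a single exponent bounded by $\lambda_c(g)$, which by construction can be made arbitrarily close to $1$; a direct estimate along $W^c$ leaves shows the center drift is dominated by the exponential growth of the unstable (or stable) leaf distance, so the contradiction in each step goes through for all $\eps \leq 6\eta$ once $\eta$ has been chosen small enough that the LPS chart estimates in Lemmas~\ref{lem:Wlps} and~\ref{compare:dist} apply at the relevant scale.
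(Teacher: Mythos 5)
Your two-step structure (peel off $W^u$ with the $W^{cs}$--$W^u$ product structure and forward expansion, then peel off $W^s$ inside the center-stable leaf with backward expansion) is exactly the strategy of the paper's sketch. The gap is in how you keep the auxiliary point $z$ near the orbit of $x$. Your contradiction is extracted at the first time $n^*$ when $d_{W^u}(g^{n^*}z,g^{n^*}y)$ reaches scale $\eta$, and you control the center-stable drift by $d_{W^{cs}}(g^{n^*}x,g^{n^*}z)\le \lambda_c(g)^{n^*}\kappa\eps$, arguing this is small because $\lambda_c(g)$ is close to $1$. But $n^*$ is not uniformly bounded: it is of order $\log\bigl(\eta/d_{W^u}(z,y)\bigr)$, and $d_{W^u}(z,y)$ can be arbitrarily small (e.g.\ when $y$ is very close to $W^{cs}(x)$), so $\lambda_c^{n^*}$ is unbounded no matter how close $\lambda_c>1$ is to $1$; choosing $\eta$ small does not help. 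Nor can you simply wait past $n^*$ for the exponential rates to sort themselves out, because once the unstable leaf distance exceeds the local scale it no longer controls the ambient distance on $\TT^d$ (the leaves are dense), so no contradiction with $d(g^nx,g^ny)<\eps$ can be drawn there. The same defect recurs in your backward step, where the stated justification is actually false: under $g^{-1}$ the center direction expands at rate up to $\lambda_s(g)^{-1}$, which is bounded away from $1$, so ``$\lambda_c$ close to $1$'' is not even the relevant exponent there.

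The missing idea is to make $g^nz$ stay within $\kappa\eps$ of $g^nx$ for \emph{all} $n\ge 0$, not just for a time window governed by a rate comparison. Two standard ways to do this: (i) re-apply the local product structure at every time $n$ --- since $d(g^nx,g^ny)<\eps\le 6\eta$ there is a unique point $w_n\in W^{cs}_{\kappa\eps}(g^nx)\cap W^u_{\kappa\eps}(g^ny)$, and uniqueness of the intersection (via the lift to $\RR^d$ as in Lemma \ref{lem:lps-mane}) identifies $w_{n+1}$ with $g(w_n)$, so $g^nz=w_n$ remains $\kappa\eps$-close to $g^nx$ and the uniform expansion $d_{W^u}(g^nz,g^ny)\ge\theta_u^{-n}d_{W^u}(z,y)$ forces $z=y$ --- this is what the paper's phrase ``under forward iterates, $z_1$ remains close to $x$'' is encoding; or (ii) lift the whole argument to $\RR^d$, where leaf distance and ambient distance are globally comparable for leaves trapped in the cones $C^u_\beta$, $C^{cs}_\beta$, and then the domination $\lambda_c(g)\theta_u<1$ (not closeness of $\lambda_c$ to $1$) does close your rate-comparison argument. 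Without one of these ingredients the proposed proof does not go through.
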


\begin{proof}[Sketch proof]
Recall that the foliations $W_g^{cs}$ and $W_g^u$ have a local product structure at scale $\eps$, and that there is also a local product structure within each leaf of $W_g^{cs}$ associated to the foliations $W_g^c$ and $W_g^s$.  In particular, for every $y\in \Gamma_{\eps}(x)$ there are $z_1 \in W_g^{cs}(x) \cap W_g^u(y)$ and $z_2\in W_g^c(x) \cap W_g^s(z_1)$, where all the leaf distances are controlled by a uniform multiple of $\eps$.  Under forward iterates, $z_1$ remains close to $x$, and so if $z_1\neq y$ then uniform forward expansion along leaves of $W_g^u$ implies that for some $n\geq 0$,  $d(g^n(z_1),g^n(y))$ is large enough that $d(g^n(x),g^n(y)) > \eps$.  Thus we must have $z_1 = y$.  A similar argument using backward iterates shows that $z_2 = z_1$.  Thus $y$ is in the local $W_g^c$ leaf of $x$. This shows that $\Gamma_{\eps}(x)$ is contained in a compact subset of $W_g^c(x)$, with diameter a uniform multiple of $\eps$.
\end{proof}

We use this to show there is no tail entropy at scale $6 \eta$, and that Condition \ref{E} from \S\ref{sec:obstr-exp} is satisfied.

\begin{lem} \label{hexp-mane}
The diffeomorphism $g$ satisfies $h_g^\ast(6\eta) =0$.
\end{lem}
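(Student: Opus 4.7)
The plan is to read Lemma \ref{centerleaf} as a one-dimensional statement: for every $x\in \TT^d$ the bi-infinite Bowen set $\Gamma_{6\eta}(x)$ is contained in a subarc of the one-dimensional center leaf $W_g^c(x)$ whose leaf-length is bounded by a uniform constant $M$ (a fixed multiple of $6\eta$), where I would apply Lemma \ref{compare:dist} to $E^c \subset C^c_\beta$ to turn the ambient diameter bound from Lemma \ref{centerleaf} into a leaf-length bound. The set $\Gamma_{6\eta}$ is forward-invariant, $g^k(\Gamma_{6\eta}(x))\subset \Gamma_{6\eta}(g^k x)$ for every $k$, so the same bound $M$ holds at every iterate. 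This puts the whole covering problem, at every time, onto a connected one-dimensional arc of uniformly bounded length.

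The next step is to show that $\Lambda^{\mathrm{sep}}_n(\Gamma_{6\eta}(x), \delta; g)$ grows at most linearly in $n$. Take an $(n,\delta)$-separated set $E\subset \Gamma_{6\eta}(x)$ and order its $N$ points $y_1,\dots,y_N$ along the center arc. Because $g$ restricts to a homeomorphism of the 1-dimensional leaf $W_g^c(x)$, for every $k\geq 0$ the images $g^ky_1,\dots,g^ky_N$ are in monotone order along $g^k(\Gamma_{6\eta}(x))$, which is itself an arc of leaf-length at most $M$. For each consecutive pair $i$, separation provides some $k_i\in \{0,\dots,n-1\}$ with $d(g^{k_i}y_i, g^{k_i}y_{i+1})\geq \delta$, and Lemma \ref{compare:dist} promotes this to $d_{W^c}(g^{k_i}y_i, g^{k_i}y_{i+1})\geq \delta$. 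Fixing $k$ and summing the leaf-gaps across those pairs whose separation is realised at time $k$ gives a total at most $M$, so at most $M/\delta$ pairs can be assigned to any one $k$. Pigeonholing over $k\in\{0,\dots,n-1\}$ forces $N-1 \leq nM/\delta$, i.e.\ $\Lambda^{\mathrm{sep}}_n(\Gamma_{6\eta}(x),\delta;g) \leq nM/\delta + 1$. Since $\Lambda^{\mathrm{span}}_n \leq \Lambda^{\mathrm{sep}}_n$, we get $\tfrac{1}{n}\log \Lambda^{\mathrm{span}}_n(\Gamma_{6\eta}(x),\delta;g) \to 0$ uniformly in $x$ and $\delta$, which by \eqref{eqn:Bowen-tail-entropy} gives $h_g^*(6\eta) = 0$.

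The main obstacle, such as it is, lies in the one-dimensional counting step: one has to be careful that iteration really does preserve the monotone ordering of points along the center arc (which is automatic because $g$ is a diffeomorphism of a one-dimensional leaf), and that the ambient-distance separation transfers to a leaf-distance separation with a controlled constant. Both are soft consequences of the cone-field regime with $\beta$ small (via Lemma \ref{compare:dist}), but they are the only nontrivial inputs needed beyond Lemma \ref{centerleaf} itself. Once they are in hand, the linear growth bound, and hence the vanishing of $h_g^*(6\eta)$, is essentially automatic.
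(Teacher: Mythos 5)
Your proof is correct and follows the paper's route: the paper likewise derives the vanishing of $h_g^*(6\eta)$ from Lemma \ref{centerleaf}, which confines each $\Gamma_{6\eta}(x)$ to a bounded-length interval of the one-dimensional center leaf, and then asserts $h(\Gamma_{6\eta}(x))=0$. The only difference is that you spell out the (standard, and correct) monotonicity/pigeonhole counting that the paper leaves implicit, using the invariance $g^k(\Gamma_{6\eta}(x))\subset\Gamma_{6\eta}(g^kx)$ to keep the arcs uniformly bounded at all times.
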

\begin{proof}
Given $x \in X$, Lemma \ref{centerleaf} shows that $\Gamma_{6\eta}(x)$ is contained in a compact interval in the center leaf that is bounded in length.  Therefore, $h(\Gamma_{6\eta}(x))=0$ for all $x\in\mathbb{T}^d$ and $h_g^\ast(6\eta)=0$.
\end{proof}

\begin{lem} \label{satisfieshyp}
The diffeomorphism $g$ satisfies Condition \ref{E} from \S\ref{sec:obstr-exp}.
\end{lem}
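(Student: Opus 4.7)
The plan is to verify condition \ref{E} with $\epsilon := \min\{6\eta, \rho/2\}$, the parameter $r$ from $\UUU_{\rho,r}$, and the fixed point $q$ (which remains fixed for $g$ after the construction). Lemma \ref{centerleaf} immediately gives $\Gamma_\epsilon(x) \subset W^c_g(x)$ for every $x$, with $W^c$-diameter at most $K\epsilon$ for some uniform constant $K$, so the task reduces to showing that under the hypothesis no $y \in W^c_g(x)$ with $y \neq x$ belongs to $\Gamma_\epsilon(x)$.

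Suppose such a $y$ exists, with $\ell_0 := d_{W^c}(x,y) > 0$. Since $\epsilon < \rho/2$, whenever $g^i x \notin B(q,\rho)$ we have $g^i z \notin B(q,\rho/2)$ for every $z$ on the center arc from $x$ to $y$ (provided $\ell_0$ is small, ensured by Lemma \ref{centerleaf} and the choice of $\epsilon$). As in Lemma \ref{centrestable}(a), the hypothesis $\frac{1}{n_k}S^g_{n_k}\chi_q(x) \geq r$ together with the definition of $\theta_r$ at \eqref{eqn:gamma-r} yields $\|Dg^{n_k}|_{E^c(z)}\| \leq \theta_r^{n_k}$ along this arc, so integrating along the arc gives
\[
d_{W^c}(g^{n_k}x,\, g^{n_k}y) \;\leq\; \theta_r^{n_k}\,\ell_0 \;\longrightarrow\; 0.
\]

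The main obstacle is the last step: deducing $\ell_0 = 0$ from this forward contraction. Forward contraction at times $n_k$ is by itself consistent with $\ell_0 > 0$, so one must also exploit the bi-infinite Bowen constraint $d_{W^c}(g^{n}x, g^{n}y) \leq K\epsilon$ for all $n \in \ZZ$. My plan is to transfer the proportion hypothesis to the backward direction by passing to a weak-$*$ subsequential limit $\mu$ of the empirical measures $\mu_k := \tfrac{1}{n_k}\sum_{j=0}^{n_k-1}\delta_{g^j x}$: since $\TT^d\setminus B(q,\rho)$ is closed and $\mu_k(\TT^d\setminus B(q,\rho))\geq r$, the portmanteau theorem gives $\mu(\TT^d\setminus B(q,\rho))\geq r$. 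Selecting an ergodic component of $\mu$ with proportion $\geq r$ and combining recurrence of $x$ to its support with Birkhoff's theorem would yield backward time-averages also $\geq r$ along some sequence, and hence backward expansion $\|Dg^{-m}|_{E^c}\|\geq \theta_r^{-m}$ of the arc. This forces $d_{W^c}(g^{-m}x, g^{-m}y) \to\infty$, contradicting the $K\epsilon$ bound and yielding $\ell_0 = 0$. Making this forward-to-backward transfer rigorous for the specific orbit of $x$ (rather than only for $\mu$-typical points) is the most delicate part of the argument.
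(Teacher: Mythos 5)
Your reduction to the center leaf and the forward contraction estimate are fine in outline (indeed you do not even need intermediate-time control there, since $y\in\Gamma_\eps(x)$ together with Lemma \ref{centerleaf} keeps the whole center arc uniformly small at \emph{every} time, so the total count of iterates outside $B(q,\rho)$ at time $n_k$ already bounds the derivative product along the arc). The genuine gap is exactly the step you flag: the forward-to-backward transfer for the specific orbit of $x$ cannot be made rigorous. Passing to a weak-$*$ limit $\mu$ of the empirical measures and invoking ergodic decomposition and Birkhoff only yields statements about $\mu$-almost every point; it says nothing about the backward averages of $x$ itself, and the desired pointwise implication is in fact false: a point on the unstable manifold of one of the two fixed points created inside $B(q,\rho)$ has backward Birkhoff averages of $\chi_q$ tending to $0$, while its forward orbit can spend frequency at least $r$ of its time outside $B(q,\rho)$ along a subsequence. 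So the forward hypothesis alone cannot force backward expansion of the arc, and your concluding contradiction never materializes.

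The paper's proof avoids backward expansion altogether. It works with good \emph{backward} Birkhoff sums: the proof applies Pliss's lemma to $S^{g^{-1}}_{m_k}\chi$ (this is the form of the hypothesis actually used, and it is what the application in Theorem \ref{expansivityestimate} supplies, since for an ergodic measure with $\mu(A^+)=0$, Birkhoff's theorem applied to $g^{-1}$ gives almost every point good backward averages). Pliss's lemma produces times $m_k'\to\infty$ such that the orbit segment $(g^{-m_k'}x,\,m_k')$ has all partial averages at least $r'\in(\gamma,r)$, i.e.\ lies in $\GGG$ at rate $r'$. Then, instead of pushing the arc at $x$ forward, one pushes the leaf ball at $g^{-m_k'}x$ forward: by Lemma \ref{centerleaf}, $\Gamma_\eps(g^{-m_k'}x)\subset W^{cs}_{\rho/2}(g^{-m_k'}x)$, and Lemma \ref{centrestable} gives $\Gamma_\eps(x)=g^{m_k'}\bigl(\Gamma_\eps(g^{-m_k'}x)\bigr)\subset g^{m_k'}\bigl(W^{cs}_{\rho/2}(g^{-m_k'}x)\bigr)\subset B\bigl(x,\theta_{r'}^{m_k'}\rho/2\bigr)$, and letting $k\to\infty$ yields $\Gamma_\eps(x)=\{x\}$ directly. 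In other words, uniform forward contraction along good segments \emph{ending} at $x$, rather than starting at $x$, is what kills the bi-infinite Bowen ball; with that observation no transfer between the forward and backward statistics of a single orbit is needed, which is precisely the idea missing from your attempt.
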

\begin{proof}
For sufficiently small $\eps>0$,  Lemma \ref{centerleaf} shows that every $x\in \TT^d$ has $\Gamma_\eps(x) \subset W_{\rho/2}^{cs}(x)$.   It follows from Pliss' Lemma~\cite{Pliss} that if $m_k\to \infty$ is such that $\frac 1{m_k} S_{m_k}^{g^{-1}}\chi(x) \geq r$ for every $k$, then for every $r' \in (\gamma, r)$ there exists $m_k'\to\infty$ such that for every $k$ and every $0\leq j\leq m_k'$, we have $\frac 1j S_j^{g^{-1}}\chi(g^{-m_k'+j}x) \geq r'$.  Thus $g^{-m_k'}x$ has the property that
\[
\tfrac 1mS_m^g\chi(g^{-m_k'}x) \geq r' \text{ for all } 0\leq m\leq m_k',
\]
so we can apply Lemma \ref{centrestable} and conclude that
\[
\Gamma_\eps(x) \subset g^{m_k'}(W_{\rho/2}^{cs}(g^{-m_k'}x)) \subset B(x,  \theta_{r'}^{m_k'}\rho/2)
\]
Since $m_k'\to\infty$ and $\theta_{r'}<1$, this implies that $\Gamma_\eps(x) = \{x\}$.
\end{proof}

\subsection{Proof of Theorem \ref{t.mane}} \label{maneprooffinal}

We now complete the proof that if $g\in \UUU_{\rho, \second}$ and $\ph\colon \TT^d\to \RR$ satisfy the hypotheses of Theorem \ref{t.mane}, then the conditions of Theorem \ref{t.generalM} are satisfied, and hence there is a unique equilibrium state for $(\TT^d,g,\ph)$.  We define the decomposition $(\PPP,\GGG, \SSS)$ as in \eqref{eqn:mane-decomp}.  In Lemma \ref{spec}, we showed $\GGG$ has specification at all scales. In Lemma \ref{bowenprop}, we showed $\ph$ has the Bowen property on $\GGG$ at scale $\eps = \frac{\rho}{2\kappa }$. In Theorem \ref{coreestimateallscales}, we showed $P(\PPP, \ph; g)$ admits the upper bound
\[
h_g^*(6\eta) + (1-r) \sup_{B(q, \rho)}\ph + r( \sup_{\mathbb{T}^d}\ph + h + \log L) + H(2r).
\]
By Lemma \ref{hexp-mane},  $h_g^*(6\eta)=0$, and so the hypothesis $\Psi(\rho, r, \ph)< P(\ph;g)$ gives $P(\PPP,\ph)<P(\ph;g)$.
By Theorem \ref{expansivityestimate} and Lemma \ref{satisfieshyp}, $\Pexp(\ph) \leq P(\PPP,\ph)$. Thus, we see that under the hypotheses of Theorem \ref{t.mane}, all the hypotheses of Theorem \ref{t.generalM} are satisfied for the decomposition $(\PPP, \GGG, \SSS)$.

\subsection{H\"older potentials with bounded range} \label{cor.br} We prove the following corollary of Theorem \ref{t.mane}.

\begin{cor}\label{t.manerange}  
Given $g\in \UUU_{\rho,\second}$, suppose that for $L=L(f_A, \eta)$ and $h=\htop(f_A)$, we have
\begin{equation} \label{mane.hestimate}
 r(\log L + h) + H(2r) < h.
\end{equation}
Let $\eta'=C(f_A)d_{C^0}(f_A, g)$ and $V(\varphi)= \Var(\varphi, \eta').$
Then writing $D(r) = h -r(\log L + h) - H(2r) > 0$,
every  H\"older continuous potential $\varphi$ with the bounded range hypothesis 
$\sup \varphi - \inf \varphi +V(\varphi) <D(r)$
has a unique equilibrium state. In particular, \eqref{mane.hestimate} is a criterion for $g$ to have a unique measure of maximal entropy.
\end{cor}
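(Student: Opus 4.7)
The plan is to apply Theorem~\ref{t.mane} by verifying the hypothesis $\Psi(\rho, r, \varphi) < P(\varphi; g)$ directly from the bounded range condition. Upper bounding $\Psi$ is immediate: replacing $\sup_{B(q,\rho)} \varphi$ by $\sup_{\TT^d} \varphi$ in the first term of the definition gives
\[
\Psi(\rho, r, \varphi) \leq \sup_{\TT^d}\varphi + r(h + \log L) + H(2r).
\]

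For the lower bound on $P(\varphi; g)$, I combine two ingredients. First, the elementary pressure inequality $P(\varphi; f_A) \geq P(0; f_A) + \inf_{\TT^d} \varphi = h + \inf_{\TT^d} \varphi$, which follows from the variational principle together with the fact that $\htop(f_A) = h$. Second, Lemma~\ref{pressuredrop}\ref{Pg-geq}, applied with $\eta$ chosen slightly larger than $\eta' = C(f_A) d_{C^0}(f_A, g)$ (and still small enough that $3\eta$ is an expansivity constant for $f_A$, which is available thanks to the scale choices in the Ma\~n\'e construction), yields
\[
P(\varphi; g) \geq P(\varphi; f_A) - \Var(\varphi, \eta) \geq h + \inf_{\TT^d} \varphi - \Var(\varphi, \eta).
\]
Since $\eta \mapsto \Var(\varphi, \eta)$ is continuous in $\eta$ for continuous $\varphi$ (a routine compactness argument using uniform continuity of $\varphi$ on $\TT^d$), letting $\eta \downarrow \eta'$ gives $P(\varphi; g) \geq h + \inf_{\TT^d} \varphi - V(\varphi)$.

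Combining the two bounds, the hypothesis of Theorem~\ref{t.mane} is implied by
\[
\sup_{\TT^d}\varphi + r(h + \log L) + H(2r) < h + \inf_{\TT^d}\varphi - V(\varphi),
\]
which rearranges precisely to the bounded range assumption $\sup \varphi - \inf \varphi + V(\varphi) < D(r)$. The measure-of-maximal-entropy statement then follows immediately by setting $\varphi \equiv 0$: the left side of the bounded range condition vanishes, and $D(r) > 0$ is exactly~\eqref{mane.hestimate}. The only technicality worth flagging is the passage $\eta \downarrow \eta'$ in the pressure-drop estimate, but this is standard.
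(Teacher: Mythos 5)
Your proof is correct and follows essentially the same route as the paper: bound $\Psi(\rho,r,\varphi)$ above by $\sup_{\TT^d}\varphi + h - D(r)$, bound $P(\varphi;g)$ below by $h + \inf_{\TT^d}\varphi - V(\varphi)$ using the variational principle for $f_A$ together with Lemma~\ref{pressuredrop}\ref{Pg-geq}, and compare via the bounded range hypothesis before invoking Theorem~\ref{t.mane}. Your extra care in applying the pressure-drop estimate at a scale $\eta$ slightly larger than $\eta'$ and letting $\eta \downarrow \eta'$ is a fine point the paper passes over silently, but it does not change the argument.
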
 
\begin{proof}
If $\sup \ph - \inf \ph  +V(\varphi)< D(r)$, then
\begin{align*}
 \Psi(\rho, r, \ph) &\leq (1-r) \sup_{B(q, \rho)}\ph + r( \sup_{\TT^d}\ph + h + \log L) +H(2r) +V(\ph) \\
&= (1-r) \sup_{B(q, \rho)}\ph + r(\sup_{\TT^d}\ph) + h +V(\ph) - D(r) \\
&\leq \sup_{\TT^d} \ph + h+V(\ph) - D(r) \\
&< \inf_{\TT^d}\ph + h - V(\ph) 
\leq P(\ph; f_A)- V(\ph)\leq P(\varphi; g).
\end{align*}
The last inequality follows from Lemma \ref{pressuredrop}\ref{Pg-geq}.
Thus Theorem \ref{t.mane} applies.
\end{proof}

\section{Lower bounds on entropy and proof of Theorem \ref{cor1.2}}\label{sec:corollaries}

It is well known  that the unique equilibrium state for a H\"older potential $\ph$ on a uniformly hyperbolic system has positive entropy.   We prove an explicit lower bound on the entropy in terms of $|\ph|_\alpha$, the H\"older semi-norm of $\ph$, for equilibrium states for maps with the specification property.

\begin{thm}\label{thm:pressure-gap}
Let $X$ be a compact metric space and $f\colon X\to X$ a homeomorphism.  Fix $\eps < \frac 16 \diam(X)$ and suppose that $f$ has specification at scale $\eps$ with transition time $\tau$.  Let $\ph\colon X\to\RR$ be a potential satisfying the Bowen property at scale $\eps$ with distortion constant $V$.  Let 
\[
\Delta = \frac{\log(1+e^{-(V+(2\tau+1)(\sup \ph - \inf\ph)})}{2(\tau+1)}.
\]
Then we have
\begin{equation}\label{eqn:gap}
P(\ph) \geq P(\ph,\eps) \geq \Big( \sup_\mu \int\ph\,d\mu\Big) + \Delta.
\end{equation}
In particular, every equilibrium state $\mu$ for $\ph$ has 
$h_\mu(f) \geq \Delta > 0$.
\end{thm}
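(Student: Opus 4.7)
The plan is to prove $P(\ph,\eps) \geq a^* + \Delta$, where $a^* := \sup_\mu \int \ph\, d\mu$, by exhibiting for each $m\in\NN$ a $(mN,\eps)$-separated set of $2^m$ orbits with partition sum at least $e^{mNa^* - V}(1+e^{-M})^m$, where $N := 2(\tau+1)$ and $M := V + (2\tau+1)(\sup \ph - \inf \ph)$. Taking logarithms, dividing by $mN$, and letting $m\to\infty$ will give the desired pressure bound; and for any equilibrium state $\mu$, the entropy estimate follows from $h_\mu(f) = P(\ph) - \int \ph\, d\mu \geq a^* + \Delta - \int \ph\, d\mu \geq \Delta > 0$.

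Two simple reductions feed the construction. First, for any invariant $\mu$ and any $n$, $\int \ph\, d\mu = \int n^{-1} S_n \ph\, d\mu \leq \sup_x n^{-1} S_n \ph(x)$, so there exists $x^*\in X$ with $S_{mN}\ph(x^*) \geq mNa^*$. Second, since $\diam(X) > 6\eps$, every $z\in X$ admits a point $w\in X$ with $d(w,z) > 3\eps$; this will let us plant separated ``diversions'' away from $x^*$'s orbit.

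Given $m$ and $\omega\in\{0,1\}^m$ with ones at indices $k_1 < \cdots < k_s$, I will apply specification at scale $\eps$ to produce $y_\omega$ shadowing an alternating sequence of $s+1$ base segments of $x^*$'s orbit separated by $s$ length-one diversion segments $(w_{k_i}, 1)$, where $w_{k_i}$ is chosen so that $d(w_{k_i}, f^{p_{k_i}} x^*) > 3\eps$ with $p_{k_i} := (k_i-1)N + \tau + 1$ the target shadowing time. A short arithmetic check with $N = 2(\tau+1)$ shows the base-segment lengths $b_i$ are positive integers and the total orbit length is exactly $mN$. Any two orbits $y_\omega, y_{\omega'}$ differ at the first block $k$ where $\omega_k \neq \omega'_k$: at time $p_k$, one of them is $\eps$-close to $w_k$ and the other is $\eps$-close to $f^{p_k} x^*$, so they are more than $\eps$ apart, giving $(mN,\eps)$-separation. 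For each $\omega$, the $s+1$ base segments contribute at most $(s+1)V$ in total Bowen loss (one $V$ per segment), the union of the $s$ diversion windows (each of length $2\tau+1$, comprising the diversion instant flanked by two length-$\tau$ transitions) is bounded termwise below by $\inf \ph$, and $S_{mN}\ph(x^*) \geq mNa^*$ controls the base contribution. Combining these:
\[
S_{mN}\ph(y_\omega) \geq mNa^* - (s+1)V - s(2\tau+1)(\sup \ph - \inf \ph) = mNa^* - V - sM,
\]
and summing $e^{S_{mN}\ph(y_\omega)}$ over $\omega\in\{0,1\}^m$ via the binomial identity produces the advertised factor $(1+e^{-M})^m$.

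The delicate point of the argument is the $V$-bookkeeping: the per-block cost $M$ must contain only one $V$ (not two), so one must \emph{not} apply the Bowen property on the length-one diversion segments (using $\ph \geq \inf \ph$ there instead). The extra $V$ per diversion arises purely because each diversion splits a base segment in two, raising the number of Bowen applications by one. Getting this accounting right, together with the choice $N = 2(\tau+1)$ (the smallest $N$ for which the above alternating specification has nonempty base pieces between consecutive diversions), is exactly what produces the precise form $M = V + (2\tau+1)(\sup \ph - \inf \ph)$ and $\Delta = \log(1+e^{-M})/(2(\tau+1))$ in the statement.
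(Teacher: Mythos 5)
Your argument is correct, and the underlying construction is the same as the paper's: use specification to insert, at times lying in $2(\tau+1)\NN$, short excursions to points at distance more than $3\eps$ from a reference orbit, producing an $(n,\eps)$-separated family whose Birkhoff sums lose at most one $V$ per base piece plus $(2\tau+1)(\sup\ph-\inf\ph)$ per excursion window; your accounting $(s+1)V + s(2\tau+1)(\sup\ph-\inf\ph)$ is exactly the paper's $(m_n+1)V+m_n(2\tau+1)(\sup\ph-\inf\ph)$. Where you genuinely diverge is in two steps, and both choices simplify the argument. For the reference orbit, the paper takes a generic point of an ergodic measure and invokes Birkhoff's theorem so that $\tfrac1n S_n\ph(x)\to\int\ph\,d\mu$, then takes a supremum over ergodic measures; you instead pick, for each length $mN$, a maximizer of $S_{mN}\ph$ and use the elementary inequality $\sup_x \tfrac1n S_n\ph(x)\geq\sup_\mu\int\ph\,d\mu$, which avoids ergodic theory and handles all invariant measures at once. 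For the counting, the paper fixes a density $\alpha$ of excursion times, bounds $\log\binom{n}{\ell}\geq H(\ell/n)n-o(n)$ by Stirling, and then optimizes $H(\alpha)-\alpha Q$ over $\alpha$ by calculus to obtain $\log(1+e^{-Q})$; you sum over all $2^m$ excursion patterns and get the factor $(1+e^{-M})^m$ exactly from the binomial theorem, with no asymptotic counting and no optimization step. Working along the subsequence $n=mN$ is harmless because the pressure at scale $\eps$ is a $\limsup$, and your deduction of $h_\mu(f)\geq\Delta$ for equilibrium states is the standard one. The only details left implicit — that the base pieces between consecutive diversion windows are nonempty (length $\geq 1$, with the possible exception of an empty final piece, which can simply be omitted), and that a block with $\omega_k=0$ is entirely covered by base segments so the separation argument at time $p_k$ applies — check out, so there is no gap.
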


\begin{proof}
Fix $x\in X$ and 
$n\in \NN$.  Fix $\alpha\in (0,\frac 12]$, let $m_n = \lceil \frac{\alpha n}{2(\tau+1)} \rceil$, and let
\[
\mathcal{I}_n = \{ (k_1, k_2, \ldots, k_{m_n}) \mid 0 < k_1 < k_2 < \cdots < k_{m_n} < n \text{ and } k_i\in 2(\tau+1)\NN\ \forall i\}.
\]
The idea is that for each $\vec k\in \mathcal{I}_n$, we will use the specification property to construct a point $\pi(\vec k) \in X$ whose orbit is away from the orbit of $x$ for a bounded  amount of time around each time $k_i$, and $\epsilon$-shadows the orbit of $x$ at all other times; thus the set of points $\{\pi(\vec k): \vec k \in \mathcal{I}_n\}$ will be $(n,\eps)$-separated on the one hand, and on the other hand each point $\pi(\vec k)$ will have its $n^{th}$ Birkhoff sum close to that of $x$. 

First note that standard estimates for factorials give $\log\binom{n}{\ell} \geq H(\frac\ell{n}) n + o(n)$, and that $m_n / \lfloor \frac n{2(\tau+1)} \rfloor \geq \alpha$, so
\begin{equation}\label{eqn:nmn}
\log\#\mathcal{I}_n
\geq \log{\lfloor \frac{n}{2(\tau+1)}\rfloor \choose m_n} 
\geq \frac{H(\alpha)}{2(\tau+1)} n - o(n).
\end{equation}
Given $k\in \{0, \dots, n-1\}$, let $y_k \in X$ be any point with $d(f^kx,y_k) > 3\eps$. 
Now for every $\vec{k}\in \mathcal{I}_n$, the specification property guarantees the existence of a point $\pi(\vec{k})\in X$ with the property that
\[
\begin{aligned}
\pi(\vec{k}) &\in B_{k_1-\tau}(x,\eps), \\
\qquad f^{k_1}(\pi(\vec{k})) &\in B(y_{k_1},\eps), \\
\qquad f^{k_1+\tau+1}(\pi(\vec{k})) &\in B_{k_2 - k_1 - 2\tau - 1}(f^{k_1 + \tau+1}x),
\end{aligned}
\]
and so on.  Writing $k_0=0$, we see that for any $0\leq i < m_n$ we have
\begin{equation}\label{eqn:pik}
\begin{aligned}
f^{k_i + \tau + 1}(\pi(\vec{k})) &\in B_{k_{i+1} - k_i - 2\tau - 1}(f^{k_i + \tau +1}x), \\
f^{k_{i+1}}(\pi(\vec{k})) &\in B(y_{k_{i+1}},\eps),
\end{aligned}
\end{equation}
and 
we ask that  $f^{k_{m_n}+ \tau+1}(\pi (\vec k)) \in B_{n-k_{m_n}}(f^{k_{m_n}+1+\tau}x)$.  

Write $j_i = k_{i+1} - k_i - 2\tau - 1$ for $i\in \{0, \ldots, m_n-1\}$ and $j_{m_n}=n-k_{m_n}$; then the Bowen property gives
\[
|S_{j_i} \ph(f^{k_i + \tau + 1}x) - S_{j_i} \ph(f^{k_i + \tau + 1} (\pi (\vec{k}))| \leq V
\]
for any $0\leq i \leq m_n$. We control the `excursions' away from $x$ by observing that for any $z,z' \in X$, $|S_{2\tau+1} \ph(z)-S_{2\tau+1} \ph(z')| \leq (2\tau+1)(\sup \ph-\inf \ph)$, and there are $m_n$ such excursions.
We conclude that 
\begin{equation}\label{eqn:Snphi}
|S_n\ph(\pi(\vec{k})) - S_n\ph(x)| \leq (m_n+1)V + m_n(2\tau+1)(\sup \ph- \inf \ph).
\end{equation}

Consider the set $\pi(\mathcal{I}_n) \subset X$.  Given any $\vec{k} \neq \vec{k}' \in \mathcal{I}_n$, let $i$ be minimal such that $k_i \neq k_i'$; then put $j=k_i'$ and observe that
$f^j(\pi(\vec{k})) \in B(y_j, \eps)$ and $f^j(\pi(\vec{k}')) \in B(f^j(x),\eps)$.  Since $d(y_j,f^jx) > 3\eps$ this guarantees that $\pi(\vec{k}') \notin B_n(\pi(\vec{k}),\eps)$, and so $\pi(\mathcal{I}_n)$ is $(n,\eps)$-separated.  Together with \eqref{eqn:Snphi}, this gives
\[
\begin{aligned}
\Lsep_n(\phi,\eps) &\geq \sum_{\vec{k} \in \pi(\mathcal{I}_n)} e^{S_n\ph(\pi(\vec{k}))} \\
&\geq (\#\mathcal{I}_n) \exp\big(S_n\ph(x) - 
(m_n+1)V - m_n(2\tau+1)(\sup\ph-\inf\ph)\big).
\end{aligned}
\]
Using \eqref{eqn:nmn} to bound $\#\mathcal{I}_n$ from below, we can take logs, divide by $n$, and send $n\to\infty$ to get
\[
P(\ph,\eps) \geq \Big(\limsup_{n\to\infty} \frac 1n S_n\ph(x) \Big)
+ \frac 1{2(\tau+1)} \Big(H(\alpha) - \alpha(V+(2\tau+1)(\sup \ph- \inf \ph)) \Big).
\]
Given any ergodic $\mu$, we can take a generic point $x$ for $\mu$ and conclude that the lim sup in the above expression is equal to $\int\ph\,d\mu$.  Thus to bound the difference $P(\ph,\eps) - \int\ph\,d\mu$, we want to choose the value of $\alpha$ that maximizes $H(\alpha) - \alpha Q$, where $Q=V+(2\tau+1)(\sup \ph- \inf \ph)$.

A straightforward differentiation and routine calculation shows that $\frac d{d\alpha} (H(\alpha) - \alpha Q) = 0$ occurs when $\alpha = (1+e^Q)^{-1}$, at which point we have $H(\alpha) - \alpha Q = \log(1+e^{-Q})$, proving Theorem \ref{thm:pressure-gap}.
\end{proof}

\begin{cor}\label{cor:pressure-gap}
Given a topologically mixing Anosov diffeomorphism $f$ on a compact manifold $M$, 
there are $Q,\delta>0$ such that for every H\"older potential $\ph$, we have
\[
P(\ph; f) \geq 
\delta \log(1+e^{-Q|\ph|_\alpha})
 + \sup_\mu \int\ph\,d\mu.
\]
\end{cor}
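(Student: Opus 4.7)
The plan is to apply Theorem \ref{thm:pressure-gap} at a fixed scale $\eps$ and then convert the resulting $\Delta$ into a lower bound phrased in terms of $|\ph|_\alpha$. First I would invoke Bowen's classical theorem to produce, for the topologically mixing Anosov map $f$, a transition time $\tau = \tau(\eps)$ witnessing specification at some chosen scale $\eps < \frac{1}{6}\diam(M)$. Uniform hyperbolicity gives exponential shadowing inside Bowen balls at scale $\eps$: there exist $\theta \in (0,1)$ and $C = C(\eps)$ such that for any orbit segment $(x,n)$ and any $y \in B_n(x,\eps)$, one has $d(f^k x, f^k y) \leq C\eps(\theta^k + \theta^{n-k})$ for $0 \leq k < n$ (the same estimate used in Lemma \ref{bowen-balls}, but now with hyperbolic rates in both directions). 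Summing the H\"older bound $|\ph(f^k x) - \ph(f^k y)| \leq |\ph|_\alpha d(f^kx, f^ky)^\alpha$ over $k$ and collapsing the resulting geometric series yields a Bowen distortion constant $V \leq C_1(\eps) |\ph|_\alpha$ on all of $M \times \NN$, with $C_1$ independent of $\ph$.

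The next step, which is the one subtle point in the argument, is a shift-invariance reduction. Both sides of the claimed inequality transform by the same additive constant when $\ph$ is replaced by $\ph + c$: the topological pressure, the supremum of integrals, and the H\"older semi-norm (which is unchanged) all cooperate. Without this observation the factor $(\sup \ph - \inf \ph)$ appearing in $\Delta$ could not possibly be controlled by $|\ph|_\alpha$ alone, since adding a large constant does not affect the semi-norm. Granting the reduction, I assume $\inf \ph = 0$, and then applying the H\"older bound at a point attaining the infimum gives $\sup \ph - \inf \ph = \sup \ph \leq |\ph|_\alpha \diam(M)^\alpha$.

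Combining the two estimates produces
\[
V + (2\tau + 1)(\sup \ph - \inf \ph) \leq Q |\ph|_\alpha
\]
for a constant $Q = C_1(\eps) + (2\tau + 1)\diam(M)^\alpha$ depending only on $f$ and $\eps$. Since $t \mapsto \log(1 + e^{-t})$ is decreasing, Theorem \ref{thm:pressure-gap} gives
\[
P(\ph; f) \geq \sup_\mu \int \ph \, d\mu + \frac{\log(1 + e^{-Q|\ph|_\alpha})}{2(\tau+1)},
\]
which is the desired bound with $\delta = 1/(2(\tau+1))$. Apart from the shift-invariance reduction, everything is a direct packaging of Theorem \ref{thm:pressure-gap} with standard hyperbolic estimates, so I expect no real obstacles beyond bookkeeping the constants.
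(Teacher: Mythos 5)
Your proposal is correct and takes essentially the same route as the paper: a Bowen distortion constant of the form $Q_1|\ph|_\alpha$ from uniform hyperbolicity, specification with transition time $\tau$ from topological mixing, and Theorem \ref{thm:pressure-gap} with $\delta = \frac{1}{2(\tau+1)}$ and monotonicity of $t \mapsto \log(1+e^{-t})$. The only difference is your ``shift-invariance reduction,'' which is superfluous (and your motivation for it is off): the quantity $\sup\ph - \inf\ph$ is itself unchanged by adding constants and is bounded directly by $|\ph|_\alpha(\diam M)^\alpha$, which is exactly the one-line estimate the paper uses.
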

\begin{proof} 
Every H\"older potential on an Anosov system has the Bowen property with distortion constant given by $Q_1|\ph|_\alpha$; moreover,  $\sup \ph- \inf \ph \leq |\ph|_\alpha (\diam M)^\alpha$. 
Mixing Anosov diffeomorphisms have the specification property;  let $\tau$ be the transition time for a scale at which $f$ has specification, and let $\delta= \frac 1{2(\tau+1)}$.  Then Theorem \ref{thm:pressure-gap} gives
\[
P(\ph; f) \geq \delta \log(1+e^{Q_1 |\ph|_\alpha + (2\tau+1)|\ph|_\alpha(\diam M)^\alpha}).
\]
Putting $Q = Q_1 + (2\tau+1)(\diam M)^\alpha$ gives the result.
\end{proof}

\subsection*{Proof of Theorem \ref{cor1.2}} 

We see from (i) of Lemma \ref{pressuredrop} that there is a constant $K$ (independent of $\rho, \second$) such that for every $g\in \UUU_{\rho,\second}$, 
\[
P(\ph; g) \geq P(\ph; f_A) - K^{\alpha}\rho^\alpha |\ph|_\alpha.
\]
Since $q$ is a fixed point of $f_A$, Corollary \ref{cor:pressure-gap} gives
\[
P(\ph; g) \geq \ph(q) + \delta \log(1+e^{-Q|\ph|_\alpha}) - K^{\alpha} \rho^\alpha |\ph|_\alpha 
\]
for every $g\in \UUU_{\rho,\second}$.  On the other hand, we have 
\begin{align*}
\Psi(\rho,\second,\ph)
&\leq \ph(q) + |\ph|_\alpha\rho^\alpha + r(\sup \ph - \ph(q) + h + \log L) + H(2r) \\
&\leq \ph(q) + |\ph|_\alpha (\rho^\alpha + r(\diam M)^\alpha) + r(h+\log L) + H(2r).
\end{align*}
Thus the following is a sufficient condition to give $\Psi(\rho,r,\ph) < P(\ph; g)$:
\[
(\rho^\alpha(1+K^{\alpha}) + r(\diam M)^\alpha)|\ph|_\alpha  + r(h + \log L) + H(2r)
< \delta \log(1+e^{-Q|\ph|_\alpha})
\]
Let $S_1(\rho,r) = \rho^\alpha(1+K^\alpha) + r(\diam M)^\alpha$ and $S_2(r) = r(h+\log L)+H(2r)$, so the above condition can be rewritten as
\begin{equation}\label{eqn:enough}
S_1(\rho,r)|\ph|_\alpha + S_2(r) < \delta \log(1+e^{-Q|\ph|_\alpha}).
\end{equation}
Given $\rho,r>0$, and $\alpha \in (0,1]$, define $T(\rho,r; \alpha)$ by
\begin{equation}\label{eqn:T}
T(\rho,r;\alpha) = \sup\big\{ T\in \RR : 
S_1(\rho,r) T + S_2(r) < \delta \log (1 + e^{-QT}) \big\}.
\end{equation}
Then for every $\ph$ with $|\ph|_\alpha < T(\rho,r; \alpha)$, condition \eqref{eqn:enough} holds, which gives $\Psi(\rho,r,\ph) < P(\ph; g)$.  This is enough to deduce the first part of Theorem \ref{cor1.2} from Theorem \ref{t.mane}.  For the second part of Theorem \ref{cor1.2}, observe that for every $t>0$, we can choose $\rho,r>0$ sufficiently small that $S_1(\rho,r)t + S_2(r) < \delta \log(1+e^{-Qt})$,
which means that $t < T(\rho,r; \alpha)$ for all sufficiently small $\rho,r$.  In other words, $T(\rho,r; \alpha)\to\infty$ as $\rho,r\to 0$, which completes the proof.

\section{Proof of Theorem \ref{main3}}\label{s.srb}

Given a $C^2$ diffeomorphism $g$ on a $d$-dimensional manifold and $\mu \in \mathcal{M}_e(g)$, let $\lambda_1 \leq \cdots \leq \lambda_d$ be the Lyapunov exponents of $\mu$, and let $\lambda^+(\mu)$ be the sum of the positive Lyapunov exponents.
Following the definition in \cite[Chapter 13]{BP07},  an \emph{SRB measure} for a $C^2$ diffeomorphism is an ergodic invariant measure $\mu$ that is hyperbolic (non-zero Lyapunov exponents) and has absolutely continuous conditional measures on unstable manifolds. The Margulis--Ruelle inequality  \cite[Theorem 10.2.1]{BP07} gives $h_\mu(g) \leq \lambda^+(\mu)$, and it was shown by Ledrappier and Young \cite{LY} that equality holds if and only if $\mu$ has absolutely continuous conditionals on unstable manifolds.  
In particular, for any ergodic invariant measure $\mu$, we have
\begin{equation}\label{eqn:nonpos}
h_\mu(g) - \lambda^+(\mu)\leq 0,
\end{equation}
with equality if and only if $\mu$ is absolutely continuous on unstable manifolds.  Thus an ergodic measure $\mu$ is an SRB measure if and only if it is hyperbolic and equality holds in \eqref{eqn:nonpos}.

Let  $g\in \UUU_{\rho,\second}$ be a $C^2$ diffeomorphism. Since there is a continuous splitting $T \TT^d = E^u \oplus E^{cs}$, the geometric potential 
$\phigeo(x) = -\log\|Dg|_{E^u(x)}\|$ is continuous. Furthermore, $\phigeo$ is H\"older continuous because the map $g$ is $C^2$ and the distribution $E^u$ is H\"older.
The H\"older continuity of $E^u$ follows from the standard argument for Anosov diffeomorphisms.  See for instance \cite[\S6.1]{BrSt}; the argument there extends unproblematically to the case of absolute partial hyperbolicity, which covers our setting.

We build up our proof of Theorem \ref{main3}. Since $\sup \phigeo< 0$, the function $t\mapsto P(t\phigeo)$ is a convex strictly decreasing function, 
so it has a unique root.  
We must show that this root occurs at $t=1$, that we have uniqueness of the equilibrium state for all $t$ in a neighborhood of $[0,1]$, and that the equilibrium state for $\phigeo$, which we denote $\mu_1$, is the unique SRB measure. We assume the hypothesis of Theorem \ref{main3} so that
\begin{equation}\label{eqn:mane-srb-condition2}
r(h+\log L) + H(2r)< \left(\frac{\sup_{x\in \TT^d} \phigeo_{g}(x)}{\inf_{x\in\TT^d} \phigeo_{g}(x)}\right) h, 
\end{equation}
and also that
\begin{equation}\label{eqn:mane-srb-condition3}
r(h+\log L) + H(2r)< - \sup \phigeo_{g}. 
\end{equation}

We recall the following result which was proved as  
Lemma 7.1 of \cite{CFT_BV}.

\begin{lem} \label{lem:Pgeq0}
Let $M$ be a compact Riemannian manifold and let $W$ be a $C^0$ foliation of $M$ with $C^1$ leaves. Suppose there exists $\delta>0$ such that $\sup_{x\in M} m_{W(x)}(W_\delta(x)) < \infty$, where $m_{W(x)}$ denotes volume on the leaf $W(x)$ with the induced metric.  Let $f\colon M\to M$ be a diffeomorphism and let $\psi(x) = -\log|\det Df(x)|_{T_x W(x)}|$.  Then $P(f,\psi)\geq 0$.
\end{lem}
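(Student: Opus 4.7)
The plan is to prove $P(\psi;f)\ge 0$ by a classical volume-comparison argument in the style of Bowen--Ruelle: cover a fixed leaf-ball by Bowen balls and turn a leaf-volume estimate into a lower bound on a partition sum via the cocycle identity $e^{S_n\psi(x)} = |\det Df^n(x)|_{T_xW(x)}|^{-1}$. Note that this cocycle identity implicitly requires that $f$ preserve the foliation (so that $f^n$ maps each leaf into a single leaf); the lemma will be applied to the unstable foliation of a $C^2$ diffeomorphism, where this invariance is automatic, and I would make the invariance hypothesis explicit at the start.

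Fix any $x_0\in M$ and observe that $m_{W(x_0)}(W_\delta(x_0))$ is strictly positive and finite. For each $n$ and each small $\eps>0$, let $E_n\subset W_\delta(x_0)$ be a maximal $(n,\eps)$-separated subset for the ambient Bowen metric $d_n$; by maximality the Bowen balls $B_n(x,\eps)$ cover $W_\delta(x_0)$, and so
\[
m_{W(x_0)}(W_\delta(x_0)) \;\le\; \sum_{x\in E_n} m_{W(x_0)}\bigl(B_n(x,\eps)\cap W(x_0)\bigr).
\]
The heart of the argument is to bound each summand by $M_0\,e^{n\omega(\eps)} e^{S_n\psi(x)}$, where $M_0:=\sup_z m_{W(z)}(W_\delta(z))<\infty$ is the hypothesis and $\omega$ is a modulus of continuity (described below). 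Applying the change-of-variables formula for $f^n$ restricted to the leaf $W(x_0)$ (using $f$-invariance of $W$) gives
\[
m_{W(x_0)}\bigl(B_n(x,\eps)\cap W(x_0)\bigr) \;\le\; \Bigl(\sup_{y} |\det Df^n(y)|_{T_yW(y)}|^{-1}\Bigr)\, m_{W(f^n x)}\bigl(f^n(B_n(x,\eps)\cap W(x_0))\bigr),
\]
with $y$ ranging over $B_n(x,\eps)\cap W(x_0)$.

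Two quantitative ingredients control the two factors. For the image factor, I would use that a $C^0$ foliation with $C^1$ leaves on a compact manifold admits, in foliation charts, a uniform constant $K$ comparing leaf and ambient metrics; then $f^n(B_n(x,\eps)\cap W(x_0))\subset W_{K\eps}(f^n x)$ for $\eps$ small, so the image factor is at most $M_0$ by hypothesis. For the Jacobian factor, continuity of $TW$ (which is part of the $C^0$-foliation hypothesis) and compactness of $M$ make $z\mapsto\log|\det Df(z)|_{T_zW(z)}|$ uniformly continuous with some modulus $\omega$; the Bowen-ball inequality $d(f^ky,f^kx)<\eps$ for $0\le k<n$ then yields $|S_n\psi(y)-S_n\psi(x)|\le n\omega(\eps)$ and hence the distortion bound $e^{n\omega(\eps)}$. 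Combining gives $\sum_{x\in E_n} e^{S_n\psi(x)} \ge c\,e^{-n\omega(\eps)}$ with $c=m_{W(x_0)}(W_\delta(x_0))/M_0>0$.

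Taking $\limsup$ in $n$ of $\tfrac{1}{n}\log$ yields $P(\psi,\eps;f)\ge -\omega(\eps)$; since $P(\psi;f)\ge P(\psi,\eps;f)$ for every $\eps>0$, letting $\eps\to 0$ delivers $P(\psi;f)\ge 0$. The main obstacle I anticipate is the geometric inclusion $f^n(B_n(x,\eps)\cap W(x_0))\subset W_{K\eps}(f^nx)$ with $K$ independent of $n$, $x_0$, and $x$; establishing this uniformly requires carefully invoking compactness together with the local foliation-box structure to ensure that ambient-metric Bowen radii translate into controlled leaf-metric radii inside each leaf.
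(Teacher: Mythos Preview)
The paper does not give its own proof of this lemma; it simply cites Lemma~7.1 of \cite{CFT_BV}, so there is no in-paper argument to compare against.  Your overall strategy --- cover a fixed leaf ball $W_\delta(x_0)$ by Bowen balls, use change of variables along leaves, and estimate distortion --- is the standard Bowen--Ruelle volume argument and is almost certainly the one used in \cite{CFT_BV}; you are also right that $f$-invariance of $W$ is needed for the cocycle identity and should be stated.

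There is, however, a genuine gap at exactly the point you flag.  The inclusion $f^n(B_n(x,\eps)\cap W(x_0))\subset W_{K\eps}(f^nx)$ cannot be obtained from ``compactness plus foliation boxes'': that only gives a metric comparison \emph{within a single plaque}.  When leaves are dense (as the unstable leaves are in the application), $B_n(x,\eps)\cap W(x_0)$ can meet many plaques, and its $f^n$-image need not lie in any leaf ball of radius comparable to $\eps$; the image volume bound $\le M_0$ then fails.  The clean fix is to run the covering argument with \emph{leaf} Bowen balls $\tilde B_n(x,\eps)=\{y\in W(x): d_W(f^ky,f^kx)<\eps,\ 0\le k<n\}$: then $f^n(\tilde B_n(x,\eps))\subset W_{L\eps}(f^nx)$ trivially (with $L$ the leaf-Lipschitz constant of $f$), so the image volume is $\le M_0$.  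The remaining issue --- that a maximal $(n,\eps)$-separated set for $d_n^W$ must be shown to be $(n,c\eps)$-separated for the ambient $d_n$ --- is handled by looking at the \emph{first} $k$ with $d_W(f^kx,f^ky)\ge\eps$: then $\eps\le d_W(f^kx,f^ky)<L\eps$, so the two points lie in a single small plaque where the leaf and ambient metrics are uniformly comparable, giving $d(f^kx,f^ky)\ge c\eps$.  With this modification your argument goes through.
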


The hypothesis of this lemma is met for a foliation which lies in a cone around a linear foliation, see \cite[\S 7.2]{CFT_BV} for details, so Lemma \ref{lem:Pgeq0} applies to the unstable foliation $W^u$ of the Ma\~n\'e family. We conclude that $P(\ph^u; g) \geq 0$.

To get a unique equilibrium state for $t\phigeo$, it suffices to show that
\[
\Pbad(t) := \Psi(\rho, r, t \phigeo) =  (1-r) \sup_{B(q, \rho)} t\phigeo + r (\sup_{\mathbb{T}^d} t\phigeo + h + \log L ) + H(2r)
\]
satisfies $\Pbad(t) < P(t\phigeo)$ for all $t\in [0,1]$ and then apply Theorem \ref{t.mane}.  Note that  since the equality is strict it will then continue to hold for all $t$ in a neighborhood of $[0,1]$. 


The Ma\~n\'e construction is carried out to leave $E^u$ as unaffected as possible, so we expect that $\sup \phigeo_{f_M}$ and $\inf \phigeo_{f_M}$ are close to $\phigeo_{f_A} \equiv - \log \lambda_u$. Thus, we expect that the $\sup \phigeo_g / \inf \phigeo_g$ term in \eqref{eqn:mane-srb-condition2} can be taken close to $1$. Since making this precise would require a lengthy analysis of the details of the construction with only a small benefit to our estimates, we choose to not pursue this argument. 
For $t\geq0$, we have
\begin{equation} \label{upperline}
\Pbad(t) \leq t(\sup \phigeo) +r (h + \log L ) + H(2r).
\end{equation}
At $t=1$, it is immediate from \eqref{eqn:mane-srb-condition3} that 
\begin{equation} \label{bad0geo}
\Pbad(1)<0 \leq P(\phigeo),
\end{equation}
so $\phigeo$ has a unique equilibrium state. The case $t\in[0,1)$ requires some more analysis. The straight line $l_1(t)$ described by \eqref{upperline}  which bounds $\Pbad(t)$ above has its root at \[
t^\ast = -\frac{r (h + \log L ) + H(2r)}{\sup \phigeo},
\]
and by \eqref{eqn:mane-srb-condition3}, $t^\ast <1$. Thus, for $t\in (t^\ast, 1]$, $\Psi(t)<0\leq P(\phigeo) \leq P(t \phigeo)$.

For $t \in [0, t^\ast]$, the variational principle shows that $P(t\phigeo) \geq h+ t (\inf \phigeo)$. 
Thus we have bounded $P(t\phigeo)$ from below by a straight line $l_2(t)$. In \eqref{upperline},  we bounded $\Pbad(t)$ above by a straight line $l_1(t)$. By \eqref{eqn:mane-srb-condition2}, $l_2(0) > l_1(0)$. The root of $l_2$ is $-h/(\inf \phigeo)$, 
 and the root of $l_1$ is $t^\ast$. Thus by \eqref{eqn:mane-srb-condition2},  $t^\ast<-h/(\inf \phigeo)$ and so $l_2(t^\ast)>  l_1(t^\ast)$. In particular, for $t \in [0, t^\ast]$, $P(t\phigeo) \geq l_2(t)>l_1(t) \geq \Pbad(t)$.
We conclude that $\Pbad(t) < P(t\phigeo)$ for all $t\in [0,1]$, and thus there is a unique equilibrium state by Theorem \ref{t.mane}. 

It remains to show that $P(\phigeo;g)= 0$ and that the unique equilibrium state is in fact the unique SRB measure. 
Let $\mu$ be ergodic, and let $\lambda_1 \leq \lambda_2 \leq \ldots \leq \lambda_d$ be the Lyapunov exponents of $\mu$. Recall that $E^{cs} \oplus E^{u}$ is 
$Dg$-invariant, so for every $\mu$-regular $x$ the Oseledets decomposition is a sub-splitting of $E^{cs} \oplus E^{u}$, and thus $\int \phigeo\,d\mu =- \lambda_d(\mu)$. Thus, 
\begin{equation}\label{eqn:phlambda} 
\int\phigeo\,d\mu \geq -\lambda^+(\mu)
\end{equation}
 and if $\lambda_{d-1}(\mu) <0$ it follows that $\int \phigeo\,d\mu =- \lambda^+(\mu)$. Let $\mathcal{M}_* \subset \mathcal{M}_e(g)$ be the set of ergodic $\mu$ such that $\mu$ is hyperbolic and $\lambda_{d-1}(\mu) <0$.
\begin{lem} \label{lem:keySRBestimate}
If $\mu \in \mathcal{M}_e(g) \setminus \mathcal{M}_*$, then 
\[
h_\mu(g) -\lambda^+(\mu) \leq h_\mu(g) + \int\phigeo\,d\mu \leq \Pbad(1).
\]
\end{lem}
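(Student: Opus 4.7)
The first inequality follows immediately from \eqref{eqn:phlambda}, which asserts $\int\phigeo\,d\mu \geq -\lambda^+(\mu)$. The plan for the second inequality is to reduce it to an application of Lemma \ref{keystepexpansivityestimate} by showing that every $\mu \in \mathcal{M}_e(g) \setminus \mathcal{M}_*$ satisfies $\mu(A^+) = 1$. Once this is in hand, Lemma \ref{keystepexpansivityestimate} gives $h_\mu(g) + \int\phigeo\,d\mu \leq P(\CCC,\phigeo;g)$, and Theorem \ref{coreestimateallscales} combined with $h_g^\ast(6\eta)=0$ (Lemma \ref{hexp-mane}) identifies the right-hand side with $\Pbad(1)$. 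So the heart of the argument is the claim $\mu(A^+) = 1$.

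First I would unpack what $\mu \notin \mathcal{M}_*$ means in terms of Lyapunov exponents. The bundle $E^{cs}\oplus E^u$ is $Dg$-invariant and the Oseledets splitting refines it, so since $E^u$ is one-dimensional and uniformly expanded, $\lambda_d(\mu)>0$ corresponds to $E^u$, while $\lambda_1 \leq \cdots \leq \lambda_{d-1}$ are the $E^{cs}$-exponents. If $\lambda_{d-1}(\mu) < 0$, then all $E^{cs}$-exponents are negative and $\mu$ is hyperbolic with $\lambda_{d-1}<0$, so $\mu\in \mathcal{M}_*$. Hence $\mu \notin \mathcal{M}_*$ forces $\lambda_{d-1}(\mu) \geq 0$, and by domination of $E^c$ over $E^s$ this top $E^{cs}$-exponent is attained along the one-dimensional sub-bundle $E^c$.

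Next I would bound this exponent using the pointwise control furnished by the construction of $g$: $\|Dg|_{E^c(y)}\| \leq \lambda_c(g)$ everywhere, and $\|Dg|_{E^c(y)}\| \leq \lambda_s(g)$ for $y \notin B(q,\rho/2)$, hence for $y \notin B(q,\rho)$. Writing $\chi$ for the indicator of $\TT^d \setminus B(q,\rho)$, multiplying the norms along a $\mu$-generic orbit and invoking Birkhoff gives
\[
\lambda_{d-1}(\mu) \leq (1-\nu)\log \lambda_c(g) + \nu \log \lambda_s(g), \qquad \nu := \mu(\TT^d \setminus B(q,\rho)).
\]
The right-hand side is decreasing in $\nu$ and equals $\log \theta_r(g) < 0$ at $\nu = r$ by \eqref{eqn:gamma-r}, so $\lambda_{d-1}(\mu)\geq 0$ forces $\nu < r$ strictly. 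By Birkhoff's theorem, for $\mu$-a.e.\ $x$ there is $K(x)$ with $\tfrac1n S_n^g\chi(x) < r$ for all $n > K(x)$, i.e., $\mu(A^+) = 1$.

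The main obstacle is the Lyapunov estimate in the preceding paragraph: one must carefully match the threshold $r$ that defines $A^+$ and $\theta_r$ (stated in terms of $B(q,\rho)$) with the pointwise norm bounds supplied by the construction (stated in terms of $B(q,\rho/2)$). This matches cleanly because $B(q,\rho)^c \subset B(q,\rho/2)^c$, so iterates counted by $\chi$ are automatically places where $\lambda_s(g)$ applies; beyond this, the argument is an assembly of previously established facts.
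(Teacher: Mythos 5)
Your proof is correct, and its skeleton is the same as the paper's: reduce everything to the claim $\mu(A^+)=1$, then combine \eqref{eqn:phlambda}, Lemma \ref{keystepexpansivityestimate}, Theorem \ref{coreestimateallscales}, and Lemma \ref{hexp-mane} to get $h_\mu(g)+\int\phigeo\,d\mu\leq P(\CCC,\phigeo)\leq \Pbad(1)$. Where you genuinely diverge is in the verification of $\mu(A^+)=1$. The paper argues pointwise and by contradiction: $\mu\notin\mathcal{M}_*$ gives, for $\mu$-a.e.\ $z$, a vector $v\in E^{cs}_z$ with nonnegative exponent, and if $z\notin A^+$ there is a sequence $n_k\to\infty$ with $\frac1{n_k}S_{n_k}\chi(z)\geq r$, so the estimate underlying Lemma \ref{centrestable}(a) yields $\|Dg^{n_k}|_{E^{cs}(z)}\|<\theta_r^{n_k}$, contradicting the nonnegative exponent. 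You instead integrate: since $E^c$ is one-dimensional and $Dg$-invariant, the $E^c$-exponent of $\mu$ equals $\int\log\|Dg|_{E^c}\|\,d\mu\leq(1-\nu)\log\lambda_c(g)+\nu\log\lambda_s(g)$ with $\nu=\mu(\TT^d\setminus B(q,\rho))$, and its nonnegativity (forced by $\mu\notin\mathcal{M}_*$, since the $E^s$-exponents are uniformly negative and the $E^u$-exponent is positive) gives $\nu<r$ via \eqref{eqn:gamma-r}, after which Birkhoff's theorem gives $\mu(A^+)=1$. Both routes rest on the same derivative bounds ($\lambda_c(g)$ inside $B(q,\rho/2)$, $\lambda_s(g)$ outside, with the $\rho$ versus $\rho/2$ bookkeeping handled exactly as in the proof of Lemma \ref{centrestable}), so neither is more general; yours gives the slightly stronger conclusion $\mu(\TT^d\setminus B(q,\rho))<r$ and avoids the subsequence argument, while the paper's reuses Lemma \ref{centrestable} directly and never needs to identify $\lambda_{d-1}(\mu)$ with the Birkhoff integral over $E^c$. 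Either argument is complete.
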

\begin{proof}
If $\mu \in \mathcal{M}_e(g) \setminus \mathcal{M}_*$, then either $\mu$ is not hyperbolic, or $\lambda_{d-1}(\mu)>0$. Then there exists a set $Z\subset M$ with $\mu(Z)=1$ so that for each $z \in Z$, there exists $v \in E^{cs}_z$ with $\lim_{n\to\infty} \tfrac 1n \log \|Dg^n_z(v)\| \geq 0$.

We claim that $z\in Z$ belongs to the set
\[
A^+=\{x : \text{there exists } K(x) \text{ so } \tfrac{1}{n}S^g_{n}\chi(x) < r \text{ for all } n>K(x)\}.
\]
To see this, suppose that $z \notin A^+$. Then there exists $n_k \to \infty$ with $\frac{1}{n_k}S^g_{n_k}\chi(z) \geq r$. By lemma \ref{centrestable}, this gives 
\[
 \|Dg^n_z(v)\| \leq \|Dg^{n_k}|_{E^{cs}(z)}\| < (\theta_r)^{n_k},
\]
and thus $
\lim_{n_k\to\infty} \tfrac 1{n_k} \log \|Dg^{n_k}_z(v)\| \leq \log \theta_r<0$,
which is a contradiction. Thus, $\mu(A^+) =1$. It follows that
\[
h_\mu(g) -\lambda^+(\mu) \leq h_\mu(g) + \int\phigeo\,d\mu 
\leq P(\CCC,\phigeo) \leq \Pbad(1).
\]
where the first inequality uses \eqref{eqn:phlambda}, the second uses Lemma \ref{keystepexpansivityestimate}, and the third uses Theorem \ref{coreestimateallscales}.
\end{proof}

It follows from Lemma \ref{lem:keySRBestimate}, \eqref{bad0geo}, and the Variational Principle that 
\begin{equation}\label{eqn:vp*}
P(\phigeo; g) = \sup \left\{h_\mu(g) + \int\phigeo\,d\mu \, :\,  \mu \in \mathcal{M}_*\right\}.
\end{equation}
Now, for every $\mu\in \mathcal{M}_*$, we have $\int\phigeo\,d\mu = -\lambda^+(\mu)$, and thus
\begin{equation}\label{eqn:free-energy}
h_\mu(g) + \int\phigeo\,d\mu = h_\mu(g) -\lambda^+(\mu)\leq 0
\end{equation}
by \eqref{eqn:nonpos}.  Together with \eqref{eqn:vp*} this gives $P(\phigeo;g) \leq 0$, and we conclude that $P(\phigeo;g)=0$.

It only remains to show that the unique equilibrium state $\mu_1$ is in fact an SRB measure for $g$, and there are no other SRB measures.  Since $\mu_1\in \mathcal{M}_*$,  it is hyperbolic, and since $P(\phigeo;g)=0$, \eqref{eqn:free-energy} gives $h_{\mu_1}(g) - \lambda^+(\mu_1)=0$, so $\mu_1$ is an SRB measure. To see there are no other SRB measures, we observe that if $\nu\neq \mu_1$ is ergodic, then $h_\nu(g) - \lambda^+(\nu) \leq h_\nu(g) + \int\phigeo\,d\nu < P(\phigeo;g)=0$ by the uniqueness of $\mu_1$ as an equilibrium measure. This completes the proof of Theorem \ref{main3}.

\section{Large Deviations and Multifractal Analysis} \label{s.ldp}

\subsection{Large deviations}
The upper level-$2$ large deviations principle is a statement which implies the following estimate on the rate of decay of the measure of points whose Birkhoff sums experience a `large deviation' from the expected value:
\begin{equation}\label{eqn:ldp-1}
\varlimsup_{n\to\infty} \frac 1n \log \mu \left \{x : \left |\frac 1n S_n\psi(x) - \int \psi \, d\mu \right|> \epsilon \right \} \leq -q(\eps),
\end{equation}
where $\eps>0$, $\psi\colon \TT^d\to\RR$ is any continuous function, and $q(\eps) \in[0, \infty] $ is a \emph{rate function}, whose precise value can be formulated precisely in terms of the free energies of a class of measures depending on $\eps$ and $\psi$.
That our equilibrium measures satisfy the upper level-$2$ large deviations principle follows from Theorem 5.5 of \cite{CT4}. That result says that an equilibrium state provided by Theorem \ref{t.generalM} has the upper level-$2$ large deviations principle, and is a consequence of a general large deviations result of Pfister and Sullivan \cite{PfS}, and a weak upper Gibbs property which is satisfied by our equilibrium states. 
The question of lower large deviations bounds for Ma\~n\'e  diffeomorphisms remains open.

\subsection{Multifractal analysis}\label{sec:multifractal}
Let $g$ be a $C^2$ diffeomorphism satisfying the hypotheses of Theorem \ref{main3}. For each $t\in [0,1]$, let $\mu_t$ be the unique equilibrium state for $t\phigeo$ given by Theorem \ref{main3}.  Then, $\mu_0$ is the unique MME and $\mu_1$ is the unique SRB measure.  
It follows from Lemma \ref{hexp-mane} that the entropy map $\mu\mapsto h_g(\mu)$ is upper semicontinuous, hence by Remark 4.3.4 of \cite{Ke98}, uniqueness of the equilibrium state implies that the function $t\mapsto P(t\phigeo)$ is differentiable on $(-\eps,1+\eps)$, with derivative $\chi^+(\mu_t)$, where we write $\chi^+(\mu_t) = \int \phigeo \,d\mu_t$ for the largest Lyapunov exponent of $\mu_t$. This has immediate consequences for multifractal analysis.  Given $\chi\in \RR$, let
\begin{align*}
K_\chi &= \{x\in \TT^d \mid \lim_{n\to\infty} \tfrac 1n \log \|Dg^n|_{E^u(x)}\| = \chi\} \\
&= \{x \in \TT^d \mid \lim_{n\to\infty} \tfrac 1n S_n\phigeo (x) = -\chi\}
\end{align*}
be the set of points whose largest Lyapunov exponent exists and is equal to $\chi$.  The following is a direct consequence of Theorem \ref{main3} and \cite[Corollary 2.9]{C14}.

\begin{thm}\label{thm:multifractal}
Let $g$ and $\mu_t$ be as in Theorem \ref{main3}.  Let $\chi_0 = \chi^+(\mu_0)$ and $\chi_1 = \chi^+(\mu_1)$.  Then for every $\chi\in [\chi_1,\chi_0]$, we have
\begin{align*}
\htop(K_\chi,g) &= \inf\{P(t\ph) + t\chi \mid t\in \RR \} \\
&= \sup \{h_\mu(g) \mid \mu\in \mathcal{M}_f(X), \chi^+(\mu) = \chi\} \\
&= \sup \{h_\mu(g) \mid \mu \in \mathcal{M}_f^e(K_\chi) \},
\end{align*}
where $\htop(K_\chi,g)$ is topological entropy defined as a dimension characteristic  in the sense of Bowen \cite{rB73}. The infimum in the first line is achieved for some $t\in [0,1]$, and for this $t$ we have $\htop(K_\chi,g) = h_{\mu_t}(g)$.
\end{thm}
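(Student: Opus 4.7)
The plan is to obtain Theorem \ref{thm:multifractal} as a direct application of the general multifractal formalism of \cite[Corollary 2.9]{C14}, after verifying the two hypotheses needed there: differentiability of the pressure function $t\mapsto P(t\phigeo)$ on an open interval containing $[0,1]$, and uniqueness of equilibrium states for this one-parameter family.

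First I would establish the differentiability. By Lemma \ref{hexp-mane}, $g$ has zero tail entropy at scale $6\eta$ and is therefore entropy-expansive, which implies that the entropy map $\mu\mapsto h_\mu(g)$ is upper semicontinuous on $\mathcal{M}(g)$. Theorem \ref{main3}\ref{ag} gives us uniqueness of the equilibrium state $\mu_t$ for the potential $t\phigeo$ for all $t$ in some open neighborhood $I\supset [0,1]$. By a general result of Keller (Remark 4.3.4 of \cite{Ke98}), upper semicontinuity of the entropy map together with uniqueness of the equilibrium state imply that the convex function $t\mapsto P(t\phigeo)$ is differentiable at each $t\in I$, with
\[
\tfrac{d}{dt} P(t\phigeo) = \int \phigeo\,d\mu_t = -\chi^+(\mu_t),
\]
where the last equality uses that the Oseledets splitting of any ergodic invariant measure subordinates to $E^{cs}\oplus E^u$, so that the unstable direction contributes the full sum of positive Lyapunov exponents (as noted in the proof of Theorem \ref{main3}).

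Next I would establish that $[\chi_1,\chi_0]$ is covered by $\{\chi^+(\mu_t):t\in[0,1]\}$. Differentiability of the convex function $P(t\phigeo)$ on $I$ forces its derivative to be continuous on $I$, so $t\mapsto \chi^+(\mu_t)$ is continuous on $[0,1]$ with endpoint values $\chi_0$ and $\chi_1$; by the intermediate value theorem, every $\chi\in[\chi_1,\chi_0]$ is realized as $\chi^+(\mu_t)$ for some $t\in[0,1]$.

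Finally, I would feed these ingredients into \cite[Corollary 2.9]{C14} with $\psi=\phigeo$. That corollary converts the differentiability and uniqueness information above into the three equalities
\[
\htop(K_\chi,g) = \inf_{t\in\RR}\bigl(P(t\phigeo) + t\chi\bigr) = \sup_{\chi^+(\mu)=\chi} h_\mu(g) = \sup_{\mu\in\mathcal{M}_f^e(K_\chi)} h_\mu(g),
\]
via a Legendre-transform argument, and asserts that the infimum is attained at the value of $t$ identified above, giving $\htop(K_\chi,g)=h_{\mu_t}(g)$. The only real obstacle I anticipate is bookkeeping: checking that the cited corollary's hypotheses match our setting precisely (in particular the regularity of the pressure function on all of $\RR$ versus just on $I$, which however does not affect the infimum since $P(t\phigeo)+t\chi$ is convex and its minimum is attained in $[0,1]$ by the derivative computation above).
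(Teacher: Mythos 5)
Your proposal is correct and follows essentially the same route as the paper: upper semicontinuity of the entropy map via Lemma \ref{hexp-mane}, uniqueness on a neighborhood of $[0,1]$ from Theorem \ref{main3}, differentiability of $t\mapsto P(t\phigeo)$ via Keller's Remark 4.3.4, and then a direct application of \cite[Corollary 2.9]{C14}. The extra details you supply (the intermediate value argument realizing each $\chi\in[\chi_1,\chi_0]$ and the observation that the convex minimum of $P(t\phigeo)+t\chi$ is attained in $[0,1]$) are consistent with, and slightly more explicit than, the paper's citation-level argument.
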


\subsection*{Acknowledgments} We thank the American Institute of Mathematics, where some of this work was completed as part of a SQuaRE.

\bibliographystyle{plain}
\bibliography{CFT-references-mane}

\end{document}